\newtheorem{theorem}{Theorem}[section]
\newtheorem{definition}[theorem]{Definition}
\newtheorem{lemma}[theorem]{Lemma}
\newtheorem{proposition}[theorem]{Proposition}
\newtheorem{remark}[theorem]{Remark}
\newtheorem{assumption}[theorem]{Assumption}
\newcommand{\N}{\mathbb{N}}
\newcommand{\R}{\mathbb{R}}
\newcommand{\C}{\mathbb{C}}
\newcommand{\X}{\mathcal{X}}
\newcommand{\Co}{\mathcal{C}}
\newcommand{\OO}{\mathcal{O}}
\newcommand{\A}{\mathcal{A}}
\renewcommand{\S}{\mathcal{S}}
\newcommand{\supp}{\text{supp}}
\newcommand{\ep}{\varepsilon}
\newcommand{\M}{\mathcal{M}}
\newcommand{\Ker}{\textnormal{Ker}}
\newcommand{\Rg}{\mathcal{R}}
\begin{document}
\title{Global stability in a competitive infection-age structured model}

\runningtitle{Global stability in a competitive infection-age structured model}

\author{Quentin Richard}\address{
Université de Bordeaux, IMB, UMR CNRS 5251, F-33400 Talence, France. Email: quentin.richard@math.cnrs.fr}

\runningauthors{Quentin Richard}

%
%
\begin{abstract} We study a competitive infection-age structured SI model between two diseases. The well-posedness of the system is  handled by using integrated semigroups theory, while the existence and the stability of disease-free or endemic equilibria are ensured,  depending on the basic reproduction number $R_0^x$ and $R_0^y$ of each strain. We then exhibit Lyapunov functionals to analyse the global stability and we prove that the disease-free equilibrium is globally asymptotically stable whenever $\max\{R_0^x, R_0^y\}\leq 1$. With respect to explicit basin of attraction, the competitive exclusion principle occurs in the case where $R_0^x\neq R_0^y$ and $\max\{R_0^x,R_0^y\}>1$, meaning that the strain with the largest $R_0$ persists and eliminates the other strain. In the limit case $R_0^x=R^0_y>1$, an infinite number of endemic equilibria exists and constitute a globally attractive set. \end{abstract}
%
%
\subjclass{35B35, 35B40, 47D62, 92D30}
\keywords{Lyapunov function, integrated semigroup, global stability, dynamical systems, structured population dynamics, competitive exclusion}
\maketitle

\section{Introduction}\label{Sec:Intro}

In \cite{Kermack27}, Kermack and McKendrick proposed the first ODE epidemic model. Since then, the literature on this topic is wide and such models are commonly used to predict the evolution of a disease and eventually prevent the apparition of epidemics. 
Incorporating another continuous variable such as the age since infection \cite{MagClusWebb2010, MagClusk2013, MartchevaLi2013,ThiemeCC93}, the infection-load \cite{Perasso2019, PerassoRazafison2014} or the time remaining before disease detection \cite{LarochePerasso2016}, the so-called structured epidemiological models are described by transport equations (we refer \textit{e.g.} to \cite{Brauer2012, Iannelli2017} for an introduction of such models) and sometimes by transport-diffusion equations \cite{CalsinaFarkas2012, CalsinaFarkas2016}. In the present paper, we consider the following infection-age structured SI model, that describes the competition between two diseases for a same susceptible population:
\begin{equation}
\label{Eq:Model}
\left\{
\begin{array}{rcl}
\dfrac{dS}{dt}(t)&=&\Lambda-\mu_S S(t)-S(t) \int_0^\infty\beta_x(a)x(t,a)da-S(t)\int_0^\infty \beta_y(a)y(t,a)da, \\
\dfrac{\partial x}{\partial t}(t,a)+\dfrac{\partial x}{\partial a}(t,a)&=&-\mu_x(a)x(t,a), \\
x(t,0)&=&S(t) \int_0^\infty \beta_x(a)x(t,a)da \\
\dfrac{\partial y}{\partial t}(t,a)+\dfrac{\partial y}{\partial a}(t,a)&=&-\mu_y(a)y(t,a), \\
y(t,0)&=&S(t) \int_0^\infty \beta_y(a)y(t,a)da \\
(S(0), x(0,\cdot), y(0,\cdot))&=&(S_0, x_0, y_0)\in \R_+\times L^1_+(0,\infty)\times L^1_+(0,\infty)
\end{array}
\right.
\end{equation}
for every $t\geq 0$ and $a\geq 0$. Such system can for example be used to describe competition between two strains of a same disease, as influenza \cite{DangMartcheva2016}, malaria \cite{DjidjouDucrot2013} or avian influenza \cite{MartchevaLi2013}. It can also be used in other contexts as competition between species for a same nutrient in a chemostat \cite{SmithThieme2013}, or competition between predators for a single ressource \cite{DucrotMadec2013}.

Here, $S(t), x(t,a)$ and $y(t,a)$ respectively denote the density of susceptible individuals at time $t$ and both infected populations of age $a$ and at time $t$. The parameter $\Lambda$ represents the recruitment flux into the susceptible class while $\mu_S, \mu_x$ and $\mu_y$ are the mortality rates of the three populations. Finally $\beta_x$ and $\beta_y$ describe the transmission rates of both infected populations $x$ and $y$. Let
$$\overline{\beta_x}=\sup(\supp(\beta_x)), \qquad \overline{\beta_y}=\sup(\supp(\beta_y))$$ ($\supp(\cdot)$ denoting the support of any function) be the maximal age of infectiousness of the corresponding disease. In the sequel, we will make the following assumption.

\begin{assumption}\label{Assump1}\mbox{}
\begin{enumerate}
\item The parameters $\Lambda, \mu_S>0$ are positive and the functions $\mu_x$, $\mu_y$, $\beta_x, \beta_y$ are in $L^\infty(0,\infty)$ with $\beta_x\not\equiv 0$ and $\beta_y\not\equiv 0$. Moreover there exists $\mu_0>0$ such that:
$$\min\{\mu_S, \mu_x(a), \mu_y(a)\}\geq \mu_0 \ \text{a.e. } a\geq 0.$$
\item There exist $\underline{\beta_x}\in [0,\overline{\beta_x})$ and $\underline{\beta_y}\in [0,\overline{\beta_y})$ such that
$$\beta_x(a)>0 \quad \text{a.e.} \quad a\in [\underline{\beta_x}, \overline{\beta_x}), \qquad \beta_y(a)>0 \quad   \text{a.e} \quad  a\in [\underline{\beta_y}, \overline{\beta_y}).$$
\end{enumerate}
\end{assumption}
Consequently to the latter assumption, the probabilities functions 
$$\pi_{x}:a\mapsto e^{-\int_0^a \mu_x(s)ds}, \quad \pi_{y}:a\mapsto e^{-\int_0^a \mu_y(s)ds}$$
describe the survival of the corresponding infected population. In the case $\beta_y\equiv 0$, the system \eqref{Eq:Model} becomes the following infection-age structured model with only one disease:
\begin{equation}
\label{Eq:Model_1pop}
\left\{
\begin{array}{rcl}
\dfrac{dS(t)}{dt}&=&\Lambda-\mu_S S(t)-S(t) \int_0^\infty\beta_x(a)x(t,a)da, \\
\dfrac{\partial x(t,a)}{\partial t}+\dfrac{\partial x(t,a)}{\partial a}&=&-\mu_x(a)x(t,a), \\
x(t,0)&=&S(t) \int_0^\infty \beta_x(a)x(t,a)da.
\end{array}
\right.
\end{equation}
This latter model \eqref{Eq:Model_1pop} has been investigated by Thieme and Castillo-Chavez \cite{ThiemeCC89, ThiemeCC93} with the study of the uniform persistence and local exponential asymptotic stability of the endemic equilibrium. Related epidemic models with delay can be found \textit{e.g.} in \cite{McCluskey2009, McCluskey2010}. Thereafter, Magal, McCluskey and Webb \cite{MagClusWebb2010} handled the global stability of the endemic equilibrium of \eqref{Eq:Model_1pop}, by proving the result below. First define the quantity
$$R_0=\dfrac{\Lambda \int_0^\infty \beta_x(a)e^{-\int_0^a \mu_x(s)ds}da}{\mu_S}$$
describing the number of secondary infections produced by a single infected patient. This latter threshold is commonly used in the litterature (see \textit{e.g.} \cite{Heesterbeek2002} or more recently \cite{Perasso2018} for an introduction). First appareared in a demographic context with the work of Dublin and Lotka \cite{DublinLotka25} (see more recently \cite[Chapter 9]{Inaba2017} for more references), it is now frequently used in epidemiology (see \textit{e.g.} \cite{Diekmann90, Diekmann00}) to state if a disease will persist or disappear. 
\begin{proposition} \label{Prop:Magal}
Suppose that Assumption \ref{Assump1} holds. If $R_0\leq 1$, then \eqref{Eq:Model_1pop} admits only the disease-free equilibrium $(\frac{\Lambda}{\mu_S},0)\in \R_+\times L^1_+(0,\infty)$, while if $R_0>1$ then there exists also a (unique) endemic equilibrium denoted by $E^*$. Moreover, if $R_0<1$ (resp. $R_0=1$), then $(\frac{\Lambda}{\mu_S},0)$ is globally asymptotically stable in $\R_+\times L^1_+(0,\infty)$ (resp. globally attractive). If $R_0 >1$ then the equilibrium $E^*$ is globally asymptotically stable in the set
$$\S:=\left\{(S_0,x_0)\in \R_+\times L^1_+(0,\infty): \int_0^{\overline{\beta_x}}x_0(s)ds>0\right\}$$
while the disease-free equilibrium $(\frac{\Lambda}{\mu_S},0)$ is globally attractive in $(\R_+\times L^1_+(0,\infty))\setminus \S$.
\end{proposition}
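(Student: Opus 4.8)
The plan is to first pin down the equilibria explicitly and then construct Volterra-type Lyapunov functionals adapted to each regime of $R_0$. Setting the time derivatives to zero in \eqref{Eq:Model_1pop} and integrating the transport equation along characteristics gives $x(a)=x(0)\pi_x(a)$, so the boundary condition forces either $x(0)=0$ (the disease-free equilibrium $(S^0,0)$ with $S^0=\Lambda/\mu_S$) or $S\int_0^\infty\beta_x(a)\pi_x(a)\,da=1$. Writing $K_x=\int_0^\infty\beta_x(a)\pi_x(a)\,da$, one has $R_0=S^0K_x$, the endemic susceptible level is $S^*=1/K_x=S^0/R_0$, and the $S$-equation yields $x^*(0)=\Lambda-\mu_S S^*$, which is positive exactly when $R_0>1$. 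This proves the existence/uniqueness dichotomy and identifies $E^*=(S^*,x^*)$.

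For the disease-free regime $R_0\le 1$, I would use the weight $\Psi_x(a)=\frac{1}{\pi_x(a)}\int_a^\infty\beta_x(\sigma)\pi_x(\sigma)\,d\sigma$, which solves $\Psi_x'=\mu_x\Psi_x-\beta_x$ with $\Psi_x(0)=K_x$, and consider $V(t)=S^0\,g(S(t)/S^0)+\frac{1}{K_x}\int_0^\infty\Psi_x(a)\,x(t,a)\,da$ with $g(u)=u-1-\ln u$. Differentiating along orbits, using $\Lambda=\mu_S S^0$, the boundary condition $x(t,0)=S(t)I(t)$ with $I(t)=\int_0^\infty\beta_x(a) x(t,a)\,da$, and integrating by parts in $a$, the cross terms cancel (which is precisely why the normalization $\Psi_x(0)/K_x=1$ is the right one) and one is left with $\dot V=-\mu_S(S-S^0)^2/S+S^0\frac{R_0-1}{R_0}\,I$, non-positive when $R_0\le 1$. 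LaSalle's invariance principle then gives convergence to $(S^0,0)$: when $R_0<1$ the set $\{\dot V=0\}$ reduces to the equilibrium (forcing $I=0$, hence $x\equiv 0$), yielding global asymptotic stability, whereas for $R_0=1$ the infected part is no longer controlled by $V$ and one only obtains global attractivity.

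For the endemic regime $R_0>1$ I would center the functional at $E^*$ and take $V(t)=S^*g(S/S^*)+\int_0^\infty\Psi_x(a)\,x^*(a)\,g\!\left(x(t,a)/x^*(a)\right)da$, with $\Psi_x$ normalized so that the boundary term at $a=0$ matches the incidence term from the $S$-equation. Using $g'(u)=1-1/u$, the equilibrium identities $\partial_a x^*=-\mu_x x^*$, $x^*(0)=S^*\int_0^\infty\beta_x x^*$ and $S^*K_x=1$, together with the standard device of rewriting $\partial_t\!\left[x^*g(x/x^*)\right]$ as a characteristic derivative plus a remainder and integrating by parts in $a$, I expect $\dot V$ to collapse into a sum of manifestly non-positive terms: the scalar contribution $\mu_S S^*\left(2-\frac{S}{S^*}-\frac{S^*}{S}\right)\le 0$ (AM--GM) together with incidence terms of the form $-\int_0^\infty\beta_x(a)x^*(a)\,g(\cdot)\,da\le 0$ produced by the $g$-telescoping. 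Global asymptotic stability of $E^*$ on $\S$ then follows by LaSalle, while on the complement the constraint $\int_0^{\overline{\beta_x}}x_0=0$ keeps the incidence identically zero, so $x$ decays and $S$ relaxes linearly to $S^0$, giving attractivity of the disease-free equilibrium there.

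The main obstacle is not the formal Lyapunov computation but making it rigorous in the $L^1$ state space, where \eqref{Eq:Model_1pop} is only governed by an integrated semigroup. Three points need care: (i) establishing asymptotic smoothness and relative compactness of the orbits, so that LaSalle's principle and the $\omega$-limit set analysis actually apply; (ii) justifying the differentiation of $V$ and the integration by parts in $a$ along non-smooth $L^1$ trajectories, which I would handle through the Volterra reformulation along characteristics; and (iii) for the endemic case, proving uniform persistence of the infected class on $\S$ --- via Hale--Waltman-type arguments --- so that $x(t,a)$ stays bounded away from zero on the infectious age range and $\ln(x/x^*)$, hence $V$, remains finite along the flow. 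I expect (iii) to be the genuinely delicate step.
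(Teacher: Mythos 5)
The paper never proves Proposition \ref{Prop:Magal}: it is imported verbatim from Magal--McCluskey--Webb \cite{MagClusWebb2010}, so there is no in-paper proof to compare against line by line. That said, your sketch reproduces exactly the strategy of that reference and of the paper's own two-strain generalisation (Sections 4--5): the equilibrium dichotomy via $x^*(a)=x^*(0)\pi_x(a)$ and $S^*r_x=1$ is the paper's computation for $E_1$; your weight $\Psi_x$ and the functionals $S^0g(S/S^0)+\frac{1}{K_x}\int\Psi_x x$ and $S^*g(S/S^*)+\int\Psi_x x^* g(x/x^*)$ are, up to the normalisation $\Psi_x(0)=1$ versus $\Psi_x(0)=K_x$, the paper's $L_0$ and $L_x$ restricted to one strain, and your computed $\dot V$ agrees with \eqref{Eq:L0_Termfinal} and \eqref{Eq:Lyap_TermFinalX}. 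Your list of technical obstacles (compactness of orbits, differentiation of $V$ along $L^1$ trajectories, a lower bound on $x/x^*$ over the $\omega$-limit set) is also the correct one; note that where you propose Hale--Waltman persistence, the present paper instead exploits the explicit Volterra form of the semiflow and full orbits through the compact invariant set $\omega(z)$ (Proposition \ref{Prop:Lyapunov_Defined}), which avoids the abstract persistence machinery.

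The one genuine gap is your treatment of \emph{stability} as opposed to attractivity. LaSalle's invariance principle yields only convergence of $\omega$-limit sets; in an infinite-dimensional state space this does not imply Lyapunov stability, a point the paper insists on in the introduction and in Section \ref{Sec:Lyap_Stab}. You correctly downgrade the conclusion to attractivity at $R_0=1$, but for $R_0<1$ you assert that LaSalle ``yields global asymptotic stability,'' and for $R_0>1$ that G.A.S.\ of $E^*$ on $\S$ ``follows by LaSalle''; in both cases you still owe a separate stability argument. The standard fix --- and the one consistent with the paper --- is linearised stability: show $s(A+DF_{E})<0$ via the characteristic equation (as in Proposition \ref{Prop:Stability}), which works for $E_0$ when $R_0<1$ and for $E^*$ when $R_0>1$, and combine local asymptotic stability with global attractivity. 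Alternatively, for $E_0$ one can run the sublevel-set argument of Lemmas \ref{Lemma:Stab1}--\ref{Lemma:Stab2}, which is precisely how the paper recovers stability in the degenerate case $R_0=1$ where linearisation fails. With that supplement your outline is sound.
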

The same result holds when interchanging the indexes $x$ and $y$. At this point we can note that in \cite{MagClusWebb2010}, the authors mentioned the global asymptotic stability of the disease-free equilibrium in the delicate case $R_0=1$. However, it seems that only the attractiveness is proved, by using Lyapunov functional. The same lack of proof seems to appear also \textit{e.g.} in \cite{MartchevaLi2013, Perasso2019}. The reason for this is twofold. Firstly, in infinite dimensional systems, the stability property is not ensured even if the attractiveness property is (see the Lasalle invariance principle \cite{MiSmiThi95}). Secondly, the principle of linearisation used to get the local asymptotic stability fails when $R_0=1$: indeed, we obtain eigenvalues with real part equals to zero. However, we will show in Section \ref{Sec:Lyap_Stab} how to overcome the stability in that case, by using some Lyapunov functional.

Recently, some papers considered structured epidemiological models with two groups of infections, or two paths of infection (see \textit{e.g.} \cite{ChengDongTak2018, DaiZou2015, MagClusk2013}) by adding some interaction between the two groups. The global stability of the equilibria and the persistence of the diseases are investigated, leading the to the existence of a $R_0$ threshold.

A very similar model to \eqref{Eq:Model} was analysed by Martcheva and Li \cite{MartchevaLi2013}, where they considered a SIR model with $n\geq 2$ different groups of infectious individuals, to see how the emergence of other diseases can affect the dynamics of the susceptible population. The analyse leads to the existence of $n$ thresholds, one for each disease. Then, using persistence results and proving existence of a global attractor as in \cite{MagClusWebb2010}, they enlighten a competitive exclusion principle, meaning that the disease with the biggest $R_0$ value will asymptotically survive, while the other strains will disappear. This fundamental result in ecology was first postulated by Gause \cite{Gause34}. We refer \textit{e.g.} to \cite{CaiMartcheva2013, DangMartcheva2016, DjidjouDucrot2013} for similar structured models where this principle occurs.

We first define the following thresholds 
$$R^x_0 :=\dfrac{\Lambda r_x}{\mu_S}, \qquad R^y_0 :=\dfrac{\Lambda r_y}{\mu_S}$$
where 
$$r_x:=\int_0^\infty \beta_x(a)\pi_{x}(a)da>0, \qquad r_y:=\int_0^\infty \beta_y(a)\pi_{y}(a)da>0.$$
The system \eqref{Eq:Model} always admits the disease free equilibrium
$$E_0:=\left(S^*_0, 0, 0\right)=\left(\dfrac{\Lambda}{\mu_S},0,0\right).$$
When $R_0^x>1$ (resp. $R_0^y>1$), we also have an endemic equilibrium given by
$$E_1:=(S^*_1, x^*_1, 0), \qquad (\textnormal{resp. } \ E_2:=(S^*_2, 0, y^*_2))$$
where
\begin{equation*}
\left\{
\begin{array}{rcl}
S^*_1&=&\dfrac{1}{r_x} \vspace{0.1cm} \\
x^*_1(a)&=& \dfrac{\mu_S(R^x_0 -1)}{r_x}\pi_{x}(a),
\end{array}
\right.
\begin{array}{rcl}
S^*_2&=&\dfrac{1}{r_y} \vspace{0.1cm}\\
y^*_2(a)&=& \dfrac{\mu_S(R^y_0 -1)}{r_y}\pi_{y}(a),
\end{array}
\end{equation*}
for every $a\geq 0$. Finally, when $R_0^x=R_0^y>1$, we have an infinite number of equilibria, given by
$$E^*_\alpha=(S^*, x^*_{\alpha}, y^*_{\alpha}), \quad \forall \alpha\in[1,2]$$
with
\begin{equation*}
\left\{
\begin{array}{rcl}
S^*&=&\dfrac{1}{r_x}=\dfrac{1}{r_y} \vspace{0.1cm}\\
x^*_{\alpha}(a)&=&\dfrac{\mu_S(R^x_0 -1)}{r_x}(2-\alpha) \pi_x(a) \vspace{0.1cm} \\
y^*_{\alpha}(a)&=& \dfrac{\mu_S(R^y_0 -1)}{r_y}(\alpha-1)\pi_{y}(a)
\end{array}
\right.
\end{equation*}
for every $a\geq 0$ and where we can note that $E^*_1=E_1$ and $E^*_2=E_2$.
In order to analyse the asymptotic behaviour of the solutions, we let $\X_+=\R_+\times L^1_+(0,\infty)\times L^1_+(0, \infty)$ and we define the sets
$$\S_x:=\{(S_0,x_0,y_0)\in \X_+: \int_0^{\overline{\beta_x}} x_0(s)ds >0\}, \qquad \partial \S_x=\X_+\setminus \S_x,$$
$$\S_y:=\{(S_0,x_0,y_0)\in \X_+: \int_0^{\overline{\beta_y}} y_0(s)ds >0\}, \qquad \partial \S_y=\X_+\setminus \S_y$$
containing initial infected populations that are in age to contaminate susceptible individuals, with the corresponding disease, now or in the future. The convergence results, obtained in the present paper, that depend on the thresholds $R_0^x, R_0^y$ and on the initial condition, are summed up in the following table: 
\begin{figure}[!hbtp]\label{Fig:Table}
\begin{center}
\begin{tabular}{|c|c|c|c|c|}
\hline 
 & $\partial \S_x\cap \partial \S_y$ & $\S_x\cap \partial \S_y$ & $\partial \S_x\cap \S_y$  & $\S_x\cap \S_y$ \\ 
\hline 
$\max\{R_0^x, R_0^y\}\leq 1$ & $E_0$ & $E_0$ & $E_0$ & $E_0$ \\ 
\hline 
$R_0^x>1\geq R_0^y$ & $E_0$ & $E_1$ & $E_0$ & $E_1$ \\ 
\hline 
$R_0^y>1\geq R_0^x$  & $E_0$ & $E_0$ & $E_2$ & $E_2$ \\ 
\hline 
$R_0^x>R_0^y>1$  & $E_0$ & $E_1$ & $E_2$ & $E_1$ \\ 
\hline 
$R_0^y>R_0^x>1$ & $E_0$ & $E_1$ & $E_2$ & $E_2$ \\ 
\hline 
$R_0^x=R_0^y>1$ & $E_0$ & $E_1$ & $E_2$ & $\{E^*_\alpha, \alpha \in[1,2]\}$ \\ 
\hline 
\end{tabular} 
\caption{Convergence of the solutions depending on $R_0^x, R_0^y$ and on the initial condition.}
\end{center}
\end{figure}

We notice that for each $k\in\{x,y\}$, when taking an initial condition in $\partial \S_k$  the solutions behave as in the case \eqref{Eq:Model_1pop}, that is to say either the initial condition is taken in $\partial \S_k$ and the solution goes to $E_0$, or it is taken in $\S_k$ and the fate of the solution depends on the threshold $R_0^k$.  Furthermore, we prove that for each value $R_0^x$ and $R_0^y$, the equilibria $E_0$, $E_1$ and $E_2$ are globally asymptotically stable in the corresponding basin of attraction, according to Figure \ref{Fig:Table}. The competitive exclusion principle is then verified and we also  handle \textit{e.g.} the global asymptotic stability of $E_0$ in $\X_+$ when $\max\{R_0^x, R_0^y\}=1$. However, the stability of the set of equilibria $\{E^*_\alpha, \alpha \in[1,2]\}$ is left open.

We first use the integrated semigroup theory, following \cite{MagClusWebb2010}, to get an appropriate framework in order to prove that \eqref{Eq:Model} is well-posed. It also allows us to linearise the system around each equilibrium, obtaining linear $C_0$-semigroups, then we use spectral theory to get the local stability of the equilibria (see \textit{e.g.} \cite{EngelNagel2000, Webb85, Webb87} for more results on this topic). In \cite{MagClusWebb2010}, the authors combine uniform persistence results due to Hale and Waltman \cite{HaleWaltman89}, with results obtained in \cite{MagalZhao2005}, to get the existence of a global attractor. While the same approach was used in \cite{MartchevaLi2013}, we follow \cite{Perasso2019} and we take advantages of an explicit formulation of the semiflow that enables us to exhibit the compactness of the orbits.

The method then used to perform the global analysis is based on the existence of a Lyapunov function (see \textit{e.g.} \cite{Hsu2005} for a survey of such functions in various ecological ODE and reaction-diffusion models). We therefore use the following key non-negative function:

\begin{equation}\label{Eq:Fct_g}
\begin{array}{rcl}
g:\R^*_+&\longrightarrow& \R \\
x&\longmapsto& x-\ln(x)-1
\end{array}
\end{equation}
that was first used by Goh \cite{Goh77} and Hsu \cite{Hsu78}. For the present model, we shall also use the following Volterra-type Lyapunov, incorporating the age-structure:

\begin{equation*}
\phi\longmapsto x^*\int_0^\infty \Psi(a)x^*(a)g\left(\dfrac{\phi(a)}{x^*(a)}\right)da
\end{equation*}
for any function $\phi>0$ a.e. with $x^*$ the equilibrium and $\phi$ some appropriate function. It was introduced in \cite{MagClusWebb2010}, and was later used \textit{e.g.} in  \cite{DjidjouDucrot2013, MagClusk2013, MartchevaLi2013, Perasso2019} for structured models. Note that similar functionals are used for delayed equations (see \textit{e.g.} \cite{PerassoRichard19b} and the references therein). The latter attractiveness combined with the stability then yield the global asymptotic stability of the corresponding equilibrium.

Note that the technique used in the present paper, contrarily to \cite{MartchevaLi2013}, allows us to study the case where the maximal reproduction number is not unique, that is when $R_0^x=R_0^y$. As written in Figure \ref{Fig:Table}, the set of equilibria $\{E^*_\alpha, \alpha\in[1,2]\}$ is proved to be globally attractive in $\S_x\cap \S_y$. Finally, following \cite{Gabriel2012} and \cite{PerassoRichard19b}, we handle the stability of the disease-free equilibrium $E_0$ in the case $\max\{R_0^x, R_0^y\}=1$, by making use of the Lyapunov functionals.

This article is structured as follows: in Section \ref{Sec:Well-Posed} we give the preliminaries results concerning existence, uniqueness and boundedness of the solutions. In Section \ref{Sec:Eq} we handle the stability of each equilibrium. Section \ref{Sec:Attractor} then deals with the existence of a compact attractor for the dynamical system and the identification of the basins of attraction. In Section \ref{Sec:Global} we investigate the global analysis of \eqref{Eq:Model}. We start by defining suitable Lyapunov functionals and proving their well-posedness. It allows to prove on one hand the global attractiveness of each equilibrium, by using a Lasalle invariance principle theorem, and on the other hand the stability of the disease free equilibrium when the principle of linearisation fails. Finally, we conclude about the global stability of each equilibrium. We end the paper with some numerical simulations in  Section \ref{Sec:Simu} to illustrate the above results.

\section{Well-posedness} \label{Sec:Well-Posed}

\subsection{Integrated semigroup formulation}

In this section, we handle the well-posedness of \eqref{Eq:Model}. To this end, we follow \cite{MagClusWebb2010} and we use integrated semigroups theory (see \textit{e.g.} \cite{MagalRuan2018} and the references therein for more details), whose approach was introduced by Thieme \cite{Thieme90}. First we consider the space
$$\hat{X}=\R\times L^1(0,\infty)$$
then we define the linear operators $\hat{A}_x:D(\hat{A}_k)\subset \hat{X}\to \hat{X}$ and $\hat{A}_y:D(\hat{A}_k)\subset \hat{X}\to \hat{X}$ by
$$\hat{A}_x\begin{pmatrix}
0 \\
\phi
\end{pmatrix}=\begin{pmatrix}
-\phi(0) \\
-\phi'-\mu_x \phi
\end{pmatrix}, \qquad \hat{A}_y\begin{pmatrix}
0 \\
\phi
\end{pmatrix}=\begin{pmatrix}
-\phi(0) \\
-\phi'-\mu_y \phi
\end{pmatrix}$$
with 
$$D\left(\hat{A}_x\right)=D\left(\hat{A}_y\right)=\{0\}\times W^{1,1}(0,\infty).$$
If $\lambda\in \C$ is such that $\Re(\lambda)>-\mu_0$, then $\lambda\in \rho(\hat{A}_x)\cap \rho(\hat{A}_y)$ (the resolvent sets of $\hat{A}_x$ and $\hat{A}_y$ respectively), and we have the following explicit formula for the resolvent of $\hat{A}_k$ (with $k\in\{x,y\}$):
\begin{equation}\label{Eq:Resolv}
\left(\lambda I-\hat{A}_k\right)^{-1}\begin{pmatrix}
c\\
\psi
\end{pmatrix}=\begin{pmatrix}
0 \\
\phi
\end{pmatrix}\Longleftrightarrow \phi(a)=c e^{-\int_0^a(\mu_k(s)+\lambda)ds}+\int_0^a e^{-\int_s^a (\mu_k(\xi)+\lambda)d\xi}\psi(s)ds.
\end{equation}
We can notice that \eqref{Eq:Model} is equivalent to
\begin{equation}\label{Eq:Model_0}
\left\{
\begin{array}{rcl}
S'(t)&=&\Lambda-\mu_S S(t)-S(t)\int_0^\infty \beta_x(a)x(t,a)da-S(t)\int_0^\infty \beta_y(a)y(t,a)da, \\
\dfrac{d}{dt}\begin{pmatrix}
0 \\
x(t,\cdot)
\end{pmatrix}&=&\hat{A}_x\begin{pmatrix}
0 \\
x(t,\cdot)
\end{pmatrix}+\begin{pmatrix}
S(t)\int_0^\infty \beta_x(a)x(t,a)da \\
0
\end{pmatrix}, \\
\dfrac{d}{dt}\begin{pmatrix}
0 \\
y(t,\cdot)
\end{pmatrix}&=&\hat{A}_y\begin{pmatrix}
0 \\
y(t,\cdot)
\end{pmatrix}+\begin{pmatrix}
S(t)\int_0^\infty \beta_y(a)y(t,a)da \\
0
\end{pmatrix}, \\
(S(0),x(0,\cdot), y(0,\cdot))&=&(S_0, x_0, y_0)\in \R_+\times L^1_+(0,\infty)\times L^1_+(0,\infty).
\end{array}
\right.
\end{equation}
Defining
$$\hat{x}(t)=\begin{pmatrix}
0 \\
x(t,\cdot)
\end{pmatrix}, \quad \hat{y}(t)=\begin{pmatrix}
0 \\
y(t,\cdot)
\end{pmatrix}$$
we can then rewrite \eqref{Eq:Model_0} as an ordinary differential equation coupled with two non-densely defined Cauchy problem:
\begin{equation*}
\left\{
\begin{array}{rcl}
\dfrac{dS}{dt}&=&-\mu_S S(t)+F_1(S(t),\hat{x}(t), \hat{y}(t)), \vspace{0.1cm} \\
\dfrac{d\hat{x}(t)}{dt}&=&\hat{A}_x \hat{x}(t)+F_2(S(t), \hat{x}(t), \hat{y}(t)),  \vspace{0.1 cm} \\
\dfrac{d\hat{y}(t)}{dt}&=&\hat{A}_y \hat{y}(t)+F_3(S(t), \hat{x}(t), \hat{y}(t)),
\end{array}
\right.
\end{equation*}
where
$$F_1\left(S, \begin{pmatrix}
0 \\
x
\end{pmatrix}, \begin{pmatrix}
0 \\
y
\end{pmatrix}\right)=\Lambda-S\int_0^\infty \beta_x(a)x(a)da-S\int_0^\infty \beta_y(a)y(a)da,$$
$$F_2\left(S, \begin{pmatrix}
0 \\
x
\end{pmatrix}, \begin{pmatrix}
0 \\
y
\end{pmatrix}\right)=\begin{pmatrix}
S\int_0^\infty \beta_x(a)x(a)da \\
0
\end{pmatrix}$$
and
$$F_3\left(S, \begin{pmatrix}
0 \\
x
\end{pmatrix}, \begin{pmatrix}
0 \\
y
\end{pmatrix}\right)=\begin{pmatrix}
S\int_0^\infty \beta_y(a)y(a)da \\
0
\end{pmatrix}.$$
Consider the sets
$$X=\R\times \left(\R\times L^1(0,\infty)\right)^2, \qquad X_+=\R_+\times \left(\R_+\times L^1_+(0,\infty)\right)^2$$
and define the linear operator $A:D(A)\subset X\to X$ by
$$A\begin{pmatrix}
S \\
\begin{pmatrix}
0 \\
x
\end{pmatrix} \\
\begin{pmatrix}
0 \\
y
\end{pmatrix}
\end{pmatrix}=\begin{pmatrix}
-\mu_S S \\
\hat{A}_x \begin{pmatrix}
0 \\
x
\end{pmatrix} \\
\hat{A}_y \begin{pmatrix}
0 \\
y
\end{pmatrix}
\end{pmatrix}
$$
with
$$D(A)=\R\times D(\hat{A}_x)\times D(\hat{A}_y).$$
We then see that
$$\overline{D(A)}=\R\times \left(\{0\}\times L^1(0,\infty)\right)^2$$
(the closure of $D(A)$), so that $D(A)$ is not dense in $X$. Now, define the non-linear function $F:\overline{D(A)}\to X$ by
$$F\begin{pmatrix}
S \\
\begin{pmatrix}
0 \\
x
\end{pmatrix} \\
\begin{pmatrix}
0 \\
y
\end{pmatrix}
\end{pmatrix}=\begin{pmatrix}
F_1\left(S,\begin{pmatrix}
0 \\ 
x
\end{pmatrix}, \begin{pmatrix}
0 \\
y
\end{pmatrix}\right) \\

F_2\left(S,\begin{pmatrix}
0 \\ 
x
\end{pmatrix}, \begin{pmatrix}
0 \\
y
\end{pmatrix}\right) \\
F_3\left(S,\begin{pmatrix}
0 \\ 
x
\end{pmatrix}, \begin{pmatrix}
0 \\
y
\end{pmatrix}\right)
\end{pmatrix}$$
then let
$$X_0:=\overline{D(A)}=\R\times \left(\{0\}\times L^1(0,\infty)\right)^2$$
and its positive cone
$$X_{0+}:=\overline{D(A)}\cap X_+=\R_+\times \left(\{0\}\times L^1_+(0,\infty)\right)^2.$$
We can thus rewrite \eqref{Eq:Model} as the following abstract Cauchy problem:
\begin{equation}\label{Eq:Cauchy_pb}
\left\{
\begin{array}{rcl}
\dfrac{du}{dt}(t)&=&Au(t)+F(u(t)), \forall t\geq 0\\
u(0)&=&u_0\in X_{0}
\end{array}
\right.
\end{equation}
where $u(t):=(S(t),x(t,\cdot), y(t,\cdot))$ and $u_0=(S_0, x_0, y_0)$. 

\subsection{Local existence and positivity}

Using the above semigroup formulation, we can state the classical following result:

\begin{proposition}\label{Prop:Existence}
Suppose that Assumption \ref{Assump1} holds. Then there exists a unique continuous semiflow \break $\{U(t)\}_{t\geq 0}$ on $X_{0+}$ such that for every $z\in X_{0+}$ there exist $t_{\max}\leq \infty$ 
and a continuous map $U\in \Co([0,t_{\max}), X_{0+})$ which is an integrated solution of \eqref{Eq:Cauchy_pb}, \textit{i.e.} such that
$$\int_0^t U(s)z ds\in D(A), \quad \forall t\in[0,t_{\max})$$
and 
$$U(t)z=z+A\int_0^t U(s)z ds+\int_0^t F(U(s)z)ds, \quad \forall t\in[0,t_{\max}).$$
\end{proposition}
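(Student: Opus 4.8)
The plan is to establish local existence, uniqueness, and positivity for the abstract Cauchy problem \eqref{Eq:Cauchy_pb} by invoking the standard theory of integrated semigroups for non-densely defined operators, as developed by Thieme and systematized in \cite{MagalRuan2018}. First I would verify that the linear operator $A$ is a Hille–Yosida operator on $X$. Since $A$ is block-diagonal with the scalar part $-\mu_S$ (trivially Hille–Yosida) and the two copies $\hat{A}_x, \hat{A}_y$, it suffices to treat each $\hat{A}_k$. The explicit resolvent formula \eqref{Eq:Resolv} shows that $(-\mu_0,\infty)\subset \rho(\hat{A}_k)$, and a direct estimate on \eqref{Eq:Resolv} in the $\hat{X}=\R\times L^1(0,\infty)$ norm yields, for $\Re(\lambda)>-\mu_0$, a bound of the form $\|(\lambda I-\hat{A}_k)^{-n}\|\leq M(\Re(\lambda)+\mu_0)^{-n}$; because of the lower bound $\mu_k(a)\geq \mu_0$ a.e., one obtains $M=1$ and the Hille–Yosida estimate holds with shift $-\mu_0$. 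Hence $A$ generates a non-degenerate integrated semigroup and $A_0$, the part of $A$ in $X_0=\overline{D(A)}$, generates a $C_0$-semigroup.

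Next I would check that the nonlinearity $F:X_0\to X$ is locally Lipschitz continuous. Each component is at most quadratic: $F_1$, $F_2$, $F_3$ all consist of the bilinear terms $S\int_0^\infty \beta_k(a)\phi(a)\,da$ together with the constant $\Lambda$. Using $\beta_k\in L^\infty(0,\infty)$ (Assumption \ref{Assump1}), the integral defines a bounded linear functional on $L^1(0,\infty)$, so each bilinear map $(S,\phi)\mapsto S\int_0^\infty\beta_k\phi$ is locally Lipschitz on bounded subsets of $X_0$. With $A$ Hille–Yosida and $F$ locally Lipschitz, the abstract results of \cite{MagalRuan2018} (local existence and uniqueness of integrated, i.e.\ mild, solutions for $u'=Au+F(u)$) apply directly and give a unique maximal integrated solution $U(\cdot)z\in\Co([0,t_{\max}),X_0)$ satisfying $\int_0^t U(s)z\,ds\in D(A)$ together with the variation-of-constants identity stated in the proposition. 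The semiflow property $U(t+s)=U(t)U(s)$ and continuity in $(t,z)$ follow from the uniqueness and the Lipschitz dependence.

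The remaining point, and the one I expect to require the most care, is positivity, namely that the semiflow maps $X_{0+}$ into itself so that $t_{\max}$ and $U$ are indeed defined on the positive cone. The difficulty is that positivity of the integrated semigroup generated by $A$ must be combined with a sub-tangentiality (quasi-positivity) condition on $F$ relative to the cone $X_{0+}$. I would argue as follows: the $C_0$-semigroup generated by each $(\hat{A}_k)_0$ is the (positive) age-shift semigroup weighted by the survival factor $e^{-\int_s^a\mu_k}$, and $e^{-\mu_S t}$ on the scalar part, so the semigroup leaves $X_{0+}$ invariant. For the nonlinearity, one checks the subtangential condition: for $z=(S,(0,x),(0,y))\in X_{0+}$, the perturbation $F(z)$ plus a suitable multiple of $z$ keeps the trajectory in the cone — concretely, the potentially negative contributions $-S\int\beta_x x-S\int\beta_y y$ appear only in the $S$-component and are dominated, on the boundary $S=0$, by vanishing, while the boundary-influx terms $S\int\beta_k\cdot$ entering through the $\R$-slot of $\hat{x},\hat{y}$ are non-negative. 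Invoking the positivity criterion from \cite{MagalRuan2018} for Hille–Yosida operators with quasi-positive nonlinearities then yields $U(t)X_{0+}\subseteq X_{0+}$ for all $t\in[0,t_{\max})$, completing the proof.
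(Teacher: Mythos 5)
Your proposal is correct and follows essentially the same route as the paper: the explicit resolvent formula \eqref{Eq:Resolv} gives the Hille--Yosida estimate for $A$, the (locally) Lipschitz character of the quadratic nonlinearity $F$ yields local existence and uniqueness of the integrated solution via the abstract theory (the paper cites \cite[Proposition 3.2]{Magal2001} or \cite[Proposition 4.3.3]{CazenaveHaraux90} rather than \cite{MagalRuan2018}, but this is immaterial), and positivity is obtained exactly as you describe, by combining resolvent positivity of $A$ with the quasi-positivity condition $F(z)+cz\in X_+$ on bounded subsets of the cone.
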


\begin{proof}
On one hand, the explicit expression \eqref{Eq:Resolv} of the resolvent of $\hat{A}_k$ for each $k\in\{x,y\}$ ensures us that
$$\left\|(\lambda -A)^{-n}\right\|\leq \dfrac{c}{(\lambda+\mu_0)^n}$$
for some $c>0$ and for every $n\geq 1$, so that $A$ is a Hille-Yosida operator with $(-\mu_0,\infty)\subset \rho(A)$. On the other hand, we can check that the non-linear function $F$ is Lipschitz continuous. Using \cite[Proposition 3.2]{Magal2001} or \cite[Proposition 4.3.3, p. 56]{CazenaveHaraux90} we get the local existence. Now, from \eqref{Eq:Resolv} we deduce that $A$ is resolvent positive, that is to say
$$(\lambda I-A)^{-1}X_+\subset X_+, \quad \forall \lambda\in \rho(A).$$
Moreover, the expression of the non-linearity $F$ implies that for every $r>0$, there exists $c\geq 0$ such that
$$F(z)+c z\in X_+, \quad \forall z\in B(0,r)\cap X_{0+}$$
where $B(0,r)$ denotes the ball of $X$, centred in $0\in X$ and with radius $r$. Finally, using \cite[Proposition 3.6]{Magal2001}, we get the non-negativity of the solution.
\end{proof}

\subsection{Boundedness and global existence}

Let the Banach space
$$\X:=\R\times L^1(0,\infty)\times L^1(0,\infty)$$
endowed with the usual norm and denote by $\X_+$ its positive cone. We are ready to give the main result of this section:

\begin{theorem}\label{Thm:Global}
Suppose that Assumption \ref{Assump1} holds. Then for every $z=(S_0, x_0, y_0)\in \X_+$, there exists a unique mild solution $(S, x, y)\in \Co(\R_+, \X_+)$, that induces a continuous semiflow via:
$$\Phi:\R_+\times \X_+\ni (t,z)\longmapsto \Phi_t(z)=(S(t),x(t,\cdot),y(t,\cdot)).$$
Moreover, the semiflow $\Phi_t=(\Phi_t^S, \Phi_t^x, \Phi_t^y)$ rewrites using Duhamel formulation as follows:
$$\Phi_t(z)=(0,\Phi_t^{x,1}(z),\Phi_t^{y,1}(z))+(\Phi_t^S(z),\Phi_t^{x,2}(z),\Phi_t^{y,2}(z))$$
with $\Phi_t^S(z)>0$ for every $t>0$ and every $z\in \X_+$. Finally, $\Phi_t^x$ and $\Phi_t^y$ are given by:
\begin{equation}\label{Eq:Phi_x1}
\Phi_t^{x,1}(z)(a)=x_0(a-t)e^{-\int_0^t \mu_x(s)ds}\chi_{[t,\infty)}(a),
\end{equation}
\begin{equation}\label{Eq:Phi_x2}
\Phi_t^{x,2}(z)(a)=\Phi_{t-a}^S\\
(z)\int_0^\infty \beta_x(s)\Phi_{t-a}^x(z)(s)ds e^{-\int_0^a \mu_x(u)du} \chi_{[0,t]}(a),
\end{equation}
\begin{equation}\label{Eq:Phi_y1}
\Phi_t^{y,1}(z)(a)=y_0(a-t)e^{-\int_0^t \mu_y(s)ds}\chi_{[t,\infty)}(a),
\end{equation}
\begin{equation}\label{Eq:Phi_y2}
\Phi_t^{y,2}(z)(a)=\Phi_{t-a}^S(z)\int_0^\infty \beta_y(s)\Phi_{t-a}^y(z)(s)ds e^{-\int_0^a \mu_y(u)du} \chi_{[0,t]}(a)
\end{equation}
where $\chi$ denotes the characteristic function. Moreover, there exists a constant $k$ (independent of $z$), such that 
$$\limsup_{t\to \infty(z)} S(t)\leq k, \qquad \limsup_{t\to \infty(z)} x(t,a)\leq k e^{-\mu_0 a}, \qquad \limsup_{t\to \infty(z)} y(t,a)\leq k e^{-\mu_0 a}.$$
\end{theorem}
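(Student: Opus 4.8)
The plan is to upgrade the local integrated solution furnished by Proposition~\ref{Prop:Existence} to a global one by proving an a priori bound on its $\X_+$-norm, which for a nonnegative solution coincides with the total population $N(t):=S(t)+\int_0^\infty x(t,a)\,da+\int_0^\infty y(t,a)\,da$. First I would track the evolution of $N$: integrating the two transport equations in $a$ over $(0,\infty)$, using the boundary conditions $x(t,0)=S(t)\int_0^\infty\beta_x x$, $y(t,0)=S(t)\int_0^\infty\beta_y y$ and the vanishing of $x(t,\cdot),y(t,\cdot)$ at $a=\infty$, then adding the $S$-equation, the key observation is that the two bilinear incidence terms $\pm S\int\beta_k k$ cancel exactly, leaving
\begin{equation*}
\frac{dN}{dt}(t)=\Lambda-\mu_S S(t)-\int_0^\infty\mu_x(a)x(t,a)\,da-\int_0^\infty\mu_y(a)y(t,a)\,da\leq \Lambda-\mu_0 N(t),
\end{equation*}
by Assumption~\ref{Assump1}. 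A scalar comparison argument then gives $N(t)\leq N(0)e^{-\mu_0 t}+\frac{\Lambda}{\mu_0}(1-e^{-\mu_0 t})$, so the norm stays bounded on every finite interval and $\limsup_{t\to\infty}N(t)\leq \Lambda/\mu_0$.

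Since the $\X_+$-norm of the (nonnegative) solution cannot blow up in finite time, the standard continuation criterion for integrated solutions (see \cite{MagalRuan2018}) forces $t_{\max}=\infty$, yielding the global mild solution $(S,x,y)\in\Co(\R_+,\X_+)$ and the semiflow $\Phi$. The Duhamel decomposition is then obtained from the abstract mild-solution formula associated with $A$ and the $C_0$-semigroup generated by the part of $A$ in $X_0$: the homogeneous part transports the initial datum along the characteristic lines $a-t=\mathrm{const}$ and produces the $\chi_{[t,\infty)}$-supported terms \eqref{Eq:Phi_x1} and \eqref{Eq:Phi_y1}, whereas the term built from $F$, carrying the boundary inflow $x(t,0),y(t,0)$, produces the $\chi_{[0,t]}$-supported terms \eqref{Eq:Phi_x2} and \eqref{Eq:Phi_y2}, in which the cohort infected at time $t-a$ is attenuated by the survival probability $\pi_k$.

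For the strict positivity $\Phi_t^S(z)>0$, I would read the $S$-equation as a scalar linear ODE with bounded nonnegative coefficient $M(t):=\int_0^\infty\beta_x x+\int_0^\infty\beta_y y$, which is finite because $\beta_k\in L^\infty$ and $N$ is bounded:
\begin{equation*}
\frac{dS}{dt}(t)=\Lambda-\big(\mu_S+M(t)\big)S(t).
\end{equation*}
The variation-of-constants formula, combined with $\Lambda>0$, gives $S(t)\geq e^{-\int_0^t(\mu_S+M(s))\,ds}\big(S_0+\Lambda\int_0^t e^{\int_0^s(\mu_S+M(\tau))\,d\tau}\,ds\big)>0$ for every $t>0$, even when $S_0=0$.

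The asymptotic bounds then follow by feeding $\limsup_{t}N(t)\leq\Lambda/\mu_0$ back into the explicit formulas. Indeed $\limsup_{t}S(t)\leq\Lambda/\mu_0$, and for fixed $a$ the term \eqref{Eq:Phi_x1} vanishes once $t>a$, so only \eqref{Eq:Phi_x2} contributes to the limsup; bounding there $\Phi_{t-a}^S(z)$ via $\limsup_s\Phi_s^S\le\Lambda/\mu_0$, the force of infection $\int_0^\infty\beta_x\Phi_{t-a}^x(z)$ by $\|\beta_x\|_\infty N(t-a)$ with $\limsup_s N(s)\le\Lambda/\mu_0$, and $e^{-\int_0^a\mu_x}\le e^{-\mu_0 a}$, one obtains $\limsup_{t}x(t,a)\le k\,e^{-\mu_0 a}$ for a $z$-independent constant $k$ depending only on $\Lambda,\mu_0,\|\beta_x\|_\infty,\|\beta_y\|_\infty$; the bound on $y$ is identical. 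I expect the main obstacle to be the rigorous justification of the $N'(t)$ computation at the level of mild (integrated) solutions — in particular the legitimacy of the integration in $a$ and of the boundary terms — which is cleanest to carry out directly from the Volterra formulas \eqref{Eq:Phi_x2} and \eqref{Eq:Phi_y2} rather than from the formal transport equation. Establishing those formulas, and with them the a priori bound that closes the global-existence argument, is the technical heart, after which the positivity and the final $\limsup$ estimates are routine.
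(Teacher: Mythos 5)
Your proposal is correct and follows essentially the same route as the paper: an a priori $L^1$ bound obtained by integrating the transport equations in $a$ and applying a Gronwall/comparison argument, combined with the continuation (blow-up) criterion for integrated solutions to conclude $t_{\max}=\infty$ and the asymptotic uniform bound. The only differences are cosmetic or supplementary: you sum all three components at once (exact cancellation of the incidence terms) where the paper bounds $S+\int_0^\infty x$ and $S+\int_0^\infty y$ pairwise using the sign of the remaining cross term, and you additionally spell out the Duhamel formulas, the strict positivity of $\Phi_t^S$, and the pointwise $\limsup$ estimates, which the paper's written proof asserts but leaves implicit.
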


\begin{proof}
Let $z:=(S_0, x_0, y_0)\in \X_+$ and $(S,x,y)\in \Co([0,t_{\max}), \X_+)$ be the solution of \eqref{Eq:Model}. Suppose by contradiction that $t_{\max}<\infty$. It would imply by \cite[Theorem 3.3]{Magal2001} or \cite[Theorem 4.3.4, p. 57]{CazenaveHaraux90} that
\begin{equation}\label{Eq:Contrad_Expl}\lim_{t\to t_{\max}}(S(t)+\|x(t,\cdot)\|_{L^1}+\|y(t,\cdot)\|_{L^1})=\infty.
\end{equation}
From \eqref{Eq:Model} we see that
$$S'(t)\leq \Lambda -\mu_S S(t)$$
for any $t\geq 0$, which implies that
\begin{equation}\label{Eq:Maj_1}
\limsup_{t\to t_{\max}}S(t)\leq \dfrac{\Lambda}{\mu_S}+\left(S_0-\dfrac{\Lambda}{\mu_S}\right)e^{-\mu_0 t_{\max}}
\end{equation}
by using a Gronwall argument. Now, an integration of \eqref{Eq:Model} leads to
$$\dfrac{d \int_0^\infty x(t,a)da}{dt}=S(t)\int_0^\infty \beta_x(a)x(t,a)da-\int_0^\infty \mu_x(a)x(t,a)da$$
since $x(t,\cdot)\in W^{1,1}(0,\infty)$ and $x(t,a)\xrightarrow[a\to \infty]{}0$ for each $t\in[0,t_{\max}(z))$. Thus we have 
$$S'(t)+\dfrac{d \int_0^\infty x(t,a)da}{dt}\leq \Lambda -\mu_0 \left(S(t)+\int_0^\infty x(t,a)da\right)$$
and then 
\begin{equation}\label{Eq:Maj_2}
\limsup_{t\to t_{\max}(z)} \int_0^\infty x(t,a)da\leq \dfrac{\Lambda}{\mu_0}+\left(S_0+\|x_0\|_{L^1(0,\infty)}-\dfrac{\Lambda}{\mu_0}\right)e^{-\mu_0 t_{\max}}.
\end{equation}
Similarly, we get
\begin{equation}\label{Eq:Maj_3}
\limsup_{t\to t_{\max}(z)} \int_0^\infty y(t,a)da\leq \dfrac{\Lambda}{\mu_0}+\left(S_0+\|y_0\|_{L^1(0,\infty)}-\dfrac{\Lambda}{\mu_0}\right)e^{-\mu_0 t_{\max}}.
\end{equation}
Consequently, we get a contradiction with \eqref{Eq:Cauchy_pb} and then $t_{\max}=\infty$. Finally, from \eqref{Eq:Maj_1}-\eqref{Eq:Maj_2}-\eqref{Eq:Maj_3}, we deduce that the solutions are asymptotically uniformly bounded, since the bound do not depend on the initial condition.
\end{proof}

\section{Equilibria and their stability} \label{Sec:Eq}

As mentioned in the introduction, we have the following result concerning the existence of equilibria:
\begin{proposition}\label{Prop:Eq1}
Suppose that Assumption \ref{Assump1} holds. Then there hold that
\begin{enumerate}
\item if $\max\{R^x_0,R^y_0\} \leq 1$ then there is only one equilibrium that is $E_0$;
\item if $R^x_0 >1\geq R^y_0$ then there are two equilibria: $E_0$ and $E_1$;
\item if $R^x_0 \leq 1<R^y_0$ then there are two equilibria: $E_0$ and $E_2$;
\item if $R^x_0 >1$, $R^y_0 >1$ and $R^x_0 \neq R^y_0 $ then there are three equilibria that are $E_0$, $E_1$ and $E_2$;
\item if $R^x_0 =R^y_0 >1$ then there are an infinite number of equilibria given by $E_0$ and $\{E^*_{\alpha}, \ \alpha\in [1,2]\}$.
\end{enumerate} 
\end{proposition}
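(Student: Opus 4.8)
The plan is to characterize the equilibria by setting the time derivatives in \eqref{Eq:Model} to zero and solving the resulting stationary system directly. Any equilibrium $(S,x(\cdot),y(\cdot))$ must satisfy $x'(a)=-\mu_x(a)x(a)$ and $y'(a)=-\mu_y(a)y(a)$, whose integration yields $x(a)=x(0)\pi_x(a)$ and $y(a)=y(0)\pi_y(a)$. I would then feed these back into the boundary conditions. Using the definitions of $r_x$ and $r_y$, the identity $\int_0^\infty \beta_x(a)x(a)\,da = x(0)r_x$ turns the condition $x(0)=S\int_0^\infty \beta_x(a)x(a)\,da$ into $x(0)(1-Sr_x)=0$, and symmetrically $y(0)(1-Sr_y)=0$. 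This produces a dichotomy for each strain: either the strain is absent ($x(0)=0$, resp. $y(0)=0$) or the susceptible level is pinned at $S=1/r_x$ (resp. $S=1/r_y$).

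The argument then splits into four cases according to these two dichotomies. When $x(0)=y(0)=0$, the $S$-equation reduces to $\Lambda=\mu_S S$, giving the disease-free equilibrium $E_0$, which therefore always exists. When exactly one strain is present, say $x(0)\neq 0$ and $y(0)=0$, I would set $S=1/r_x$ and substitute into the first equation of \eqref{Eq:Model}, obtaining $0=\Lambda-\mu_S/r_x-x(0)$, hence $x(0)=\mu_S(R_0^x-1)/r_x$. Positivity of $x(0)$ then forces $R_0^x>1$, and one recovers exactly $E_1$; the symmetric computation yields $E_2$ under $R_0^y>1$. This settles items (1)--(3) and the single-strain equilibria in (4).

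The delicate case is coexistence, $x(0)\neq 0$ and $y(0)\neq 0$, where both pinning conditions must hold at once: $1/r_x=S=1/r_y$, i.e. $r_x=r_y$. Since $\Lambda,\mu_S>0$ are fixed, this is equivalent to $R_0^x=R_0^y$. Thus if $R_0^x\neq R_0^y$ there is no coexistence equilibrium, which completes item (4). If instead $r_x=r_y$, the first equation of \eqref{Eq:Model} collapses to the single scalar constraint $x(0)+y(0)=\mu_S(R_0^x-1)/r_x$ on the two free nonnegative boundary values, which (when $R_0^x>1$) admits a one-parameter continuum of solutions. Writing this continuum as convex combinations parametrized by $\alpha\in[1,2]$, so that $x(0)$ carries the fraction $(2-\alpha)$ and $y(0)$ the fraction $(\alpha-1)$ of the total, and checking that $\alpha=1$ recovers $E_1$ while $\alpha=2$ recovers $E_2$, yields precisely the family $\{E^*_\alpha,\ \alpha\in[1,2]\}$ of item (5).

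The main point to handle carefully is this coexistence case: the key observation is that the two \emph{independent} boundary conditions force the knife-edge identity $r_x=r_y$, and that in this degenerate situation the stationary $S$-equation imposes only one relation between $x(0)$ and $y(0)$, so a whole line segment of equilibria appears rather than an isolated point. Everything else is routine once the factorized conditions $x(0)(1-Sr_x)=0$ and $y(0)(1-Sr_y)=0$ are extracted.
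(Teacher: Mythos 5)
Your proof is correct, and it is essentially the argument the paper relies on: Proposition \ref{Prop:Eq1} is stated without an explicit proof, the equilibria $E_0$, $E_1$, $E_2$ and $E^*_\alpha$ being simply listed in Section \ref{Sec:Intro}, and your direct computation (integrating the stationary transport equations, extracting the factorized conditions $x(0)(1-Sr_x)=0$, $y(0)(1-Sr_y)=0$, and running the four-way case analysis) is exactly the routine verification that produces those formulas, including the key observation that coexistence forces $r_x=r_y$ and then leaves a one-parameter segment of equilibria.
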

We start by reminding the following classical definition
\begin{definition}
Let $S\subset \X_+$ be a subset of $\X_+$ and $E$ be an equilibrium of \eqref{Eq:Model}. Then we say that $E$ is
\begin{itemize}
\item \textbf{(Lyapunov) stable} in $S$ if for every $\varepsilon>0,$ there exists $\eta>0$ such that for every $z\in S$:
$$\|z-E\|_\X \leq \eta \quad \Rightarrow \quad \|\Phi_t(z)-E\|_\X\leq \varepsilon, \quad \forall t\geq 0;$$
\item \textbf{unstable} if $E$ is not stable in $\X_+$;
\item \textbf{locally attractive in $S$} if there exists $\eta>0$ such that for every $z\in S$ satisfying $\|z-E\|_{\X}\leq \eta$, then
\begin{equation}\label{Eq:LocAttrac}\lim_{t\to \infty}\|\Phi_t(z)-E\|_{\X}=0,
\end{equation}
\item \textbf{locally asymptotically stable (L.A.S.) in $S$} if $E$ is stable and locally attractive in $S$;
\item \textbf{globally attractive in $S$} if for every $z\in S$, \eqref{Eq:LocAttrac} is satisfied;
\item \textbf{globally asymptotically stable (G.A.S.) in $S$} if $E^*$ is stable and globally attractive in $S$.
\end{itemize}
\end{definition}
In the following, for notational simplicity, we will not specify the subset $S$ if the latter is the whole positive cone, \textit{i.e.} when $S=\X_+$. We now handle the stability of the equilibria formerly defined.
\begin{proposition}\label{Prop:Stability}
Suppose that Assumption \ref{Assump1} holds. Then the following hold:
\begin{enumerate}
\item if $\max\{R^x_0 , R^y_0 \}<1$ (resp. $>1$) then $E_0$ is L.A.S. (resp. unstable); 
\item if $R^x_0 >\max\{1,R^y_0 \}$ then $E_1$ is L.A.S. If $R_0^y>R_0^x>1$, then $E_1$ is unstable.
\item if $R^y_0 >\max\{1, R^x_0 \}$ then $E_2$ is L.A.S. If $R_0^x>R_0^y>1$, then $E_2$ is unstable;
\item if $R^x_0 =R^y_0 >1$, then for each $\alpha \in [1,2]$, the equilibrium $E^*_\alpha$ is not L.A.S. in $\X_+$.
\end{enumerate}
\end{proposition}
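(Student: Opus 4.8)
The plan is to prove each item by the principle of linearised stability, reducing the analysis to a low-dimensional characteristic equation. Linearising \eqref{Eq:Model} around a generic equilibrium $(S^*,x^*,y^*)$ and seeking perturbations proportional to $e^{\lambda t}$ with profiles $(\bar S,\bar x(\cdot),\bar y(\cdot))$, the two transport equations integrate to $\bar x(a)=\bar x(0)e^{-\lambda a}\pi_x(a)$ and $\bar y(a)=\bar y(0)e^{-\lambda a}\pi_y(a)$. Introducing the local notation
\[
K_k(\lambda)=\int_0^\infty \beta_k(a)e^{-\lambda a}\pi_k(a)\,da,\qquad k\in\{x,y\},
\]
so that $K_x(0)=r_x$, $K_y(0)=r_y$, and $b_x=\int_0^\infty\beta_x x^*$, $b_y=\int_0^\infty\beta_y y^*$, the boundary conditions together with the $S$-equation yield the linear system
\begin{align*}
(\lambda+\mu_S+b_x+b_y)\bar S+S^*K_x(\lambda)\bar x(0)+S^*K_y(\lambda)\bar y(0)&=0,\\
b_x\bar S+(S^*K_x(\lambda)-1)\bar x(0)&=0,\\
b_y\bar S+(S^*K_y(\lambda)-1)\bar y(0)&=0.
\end{align*}
The eigenvalues are the zeros of the determinant $\Delta(\lambda)$ of this $3\times3$ system. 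Before exploiting $\Delta$, I would first record that, thanks to the decomposition of the semiflow in Theorem~\ref{Thm:Global} (the $\Phi^{\cdot,1}$ part decays like $e^{-\mu_0 t}$ while the renewal part $\Phi^{\cdot,2}$ is regularising) and the integrated-semigroup/spectral framework of \cite{MagalRuan2018, EngelNagel2000, Webb85}, the linearised semigroup has essential growth bound $\le-\mu_0<0$; hence its growth bound coincides with the spectral bound and (in)stability is governed by the location of the roots of $\Delta$. This equality of bounds, rather than any individual computation, is the main technical point to secure.

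The two elementary facts driving every sign computation are that each $K_k$ is real and strictly decreasing on $\R$ and satisfies $|K_k(\lambda)|\le K_k(\Re\lambda)$, so that any equation $c\,K_k(\lambda)=1$ with $c>0$ has a dominant root that is real and whose sign is that of $c\,K_k(0)-1$. For $E_0$ one has $b_x=b_y=0$ and $S^*=\Lambda/\mu_S$, whence $\Delta(\lambda)=(\lambda+\mu_S)\,(S^*K_x(\lambda)-1)(S^*K_y(\lambda)-1)$. The spectrum is thus $\{-\mu_S\}$ together with the roots of $\frac{\Lambda}{\mu_S}K_x(\lambda)=1$ and $\frac{\Lambda}{\mu_S}K_y(\lambda)=1$, whose dominant real roots are negative, zero or positive according as $R_0^x$ (resp. $R_0^y$) is $<1$, $=1$ or $>1$. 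Item~(1) follows: if $\max\{R_0^x,R_0^y\}<1$ the whole spectrum lies in $\{\Re\lambda<0\}$ and $E_0$ is L.A.S., whereas if $\max\{R_0^x,R_0^y\}>1$ a positive eigenvalue exists and $E_0$ is unstable.

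For $E_1$ one has $b_x=\mu_S(R_0^x-1)$, $b_y=0$, $S^*=1/r_x$, and $\Delta$ factors as
\[
\Delta(\lambda)=\bigl(S^*K_y(\lambda)-1\bigr)\,\det\begin{pmatrix}\lambda+\mu_S R_0^x & S^*K_x(\lambda)\\[2pt] b_x & S^*K_x(\lambda)-1\end{pmatrix}.
\]
The first factor gives the ``$y$-modes'', governed by $\tfrac{1}{r_x}K_y(\lambda)=1$, whose value at $\lambda=0$ is $r_y/r_x=R_0^y/R_0^x$; its dominant root is therefore negative when $R_0^x>R_0^y$ and positive when $R_0^y>R_0^x$. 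The $2\times2$ block simplifies (using $\mu_S+b_x=\mu_S R_0^x$ and $\mu_S R_0^x-b_x=\mu_S$) to the single-strain endemic characteristic equation $\frac{K_x(\lambda)}{r_x}=\frac{\lambda+\mu_S R_0^x}{\lambda+\mu_S}$; for $\Re\lambda\ge0$ the left side has modulus $\le K_x(0)/r_x=1$, while, since $R_0^x>1$, a direct comparison gives $|\lambda+\mu_S R_0^x|>|\lambda+\mu_S|$, so the right side has modulus $>1$ — no root lies in the closed right half-plane. Combining, $R_0^x>\max\{1,R_0^y\}$ forces all eigenvalues into $\{\Re\lambda<0\}$ (L.A.S.), while $R_0^y>R_0^x>1$ produces a positive $y$-root (unstable), which is item~(2); item~(3) is identical after interchanging $x$ and $y$.

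For item~(4), where $R_0^x=R_0^y>1$ (hence $r_x=r_y$), linearisation is inconclusive: differentiating the identity satisfied by the equilibrium family $\alpha\mapsto E_\alpha^*$ exhibits $\tfrac{d}{d\alpha}E_\alpha^*=(0,-c\,\pi_x,c\,\pi_y)$ with $c=\mu_S(R_0^x-1)/r_x\ne0$ as a genuine eigenvector of the linearisation for the eigenvalue $\lambda=0$, so $\Delta(0)=0$ and no spectral gap is available. I would therefore argue directly from the definition: for every $\eta>0$ choose $\beta\in[1,2]$, $\beta\ne\alpha$, with $\|E_\beta^*-E_\alpha^*\|_\X\le\eta$ (possible since $\beta\mapsto E_\beta^*$ is continuous and nonconstant). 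As $E_\beta^*$ is itself an equilibrium, $\Phi_t(E_\beta^*)=E_\beta^*$ for all $t$, so $\|\Phi_t(E_\beta^*)-E_\alpha^*\|_\X=\|E_\beta^*-E_\alpha^*\|_\X>0$ does not tend to $0$. Hence \eqref{Eq:LocAttrac} fails for arbitrarily close data and $E_\alpha^*$ is not locally attractive, a fortiori not L.A.S. in $\X_+$. The only genuine obstacle across the proof is the one flagged above — justifying that eigenvalue positions control the nonlinear dynamics — while items (1)–(3) reduce to routine dominant-root estimates and (4) bypasses the linearisation altogether.
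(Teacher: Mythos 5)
Your proposal is correct and follows essentially the same route as the paper: linearisation, a compact-perturbation/essential-growth-bound argument reducing stability to the point spectrum, analysis of the resulting characteristic equations, and for item (4) the direct observation that nearby equilibria $E^*_{\tilde\alpha}$ are fixed points that do not converge to $E^*_\alpha$. The only (minor) divergence is in excluding right-half-plane roots of the endemic factor $\frac{K_x(\lambda)}{r_x}=\frac{\lambda+\mu_S R_0^x}{\lambda+\mu_S}$, where your modulus comparison ($|K_x(\lambda)|/r_x\le 1<|\lambda+\mu_S R_0^x|/|\lambda+\mu_S|$ for $\Re\lambda\ge 0$) is a cleaner variant of the paper's explicit real-part identification and reaches the same conclusion.
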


\begin{proof}\mbox{}
Let $E:=(\overline{S},\overline{x}, \overline{y})$ be an equilibrium of \eqref{Eq:Model}, then the linearised system of \eqref{Eq:Model} around $E$ is:
\begin{equation*}
\left\{
\begin{array}{rcl}
\dfrac{du(t)}{dt}&=&Au(t)+DF_E(u(t)), \ \forall t\geq 0, \\
u(0)&=&u_0\in \overline{D(A)}
\end{array}
\right.
\end{equation*}
where $DF_E:X\to X$ denotes the differential of $F$ around $E$ and is defined by:
$$DF_E:\begin{pmatrix}
b_1 \\
b_2 \\
\phi \\
b_3 \\
\psi
\end{pmatrix}=\begin{pmatrix}
-\overline{S}\int_0^\infty \beta_x(a)\phi(a)da-b_1\int_0^\infty \beta_x(a)\overline{x}(a)da-\overline{S}\int_0^\infty \beta_y(a)\psi(a)da-b_1\int_0^\infty \beta_y(a)\overline{y}(a)da \\
\overline{S}\int_0^\infty \beta_x(a)\phi(a)da+b_1\int_0^\infty \beta_x(a)\overline{x}(a)da \\
0 \\
\overline{S}\int_0^\infty \beta_y(a)\psi(a)da+b_1\int_0^\infty \beta_y(a)\overline{y}(a)da \\
0
\end{pmatrix}.$$
Let $A_0$ be the part of $A$ in $\overline{D(A)}$, \textit{i.e.} $A_0:\overline{D(A)}\ni z\longmapsto A_0z:=Az\overline{D(A)}$, then denote by $\{T_{A_0}(t)\}_{t\geq 0}$ the positive semigroup generated by $A_0$. From \eqref{Eq:Resolv}, we know that $(-\mu_0, \infty)\subset \rho(A_0)$ and consequently $s(A_0)\leq -\mu_0<0$ (where $s(A_0)$ is the spectral bound of $A_0$).

Since the semigroup $\{T_{A_0}(t)\}_{t\geq 0}$ is positive, then $\omega_0(\{T_{A_0}(t)\}_{t\geq 0})=s(A_0)$ (where $\omega_0$ denotes the growth bound) by using \cite[Theorem VI. 1.15, p. 358]{EngelNagel2000}. Moreover, we know that  $\omega_{\textnormal{ess}}(\{T_{A_0}(t)\}_{t\geq 0})$, the essential growth bound of $\{T_{A_0}\}_{t\geq 0}$, satisfies $\omega_{\textnormal{ess}}(\{T_{A_0}(t)\}_{t\geq 0})\leq \omega_0(\{T_{A_0}(t)\}_{t\geq 0})$. We then have on one hand:
$$\omega_{\textnormal{ess}}(\{T_{A_0}(t)\}_{t\geq 0})\leq -\mu_0<0.$$
On the other hand, from its above expression, we see that $DF_E(X)$ is finite dimensional, so that $DF_E$ is a compact bounded operator. From \cite[Theorem 1.2]{DucrotMagal2008} we get
$$\omega_{\textnormal{ess}}(\{T_{(A+DF_E)_0}(t)\}_{t\geq 0})=\omega_{\textnormal{ess}}(\{T_{A_0}(t)\}_{t\geq 0})\leq -\mu_0<0$$
where $\{T_{(A+DF_E)_0}(t)\}_{t\geq 0}$ is the $C_0$ semigroup generated by $(A+DF_E)_0$, that is the part of $A+DF_E$ in $\overline{D(A)}$. From \cite[Corollary IV. 2.11, p. 258]{EngelNagel2000}, we deduce that
$$\{\lambda\in \sigma((A+DF_E)_0), \Re(\lambda)\geq -\mu_0 \}$$
is finite and composed (at most) of isolated eigenvalues with finite algebraic multiplicity, where $\sigma(.)$ denotes the spectrum. Consequently, it remains to study the punctual spectrum of $(A+DF_E)_0$. Using \cite[Proposition 4.19, p. 206]{Webb85}, we know that if $s(A+DF_E)<0$ then $E$ is L.A.S., while if $s(A+DF_E)>0$ then $E$ is unstable. We consider exponential solutions, \textit{i.e.} of the form $u(t)=e^{\lambda t}v$, with $0\neq v:=(S,x,y)\in D(A)$ and $\lambda\in \C$. We obtain the following system:
\begin{equation*}
\left\{
\begin{array}{rcl}
\lambda S&=&-\mu_S S-\overline{S}\int_0^\infty \beta_x(a)x(a)da-S\int_0^\infty \beta_x(a)\overline{x}(a)da-\overline{S}\int_0^\infty \beta_y(a)y(a)da-S\int_0^\infty \beta_y(a)\overline{y}(a)da, \\
x'(a)&=&-\mu_x(a)x(a)-\lambda x(a), \\
y'(a)&=&-\mu_y(a)y(a)-\lambda y(a), \\
x(0)&=&\overline{S}\int_0^\infty \beta_x(a)x(a)da+S\int_0^\infty \beta_x(a)\overline{x}(a)da, \\
y(0)&=&\overline{S}\int_0^\infty \beta_y(a)y(a)da+S\int_0^\infty \beta_y(a)\overline{y}(a)da.
\end{array}
\right.
\end{equation*}
We then get
$$x(a)=x(0)\pi_x(a), \qquad y(a)=y(0)\pi_y(a)$$
for every $a\geq 0$,
\begin{flalign}\label{Eq:Charac1}
&S\left(\lambda+\mu_S+\int_0^\infty \beta_x(a)\overline{x}(a)da+\int_0^\infty \beta_y(a)\overline{y}(a)da\right) \nonumber \\
=&-\overline{S}\left(x(0)\int_0^\infty \beta_x(a)\pi_x(a)e^{-\lambda a}da+y(0)\int_0^\infty \beta_y(a)\pi_y(a)e^{-\lambda a}da\right)
\end{flalign}
and
\begin{equation}\label{Eq:Charac2}
\left\{
\begin{array}{rcl}
x(0)\left(1-\overline{S}\int_0^\infty \beta_x(a)\pi_x(a)e^{-\lambda a}da\right)&=&S\int_0^\infty \beta_x(a)\overline{x}(a)da, \\
y(0)\left(1-\overline{S}\int_0^\infty \beta_y(a)\pi_y(a)e^{-\lambda a}da\right)&=&S\int_0^\infty \beta_y(a)\overline{y}(a)da,
\end{array}
\right.
\end{equation}
with $(S,x(0),y(0))\neq (0,0,0)$.
\begin{enumerate}
\item Let $E:=E_0$. From \eqref{Eq:Charac1}-\eqref{Eq:Charac2}, we get:
\begin{flalign*}
S^*_0\left(x(0)\int_0^\infty \beta_x(a)\pi_x(a)e^{-\lambda a}da+y(0)\int_0^\infty \beta_y(a)\pi_y(a)e^{-\lambda a}da\right)&=-S\left(\lambda+\mu_S\right)\\
x(0)\left(1-S^*_0\int_0^\infty \beta_x(a)\pi_x(a)e^{-\lambda a}da\right)&=0, \\
y(0)\left(1-S^*_0\int_0^\infty \beta_y(a)\pi_y(a)e^{-\lambda a}da\right)&=0,
\end{flalign*}
Suppose first that $\max\{R_0^x, R_0^y\}>1$. Without loss of generality, we can suppose that $R_0^x>1$. We see that the function
$$f:\R\ni\lambda \longmapsto S^*_0\int_0^\infty \beta_x(a)\pi_x(a)e^{-\lambda a}da\in \R$$
is strictly decreasing, with $f(0)=R_0^x>1$. We see that there exists $\lambda^*>0$ such that $f(\lambda^*)=1$, and considering \textit{e.g.}
$$(S,x(0),y(0))=\left(\dfrac{S^*_0\int_0^{\infty}\beta_x(a)\pi_x(a)e^{-\lambda^* a}da}{\lambda^*+\mu_S},1,0\right)$$
we deduce that $s(A+DF_{E_0})\geq \lambda^*>0$ so $E_0$ is unstable. Suppose now that $\max\{R_0^x, R_0^y\}<1$ and that there exists $\lambda\in \sigma(A+DF_{E_0})$ such that $\Re(\lambda)\geq 0$. If $y(0)\neq 0$, then we have
$$1=S^*_0\int_0^\infty \beta_y(a)\pi_y(a)e^{-\lambda a}da\leq R_0^y<1.$$
so we have $y(0)=0$. Likewise we deduce that $x(0)=0$, but it then follows that $S=0$, which is absurd. Consequently $E_0$ is L.A.S.
\item Let $E:=E_1$. From \eqref{Eq:Charac1}-\eqref{Eq:Charac2}, we get:
\begin{flalign*}
S^*_1\left(x(0)\int_0^\infty \beta_x(a)\pi_x(a)e^{-\lambda a}da+y(0)\int_0^\infty \beta_y(a)\pi_y(a)e^{-\lambda a}da\right)&=-S\left(\lambda+\mu_S R_0^x\right)\\
x(0)\left(1-S^*_1\int_0^\infty \beta_x(a)\pi_x(a)e^{-\lambda a}da\right)&=S\mu_S(R_0^x-1), \\
y(0)\left(1-S^*_1\int_0^\infty \beta_y(a)\pi_y(a)e^{-\lambda a}da\right)&=0.
\end{flalign*}
Suppose that $R_0^y>R_0^x>1$, then when $y(0)\neq 0$, we obtain
$$S^*_1\int_0^\infty \beta_y(a)\pi_y(a)e^{-\lambda a}da=1.$$
We see that the function
$$f:\R\ni\lambda \longmapsto S^*_1\int_0^\infty \beta_y(a)\pi_y(a)e^{-\lambda a}da$$
is strictly decreasing, with $f(0)=S^*_1 r_y=\frac{r_x}{r_y}>1$, and we deduce that $s(A+DE_{E_1})>0$ so $E_1$ is unstable. Suppose now that $R_0^x>\max\{R_0^y,1\}$ and that $\lambda\in \sigma(A+DE_{E_1})$ with $\Re(\lambda)\geq 0$. If $y(0)\neq 0$ then
$$S^*_1\int_0^\infty \beta_y(a)\pi_y(a)e^{-\lambda a}da\leq  \dfrac{r_y}{r_x}<1$$
which is absurd, so $y(0)=0$. We deduce that
\begin{flalign*}
S^*_1\left(x(0)\int_0^\infty \beta_x(a)\pi_x(a)e^{-\lambda a}da\right)&=-S\left(\lambda+\mu_S R_0^x\right)\\
x(0)\left(1-S^*_1\int_0^\infty \beta_x(a)\pi_x(a)e^{-\lambda a}da\right)&=S\mu_S(R_0^x-1)
\end{flalign*}
whence
\begin{flalign*}
1&=S^*_1\int_0^\infty \beta_x(a)\pi_x(a)e^{-\lambda a}da-\dfrac{S_1^* \mu_S (R_0^x-1)\int_0^\infty \beta_x(a)\pi_x(a)e^{-\lambda a}da}{\lambda+\mu_S R_0^x} \\
&=S^*_1\int_0^\infty \beta_x(a)\pi_x(a)e^{-\lambda a}da\left(\dfrac{\lambda +\mu_S}{\lambda +\mu_S R_0^x}\right)
\end{flalign*}
so
$$\dfrac{\lambda +\mu_S R_0^x}{\lambda +\mu_S}=S^*_1\int_0^\infty \beta_x(a)\pi_x(a)e^{-\lambda a}da.$$
Considering real and imaginary parts of $\lambda$, we get:
\begin{flalign*}
&\dfrac{\left((\Re(\lambda)+\mu_S R_0^x)+i\Im(\lambda)\right)\left((\Re(\lambda)+\mu_S)-i \Im(\lambda)\right)}{\left(\Re(\lambda)+\mu_S\right)^2+\Im(\lambda)^2}\\
&=S^*_1\int_0^\infty \beta_x(a)\pi_x(a)e^{-\Re(\lambda)a}\left(\cos(a\Im(\lambda))+i\sin(a\Im(\lambda))\right)da
\end{flalign*}
then identifying the real part, we obtain:
\begin{flalign*}
&(\Re(\lambda)+\mu_S R_0^x)(\Re(\lambda)+\mu_S)+\Im(\lambda)^2
\\
&=\left((\Re(\lambda)+\mu_S)^2+\Im(\lambda)\right)S^*_1 \int_0^\infty \beta_x(a)\pi_x(a)e^{-\Re(\lambda)a}\cos(a\Im(\lambda))da.
\end{flalign*}
It follows that
\begin{flalign*}0<\mu_S(R_0^x-1)(\Re(\lambda)+\mu_S)&=\left((\Re(\lambda)+\mu_S)^2+\Im(\lambda)^2\right)\left(S^*_1 \int_0^\infty \beta_x(a)\pi_x(a)e^{-\Re(\lambda)a}\cos(a \Im(\lambda))da-1\right) \\
&\leq \left((\Re(\lambda)+\mu_S)^2+\Im(\lambda)^2\right)\left(\dfrac{r_y}{r_x}-1\right)\leq 0
\end{flalign*}
since $\Re(\lambda)\geq 0$. We deduce that $s(A+DE_{E_1})<0$ and $E_1$ is L.A.S.
\item Similar arguments as for the latter point allow us to prove the result for $E_2$.
\item Suppose that $R_0^x=R_0^y>1$. Let $\alpha \in[1,2]$. Since the set of equilibria $\{E^*_a, a\in[1,2]\}$ is compact, we can prove that for every $\eta>0$ there exists $\tilde{\alpha}\in[1,2]\setminus\{\alpha\}$ such that $\|E^*_{\tilde{\alpha}}-E^*_\alpha\|_{\X}\leq \eta$. Moreover, by definition of $E^*_{\tilde{\alpha}}$, we have 
$$\lim_{t\to \infty} \|\Phi_t(E^*_{\tilde{\alpha}})-E^*_\alpha\|_{\X}=\|E^*_{\tilde{\alpha}}-E^*_\alpha\|_{\X}>0$$
hence $E^*_\alpha$ is not locally attractive, and therefore not L.A.S.
\end{enumerate}
\end{proof}

\section{Compact attractor and basins of attraction} \label{Sec:Attractor}

\subsection{Preliminaries }

In the sequel, we will denote by $\OO_z=\{\Phi_t(z), t\geq 0\}$ the orbit starting from $z\in \X_+$ and
$$\omega(z)=\underset{\tau\geq 0}{\bigcap} \overline{\{\Phi_t(z), t\geq \tau\}}$$
the $\omega$-limit set of $z$. We follow \cite[Section 3]{Perasso2019}, to prove the existence of a compact attractor.

\begin{lemma}
For every $z\in \X_+$, the orbit $\OO_z\subset \X_+$ is relatively compact, \textit{i.e.} $\overline{\OO_z}$ is compact.
\end{lemma}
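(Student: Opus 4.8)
The plan is to prove relative compactness of the orbit $\OO_z$ by exploiting the explicit Duhamel decomposition of the semiflow provided in Theorem \ref{Thm:Global}. The overall strategy is to split each component of $\Phi_t(z)$ into a part that carries the initial data forward and a part generated by the boundary condition (the new infections), show that the first part vanishes asymptotically in $L^1$, and show that the second part lies in a compact set by verifying an Arzel\`a--Ascoli-type criterion in $L^1(0,\infty)$.

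First I would recall the characterisation of relative compactness in $L^1(0,\infty)$ via the Fr\'echet--Kolmogorov (Riesz) theorem: a bounded subset $K\subset L^1(0,\infty)$ is relatively compact if and only if it is uniformly integrable at infinity, that is $\sup_{f\in K}\int_M^\infty |f(a)|\,da\to 0$ as $M\to\infty$, and equicontinuous in the mean, that is $\sup_{f\in K}\int_0^\infty |f(a+h)-f(a)|\,da\to 0$ as $h\to 0$. The boundedness of the orbit is already in hand: Theorem \ref{Thm:Global} gives $t_{\max}=\infty$ together with the asymptotic bounds $\limsup_{t\to\infty}S(t)\le k$ and $\limsup_{t\to\infty}x(t,a)\le k e^{-\mu_0 a}$ (and similarly for $y$), so $\OO_z$ sits in a bounded subset of $\X_+$, and the exponential pointwise bound already delivers the tail condition $\int_M^\infty x(t,a)\,da\to 0$ uniformly in $t$.

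Next I would treat the two pieces of the decomposition $\Phi_t(z)=(0,\Phi_t^{x,1},\Phi_t^{y,1})+(\Phi_t^S,\Phi_t^{x,2},\Phi_t^{y,2})$ separately. For the first piece, formula \eqref{Eq:Phi_x1} gives $\Phi_t^{x,1}(z)(a)=x_0(a-t)e^{-\int_0^t\mu_x(s)ds}\chi_{[t,\infty)}(a)$, whose $L^1$-norm is bounded by $e^{-\mu_0 t}\|x_0\|_{L^1}\to 0$; thus this part converges to zero and contributes nothing to the $\omega$-limit set, so it suffices to establish compactness of the second piece (up to the vanishing perturbation). For the second piece, the key is the smoothing built into \eqref{Eq:Phi_x2}: for $a\le t$ one has $\Phi_t^{x,2}(z)(a)=B_x(t-a)\,e^{-\int_0^a\mu_x(u)du}\chi_{[0,t]}(a)$, where $B_x(s):=\Phi_s^S(z)\int_0^\infty\beta_x(r)\Phi_s^x(z)(r)\,dr$ is the incidence term. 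Here the plan is to show that $B_x$ is bounded and Lipschitz (or at least uniformly continuous) on $\R_+$: boundedness follows from the $\limsup$ estimates of Theorem \ref{Thm:Global} together with $\beta_x\in L^\infty$, and the regularity follows by differentiating the $S$-equation and using the transport structure for $x$. The $L^1$-mean equicontinuity in $a$ of $\Phi_t^{x,2}$ then reduces to the equicontinuity of $B_x$ and of the factor $e^{-\int_0^a\mu_x}$, both of which are uniform in $t$; combined with the tail estimate this gives relative compactness of $\{\Phi_t^{x,2}(z):t\ge 0\}$ by Fr\'echet--Kolmogorov. The same argument applies verbatim to the $y$-component, and the $S$-component lives in the bounded, hence relatively compact, interval $[0,k']$ of $\R$.

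The main obstacle I anticipate is establishing the uniform-in-$t$ equicontinuity in the mean for the boundary-generated parts $\Phi_t^{x,2}$ and $\Phi_t^{y,2}$, precisely because $B_x$ is only defined through the solution itself and a naive translation estimate must control $\int_0^\infty |B_x(t-a-h)-B_x(t-a)|\,da$ uniformly in $t$; this is where the continuity (and ideally a modulus of continuity independent of $t$) of the incidence term $B_x$ must be pinned down carefully, rather than merely its boundedness. A secondary technical point is handling the seam at $a=t$ where the characteristic functions $\chi_{[0,t]}$ and $\chi_{[t,\infty)}$ meet, but since the first piece already tends to zero in $L^1$ this seam contributes a vanishing error and does not affect the limiting compactness. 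I expect that once the uniform continuity of $B_x$ is secured, the verification of the Fr\'echet--Kolmogorov conditions is routine.
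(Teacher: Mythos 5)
Your proposal follows essentially the same route as the paper's proof: the same Duhamel splitting $\Phi_t=(0,\Phi_t^{x,1},\Phi_t^{y,1})+(\Phi_t^S,\Phi_t^{x,2},\Phi_t^{y,2})$, with the initial-data piece decaying exponentially in $L^1$ and the boundary-generated piece shown relatively compact via the Fr\'echet--Kolmogorov theorem. If anything you are more explicit than the printed proof, which only exhibits the uniform exponential domination $\Phi_t^{x,2}(z)(a)\leq C e^{-\mu_0 a}$ and leaves implicit the translation-equicontinuity step that you correctly identify as the real work, namely a modulus of continuity, uniform in $t$, for the incidence term $B_x(s)=\Phi_s^S(z)\int_0^\infty \beta_x(r)\Phi_s^x(z)(r)\,dr$.
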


\begin{proof}
Define the ball $B_r:=\{\tilde{z}\in \X, \|\tilde{z}\|_{\X}\leq r\}$ for any $r>0$. From \eqref{Eq:Phi_x1}-\eqref{Eq:Phi_y1}, we see that for every $r>0$ and every $z\in \X_+\cap B_r$, we have
$$\left\|\left(0,\Phi_t^{x,1}, \Phi_t^{y,1}\right)\right\|_{\X}\leq 2r e^{-\mu_0 t}, \quad \forall t\geq 0.$$
Moreover we can prove that for any $t\geq 0$, $(\Phi_t^S, \Phi_t^{x,2}, \Phi_t^{y,2})$ maps bounded sets of $\X_+$ into relatively compact sets in $\X$. Indeed, let $M\subset \X_+$ be a bounded subset of $\X$, \textit{i.e.} there exists $r>0$ such that $\|z\|_{\X}\leq r$ for any $z\in M$. First, we see that $\Phi_t^S(M)$ is relatively compact since it is finite dimensional. Moreover, from Theorem \ref{Thm:Global}, we deduce that for every $t\geq 0$, there exists a constant $c(r)>0$ such that
$$\int_0^\infty \Phi_{t-a}^x(z)(s)ds\leq c, \qquad \int_0^\infty \Phi_{t-a}^y(z)(s)ds\leq c$$
for any $a\in[0,t]$ and every $z\in M$. Using \eqref{Eq:Phi_x2}-\eqref{Eq:Phi_y2}, we deduce that
$$\Phi_t^{x,2}(z)(a)\leq r\|\beta_x\|_{L^\infty}c e^{-\mu_0 a}\chi_{[t,\infty)}(a), \qquad \Phi_t^{y,2}(z)(a)\leq r\|\beta_y\|_{L^\infty}c e^{-\mu_0 a}\chi_{[t,\infty)}(a)$$
for any $(t,z)\in \R_+\times M$. Finally, the Fréchet-Kolmogorov theorem ensures that the sets $\Phi_t^{x,2}(M)$ and $\Phi_t^{y,2}(M)$ are relatively compact. From \cite[Proposition 3.1.3 p. 100]{Webb85}, we deduce that for every $z\in \X_+$, the orbit $\OO_x$ is relatively compact.
\end{proof}

The latter compactness result of the orbits then leads to the existence of a compact attractor in the following sense (see \textit{e.g.} \cite[Lemma 3.1.1 and 3.1.2, p. 36]{Hale88}, or \cite[Theorem 4.1, p. 167]{Walker80}).

\begin{lemma}\label{Lemme:OmegaLimit}
For every $z\in \X_+$,
\begin{enumerate}
\item $\omega(z)$ is non-empty, compact and connected;
\item $\omega(z)$ is invariant under $\Phi_t$, \textit{i.e.} $\Phi_t(\omega(z))=\omega(z)$;
\item $\omega(z)$ is an attractor, \textit{i.e.} $\lim_{t\to \infty} d(\Phi_t(z), \omega(z))=0$. 
\end{enumerate}
\end{lemma}
We remind the following classical result and we give its proof for completeness.
\begin{lemma}\label{Lemma:Semigroup}
Let $c\in(0,\infty)$, $k\in(0,\infty)$ and $u$ be the solution of the PDE:
\begin{equation*}
\left\{
\begin{array}{rcl}
\dfrac{\partial v(t,a)}{\partial t}+\dfrac{\partial v(t,a)}{\partial a}&=&-\mu_x(a)v(t,a), \\
v(t,0)&=&k\displaystyle \int_0^c \beta_x(a)v(t,a)da, \\
v(0,a)&=&v_0(a)
\end{array}
\right.
\end{equation*}
for every $a\in (0,c)$ and every $t\geq 0$. Suppose that $u_0\in L^1_+(0,c)\setminus \{0\}$ and that
$$k\int_0^c \beta_x(a)\pi_x(a)da>1$$
then
$$\lim_{t\to \infty}\int_0^c v(t,a)da=\infty.$$
\end{lemma}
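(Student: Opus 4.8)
The plan is to collapse this linear transport problem onto a scalar renewal equation for the boundary flux and to read off exponential growth from a positive characteristic root. First I would integrate along the characteristics $a-t=\text{const}$. Writing $B(t):=v(t,0)$, the method of characteristics gives $v(t,a)=B(t-a)\pi_x(a)$ for $a\le t$ and $v(t,a)=v_0(a-t)\,\pi_x(a)/\pi_x(a-t)$ for $a>t$. Substituting this into the renewal boundary condition $v(t,0)=k\int_0^c\beta_x(a)v(t,a)\,da$ and noting that for $t\ge c$ every age $a\in(0,c)$ satisfies $a\le t$ (so the initial datum has left the interval), one obtains the homogeneous renewal equation
$$B(t)=\int_0^c K(a)B(t-a)\,da,\qquad K(a):=k\,\beta_x(a)\pi_x(a)\ge 0,$$
whose total mass is $\int_0^c K(a)\,da=k\int_0^c\beta_x(a)\pi_x(a)\,da>1$ by hypothesis.

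Next I would locate the Malthusian parameter. The Laplace transform $\widehat K(\lambda):=\int_0^c K(a)e^{-\lambda a}\,da$ is continuous and strictly decreasing on $\R$ (as $K\ge 0$, $K\not\equiv 0$), with $\widehat K(0)>1$ and $\widehat K(\lambda)\to 0$ as $\lambda\to+\infty$; hence there is a unique $\lambda^*>0$ with $\widehat K(\lambda^*)=1$. The decisive remark is that $t\mapsto e^{\lambda^* t}$ is then an exact solution of the homogeneous renewal equation, since $\int_0^c K(a)e^{\lambda^*(t-a)}\,da=e^{\lambda^* t}\widehat K(\lambda^*)=e^{\lambda^* t}$. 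Once I know that $B$ is bounded below by a positive constant $m$ on some interval $[T_0-c,T_0]$ with $T_0\ge c$, I pick $\delta>0$ so small that $\delta e^{\lambda^* s}\le B(s)$ there; because the resolvent kernel of the renewal equation is nonnegative, its solution depends monotonically on its history, and comparing $B$ with $\delta e^{\lambda^*\cdot}$ yields $B(t)\ge \delta e^{\lambda^* t}$ for all $t\ge T_0$. Finally, for $t\ge T_0+c$ one has $\int_0^c v(t,a)\,da=\int_0^c\pi_x(a)B(t-a)\,da\ge \delta e^{\lambda^*(t-c)}\int_0^c\pi_x(a)\,da\to\infty$, which is the claim.

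The step I expect to be the main obstacle is the eventual strict positivity of the birth rate on a full interval of length $c$, used above to produce $T_0$ and $m$. Here one must exclude the degenerate scenario in which all of the (nonzero) initial mass ages out of $(0,c)$ without ever crossing $\supp(\beta_x)$; this is exactly where the structure of the fertile window $[\underline{\beta_x},\overline{\beta_x})$ from Assumption \ref{Assump1} enters, guaranteeing (for the relevant range of $c$, namely $c\le\overline{\beta_x}$ in the intended application) that every individual younger than $c$ passes through a positive-$\beta_x$ age before leaving the interval, so that $B(t)>0$ for all large $t$ and, by continuity, $B\ge m>0$ on some such interval. A purely semigroup-theoretic alternative, consistent with Section \ref{Sec:Eq}, is also available: realise the dynamics as a positive $C_0$-semigroup on $L^1(0,c)$ whose generator $A$ has characteristic equation $\widehat K(\lambda)=1$, deduce $s(A)\ge\lambda^*>0$, and invoke irreducibility to ensure that the spectral projection of any nonzero $v_0\ge 0$ onto the dominant eigenspace is non-zero; asynchronous exponential growth then forces $\int_0^c v(t,a)\,da=\|v(t,\cdot)\|_{L^1}$ to blow up like $e^{\lambda^* t}$.
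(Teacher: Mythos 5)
Your primary argument is correct in outline and takes a genuinely different route from the paper. The paper stays at the level of the semigroup: it realises the dynamics as a positive $C_0$-semigroup $\{T_\A(t)\}_{t\ge0}$ on $L^1(0,c)$, uses compactness of the resolvent to get $\omega_{\textnormal{ess}}=-\infty$, shows $(\lambda-\A)^{-1}$ is positivity improving (hence the semigroup is irreducible), and invokes the asynchronous exponential growth theorem of Cl\'ement et al.\ to conclude that $e^{-s(\A)t}T_\A(t)v_0\to Pv_0>0$ with $s(\A)>0$ --- which is exactly your ``purely semigroup-theoretic alternative''. Your main route instead collapses the problem onto the scalar renewal equation $B=K*B$ for the boundary flux, locates the Malthusian root $\lambda^*>0$ of $\widehat K(\lambda^*)=1$, and compares $B$ with the exact exponential solution: the difference $w=B-\delta e^{\lambda^*\cdot}$ satisfies $w=K*w+F$ with a nonnegative forcing $F$ built from the history, so $w=F+R*F\ge0$ with $R=\sum_{n\ge1}K^{*n}\ge0$. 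This is more elementary (no irreducibility or spectral-gap machinery), and it delivers an explicit exponential lower bound rather than the full asymptotic profile; both are legitimate proofs of the divergence claim.

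The step you leave as a sketch --- eventual uniform positivity of $B$ on an interval of length $c$ --- is indeed the crux, and your diagnosis of it is accurate rather than evasive. It can be closed by the same propagation-of-positivity argument the paper uses in the proof of Proposition \ref{Prop:Attract} (4): the contribution of the initial datum to the birth rate, $f(t)=k\int_t^c\beta_x(a)v_0(a-t)\pi_x(a)/\pi_x(a-t)\,da$, is continuous, nonnegative, and not identically zero as soon as $v_0$ has mass in $[0,\min\{c,\overline{\beta_x}\})$; since $B\ge f$ and $K>0$ a.e.\ on $[\underline{\beta_x},\min\{c,\overline{\beta_x}\})$ by Assumption \ref{Assump1}, positivity of $B$ on one interval propagates forward and eventually covers a half-line, giving the required $m>0$ on $[T_0-c,T_0]$. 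Your caveat about the range of $c$ is also substantively correct and worth recording: if $c>\overline{\beta_x}$ and $v_0$ is supported in $(\overline{\beta_x},c)$, then $f\equiv0$, hence $B\equiv0$ by Gronwall and the conclusion fails, so the statement implicitly requires $\int_0^{\min\{c,\overline{\beta_x}\}}v_0(a)\,da>0$ (automatic when $c\le\overline{\beta_x}$, and guaranteed in the paper's applications because the initial data there are taken in $\S_x$, resp.\ $\S_y$). The paper's own proof is exposed to the same point, since its claim that $(\lambda-\A)^{-1}$ is positivity improving breaks down for $h$ supported in $(\overline{\beta_x},c)$; so flagging this is a feature of your write-up, not a defect.
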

The same holds when replacing $x$ by $y$.

\begin{proof}
Define the linear operator $\A:D(\A)\subset L^1(0,c)\to L^1(0,c)$ by
$$\A u=-u'-\mu_x u$$
with
$$D(\A):=\left\{u\in W^{1,1}(0,c), u(0)=k\int_0^c \beta_x(a)u(a)da\right\}$$
where $W^{1,1}(0,c):=\left\{u\in L^1(0,c), u'\in L^1(0,c)\right\}$ is the Sobolev space. It is classical that $\A$ generates a positive $C_0$-semigroup $\{T_\A(t)\}_{t\geq 0}$. Since $[0,c]$ is compact, then $\A$ has a compact resolvent, and consequently the spectrum of $\A$ is composed at most of isolated eigenvalues with finite algebraic multiplicity. This follows from the fact that the canonical injection $i:(D(\A),\|\cdot\|_{D(\A)}\to (L^1(0,c), \|\cdot\|_{L^1(0,c)})$ is compact by the Rellich-Kondrachov Theorem. Any eigenvalue of $\A$ has to satisfy:
$$u'+\lambda u+\mu_x u=0$$
where $u\in D(\A)$. We hence get the following characteristic equation:
$$1=k\int_0^c \beta_x(a)e^{-\lambda a}\pi_x(a)da$$
which is satisfied for some $\lambda>0$ by definition of $\ep$. Now we prove that $(\lambda-\A)^{-1}$ is positivity improving for $\lambda$ large enough, \textit{i.e.} $(\lambda-\A)^{-1}h(s)>0$ a.e. $s\in[0,c]$ for any $h\in L^1_+(0,c)\setminus \{0\}$. Let $\nu>0$, $h\in L^1_+(0,c)\setminus\{0\}$ and $u=(\lambda-\A)^{-1}h$. Then we have
$$u'+\lambda u+\mu_x u=h$$
with $u\in D(\A)$, \textit{i.e.}
\begin{flalign*}
u(a)&=u(0)e^{-\lambda a-\int_0^a\mu_x(s)ds}+\int_0^a h(s)e^{-\lambda(a-s)-\int_s^a \mu(\xi)d\xi}ds \\
&=k\left(\int_0^c \beta_x(s)u(s)ds\right) e^{-\lambda a-\int_0^a\mu_x(s)ds}+\int_0^a h(s)e^{-\int_s^a (\lambda+\mu_x(\xi))d\xi}ds
\end{flalign*}
and for $\lambda>0$ large enough, we get
$$\left(1-k\int_0^c \beta_x(a)e^{-\lambda a}\pi_x(a)da\right)\int_0^c \beta_x(a)u(a)da=\int_0^c \beta_x(a)\int_0^a h(s)e^{-\int_s^a (\lambda+\mu_x(\xi))d\xi}dsda.$$
We see that
$$\int_0^c \beta_x(a)\int_0^a h(s)e^{-\lambda(a-s)-\int_s^a \mu_x(\xi)d\xi}dsda>0$$
whence $\int_0^c \beta_x(a)u(a)da>0$ and $u(a)>0$ for every $a\in[0,c]$. We deduce that $(\lambda-\A)^{-1}$ is positivity improving. Using \cite[p. 165]{Clement87}, we deduce that $\{T_\A(t)\}_{t\geq 0}$ is irreducible, \textit{i.e.} for any $\phi\in L^1_+(0,\infty)\setminus \{0\}$ and any $\psi\in L^\infty_+(0,\infty)\setminus\{0\}$, there exists $t>0$ such that $\left\langle T_{\A}(t)\phi, \psi \right \rangle>0$, where $\left\langle \cdot, \cdot \right\rangle$ denotes the duality pairing between $L^1$ and $L^\infty$. Since the semigroup is positive, we know that
$$\omega_0(\{T_\A(t)\}_{t\geq 0})=s(\A)>0.$$
Moreover, since the spectrum of $\A$ is punctual, then
$$\omega_{\textnormal{ess}}\left(\{T_\A(t)\}_{t\geq 0}\right)=-\infty.$$
Consequently $\{T_\A(t)\}_{t\geq 0}$ is both irreducible and has a spectral gap (\textit{i.e.} $\omega_0>\omega_{\textnormal{ess}}$). On one hand we know that $s(\A)$ is a simple pole of the resolvent of $\A$, with geometric multiplicity equal to one (see e.g. \cite[p. 224]{Clement87}). On the other hand, consequently to \cite[Theorem 9.11. p. 224]{Clement87} we get
$$\lim_{t\to \infty} \left\|e^{-s(\A) t}T_\A(t)f-Pf\right\|_{L^1(0,c)}=0$$
for every $f\in L^1(0,c)$, where $P$ is the projection on $\Ker(\lambda-\A)$ along $\Rg(\lambda-\A)$, that is an operator of rank one and positivity improving. Since $v_0\in L^1_+(0,c)\setminus \{0\}$, then we deduce that $P v_0(a)>0$ for a.e. $a\in(0,c)$ and that $\lim_{t\to \infty}e^{-s(\A)t}=0$. Thus we obtain
$$\lim_{t\to \infty}\int_0^c T_\A(t)v_0(a)da=\lim_{t\to \infty} \int_0^c v(t,a)da=\infty.$$
\end{proof}

\subsection{Basins of attraction}

We now give some results about the attractive sets, depending on the initial condition as well as the thresholds $R_0^x$ and $R_0^y$. 
\begin{proposition} \label{Prop:Attract}
Suppose that Assumptions \ref{Assump1} holds, then:
\begin{enumerate}
\item the sets $\partial \S_x$ and $\partial \S_y$ are positively invariant, \textit{i.e.} $\Phi_t(\partial \S_x)\subset \partial \S_x$ and $\Phi_t(\partial \S_y)\subset \partial \S_y$, $\forall t\geq 0$. Moreover, for every $z:=(x_0,y_0,z_0)\in\partial \S_x$ (respectively $z\in \partial \S_y$), then
\begin{equation}\label{Eq:Phi-Decreas}
\|\Phi_t^x(z)\|_{L^1(\R_+)}\leq \|x_0\|_{L^1(\R_+)} e^{-\mu_0 t}, \quad \left(\textnormal{resp. } 
\|\Phi_t^y(z)\|_{L^1(\R_+)}\leq \|y_0\|_{L^1(\R_+)} e^{-\mu_0 t}\right)
\end{equation}
for every $t\geq 0$;

\item the equilibrium $E_0$ is globally exponentially stable for $\Phi_t$ restricted to $\partial \S_x\cap \partial \S_y$;

\item there exists $c>0$ such that for every $z\in \X_+$ we have:
$$\liminf_{t\to\infty}\Phi_t^S(z)\geq c;$$

\item for every $z\in \S_x$ (resp. $z\in \S_y$), there exists $\tau\geq 0$ such that
$$\int_0^\infty \beta_x(a)\Phi^x_t(z)(a)da>0 \qquad (\textnormal{resp. } \int_0^\infty \beta_y(a)\Phi^y_t(z)(a)da>0)$$
for every $t\geq \tau$. Moreover, the sets $\S_x$ and $\S_y$ are asymptotically positively invariant, \textit{i.e.} for every $z\in \S_x$ (resp. $z\in \S_y$), there exists $\tau\geq 0$ such that $\Phi_t(z)\in \S_x$ (resp. $\Phi_t(z)\in \S_y$) for every $t\geq \tau$.

\item Let $z\in \partial \S_x$, then $\|\Phi_t^x(z)\|_{L^1(0,\infty)}\leq e^{-\mu_0 t} \|\Phi_0^x(z)\|_{L^1(0,\infty)}$ for every $t\geq 0$. Moreover, there hold:
\begin{enumerate}
\item if $R^y_0 >1$ and $z\in \S_y$ then $\omega(z)\subset \partial \S_x\cap\S_y$ and $\lim_{t\to \infty}\|\Phi_t(z)-E_2\|_{\X}=0$;
\item if $R^y_0\leq 1$ then $\lim_{t\to \infty}\|\Phi_t(z)-E_0\|_{\X}=0$.
\end{enumerate}

\item Let $z\in \partial \S_y$, then $\|\Phi_t^y(z)\|_{L^1(0,\infty)}\leq e^{-\mu_0 t}\|\Phi_0^y(z)\|_{L^1(0,\infty)}$ for every $t\geq 0$. Moreover, there hold:
\begin{enumerate}
\item if $R^x_0 >1$ and $z\in \S_x$ then $\omega(z)\subset \S_x\cap \partial \S_y$ and $\lim_{t\to \infty}\|\Phi_t(z)-E_1\|_{\X}=0$;
\item if $R^x_0\leq 1$ then $\lim_{t\to \infty}\|\Phi_t(z)-E_0\|_{\X}=0$.
\end{enumerate}

\item Let $z\in \X_+$:
\begin{enumerate}
\item if $R_0^x\leq 1$, then $\omega(z)\subset \partial \S_x$ and $\lim_{t\to \infty}\|\Phi_t^x(z)\|_{L^1(0,\infty)}=0$;
\item if $R_0^y\leq 1$, then $\omega(z)\subset \partial \S_y$ and $\lim_{t\to \infty}\|\Phi_t^y(z)\|_{L^1(0,\infty)}=0$;
\end{enumerate}
\item Let $z\in \S_x\cap \S_y$:
\begin{enumerate}
\item if $\max\{R^x_0 , R^y_0 \}>1$, then $\omega(z) \subset \S_x\cup \S_y$; 
\item if $R_0^x>\max\{1,R_0^y\}$, then $\omega(z)\subset \S_x$;
\item if $R_0^y>\max\{1,R_0^x\}$, then $\omega(z)\subset \S_y$.
\end{enumerate}
\end{enumerate}
\end{proposition}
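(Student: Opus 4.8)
The plan is to prove Proposition~\ref{Prop:Attract}, focusing on the final and most substantive part, item~(8), while indicating how the earlier items set it up. I would prove these results essentially in the order stated, because each item feeds into the next. For items~(1)--(2), the key observation is that on $\partial\S_x$ the infected population $x$ is ``too old to infect'': since $x_0$ is supported where $\beta_x$ vanishes (a.e.\ on $[\overline{\beta_x},\infty)$), the boundary term $x(t,0)=S(t)\int_0^\infty\beta_x(a)x(t,a)\,da$ stays zero, so the $x$-component simply ages out and decays. Concretely, using the explicit formulas \eqref{Eq:Phi_x1}--\eqref{Eq:Phi_x2} one checks that $\Phi_t^{x,2}(z)\equiv 0$ and $\Phi_t^{x,1}(z)$ is just the translated-and-damped initial datum, giving both the positive invariance of $\partial\S_x$ and the exponential bound \eqref{Eq:Phi-Decreas}; item~(2) then follows because on $\partial\S_x\cap\partial\S_y$ both infected components decay exponentially, whence $S$ converges to $\Lambda/\mu_S=S_0^*$.

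\medskip

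For item~(3), the uniform persistence of $S$, I would use the differential inequality satisfied by $\Phi_t^S$: from \eqref{Eq:Model} together with the asymptotic bounds of Theorem~\ref{Thm:Global} on $\int\beta_x x$ and $\int\beta_y y$, one gets $S'(t)\geq \Lambda-(\mu_S+k\|\beta_x\|_\infty+k\|\beta_y\|_\infty)S(t)$ for $t$ large, and a Gronwall argument yields a uniform lower bound $c>0$ independent of $z$. Item~(4) is the crucial persistence-type step: given $z\in\S_x$, the datum $x_0$ has positive mass on a set where $\beta_x>0$ will eventually hold after aging, so there is $\tau\geq 0$ with $\int_0^\infty\beta_x(a)\Phi_t^x(z)(a)\,da>0$ for $t\geq\tau$; here I would argue directly from \eqref{Eq:Phi_x1}, tracking how the positive mass on $[\underline{\beta_x},\overline{\beta_x})$ guaranteed by Assumption~\ref{Assump1}(2) is transported into the support of $\beta_x$. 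Once the force of infection is positive, the boundary condition regenerates fresh infections and $\S_x$ becomes asymptotically positively invariant. Items~(5)--(6) combine the decay on $\partial\S_x$ (or $\partial\S_y$) with Proposition~\ref{Prop:Magal} applied to the reduced one-strain system \eqref{Eq:Model_1pop}: on $\partial\S_x$ the $x$-strain is extinct, so the dynamics collapse to the single-disease model in $(S,y)$, and the global attractivity of $E_2$ (if $R_0^y>1$) or $E_0$ (if $R_0^y\leq 1$) is inherited directly.

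\medskip

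Item~(7) is where Lemma~\ref{Lemma:Semigroup} does the real work, and I would handle the contrapositive: if $R_0^x\leq 1$ then no strain-$x$ persistence is possible. The mechanism is that persistence of $x$ would force, on a suitable finite age window $[0,c]$ with $c>\overline{\beta_x}$, the linear aging semigroup of Lemma~\ref{Lemma:Semigroup} to have $s(\mathcal{A})>0$; but $s(\mathcal{A})>0$ requires $k\int_0^c\beta_x\pi_x\,da>1$ with $k$ an asymptotic value of $\Phi_t^S$, and since $\limsup_t\Phi_t^S\leq S_0^*=\Lambda/\mu_S$ by Theorem~\ref{Thm:Global}, this would give $S_0^* r_x=R_0^x>1$, contradicting $R_0^x\leq 1$. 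Therefore $\|\Phi_t^x(z)\|_{L^1}\to 0$ and $\omega(z)\subset\partial\S_x$. Finally, item~(8) is the competitive-exclusion heart of the proposition and the step I expect to be the main obstacle. For (8)(a), I would argue by contradiction: if $\omega(z)\subset\partial\S_x\cap\partial\S_y$ then by item~(2) the whole $\omega$-limit set reduces to $E_0$, forcing $\Phi_t^S(z)\to S_0^*$; but then on a fixed age window the aging operator with gain $k$ close to $S_0^*$ satisfies $k\int_0^c\beta_x\pi_x\,da\to R_0^x>1$ (taking the strain with $R_0>1$), so by Lemma~\ref{Lemma:Semigroup} the corresponding infected mass diverges, contradicting $\omega(z)\subset\partial\S_x$. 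The delicate point is making the comparison with Lemma~\ref{Lemma:Semigroup} rigorous: one must restrict to a finite age interval $(0,c)$, bound $\Phi_t^S(z)$ below by a constant $k$ using item~(3) and the convergence of $S$ toward $S_0^*$, and verify the threshold condition $k\int_0^c\beta_x\pi_x\,da>1$ persists, all while controlling the truncation of the age variable. For (8)(b)--(c), once $R_0^x>\max\{1,R_0^y\}$, the same comparison shows strain $x$ persists, and by item~(7)(b) applied with $R_0^y$ one must additionally rule out strain $y$ when $R_0^y\leq 1$; when both exceed $1$ the sharper exclusion $\omega(z)\subset\S_x$ requires comparing the two reproduction thresholds and is the genuinely new feature beyond the one-strain theory.
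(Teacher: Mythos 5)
Your outline for items (1)--(6) follows the paper's route (explicit Duhamel formulas and a Gronwall argument for the invariance of $\partial\S_x$, the differential inequality for $S$ in item (3), transport of positive mass into $\supp\beta_x$ for item (4), reduction to \eqref{Eq:Model_1pop} and Proposition \ref{Prop:Magal} for items (5)--(6)), and those parts are fine as a sketch. The problems are in items (7) and (8), which you correctly identify as the substantive part but where your arguments either run backwards or stop short of the key idea.

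For item (7) your contrapositive via Lemma \ref{Lemma:Semigroup} does not work as stated. That lemma is a one-way subsolution comparison: it yields blow-up \emph{when} the boundary gain $k$ satisfies $k\int_0^c\beta_x\pi_x\,da>1$; it gives no information when the threshold is $\leq 1$, and the claim that ``persistence of $x$ would force $s(\mathcal{A})>0$'' is not an implication you can draw --- persistence of the nonlinear system does not constrain the spectral bound of a linear truncated operator, and in the critical case $R_0^x=1$ the relevant linear problem has spectral bound $0$ and does not decay, so no contradiction with $\limsup_t\Phi^S_t\leq\Lambda/\mu_S$ is available. The paper's proof is the opposite comparison: bound \eqref{Eq:Model} \emph{from above} by the one-strain system \eqref{Eq:Model_1pop} (dropping the nonnegative $y$-term in the $S$-equation) and invoke Proposition \ref{Prop:Magal}, which already covers $R_0^x\leq 1$ including the critical case.

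For item (8)(a) you negate the wrong statement: the claim is $\omega(z)\cap\partial\S_x\cap\partial\S_y=\emptyset$, whereas assuming ``$\omega(z)\subset\partial\S_x\cap\partial\S_y$'' and deriving a contradiction only shows $\omega(z)\not\subset\partial\S_x\cap\partial\S_y$. The paper instead picks a single point $w\in\omega(z)\cap\partial\S_x\cap\partial\S_y$, uses the invariance of $\omega(z)$ together with the exponential attraction of $E_0$ on $\partial\S_x\cap\partial\S_y$ (item (2)) to conclude $E_0\in\omega(z)$, and contradicts a previously established uniform repulsion property \eqref{Eq:Unstab} of $E_0$ relative to $\S_x\cap\S_y$ (itself proved with Lemma \ref{Lemma:Semigroup} and the lower bound on $S$ from item (3)). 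Finally, for (8)(b) you omit the actual mechanism of competitive exclusion, which is the heart of the proposition: assuming $\|\Phi_t^x(z)\|_{L^1}\leq\ep$ for all $t$, one sandwiches the $(S,y)$-dynamics between two one-strain systems to get $S(t)\to 1/r_y$, and since $r_x>r_y$ one can choose $\ep$ with $r_x(1/r_y-\ep)>1$, so the truncated threshold \eqref{Eq:R0x_small} exceeds $1$ and Lemma \ref{Lemma:Semigroup} forces $\int_0^c x(t,a)\,da\to\infty$, a contradiction; one then rules out $\omega(z)\cap\partial\S_x\neq\emptyset$ exactly as in (8)(a) but with $E_2$ in place of $E_0$. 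Saying this step ``requires comparing the two reproduction thresholds'' names the difficulty without supplying the argument.
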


\begin{proof}\mbox{}
\begin{enumerate}
\item Let $z\in \partial \S_x$. We remind that the component in $x$ of the semiflow rewrites as $\Phi_t^x(z)(a)=\Phi_t^{x,1}(z)(a)+\Phi_t^{x,2}(z)(a)$, where $\Phi_t^{x,1}(z)$ and $\Phi_t^{x,2}(z)$ are respectively defined in \eqref{Eq:Phi_x1} and \eqref{Eq:Phi_x2}. We see that
$$\int_0^{\overline{\beta_x}}\Phi_t^{x,1}(z)(a)da\leq \int_0^{\overline{\beta_x}}x_0(a)da=0$$
for every $t\geq 0$, which implies
$$\int_0^{\infty}\beta_x(a)\phi_t^{x,1}(z)(a)da=\int_0^{\overline{\beta_x}}\beta_x(a)\Phi_t^{x,1}(z)(a)da\leq \|\beta_x\|_{L^\infty} \int_0^{\overline{\beta_x}}\Phi_t^{x,1}(z)(a)da=0$$
for every $t\geq 0$. Hence we deduce that the function $F(t)=\int_0^\infty \beta_x(a)\Phi_t^x(z)(a)da$ satisfies
$$F(t)\leq \|\beta_x\|_{L^\infty} \int_0^t F(t-s)\Phi_{t-a}^S(z)da.$$
Then a Gronwall argument states that $F(t)=0$ for every $t\geq 0$ and we deduce from \eqref{Eq:Phi_x2} that $\Phi_t^{x,2}(z)(a)=0$ for every $t\geq 0$ and every $a\geq 0$. Consequently we get
$$\int_0^{\overline{\beta_x}} \Phi_t^x(z)(a)da=\int_0^{\overline{\beta_x}} \Phi_t^{x,1}(z)(a)da+\int_0^{\overline{\beta_x}} \Phi_t^{x,2}(z)(a)da=0$$
for every $t\geq 0$, thus $\partial \S_x$ is positively invariant. Moreover, we can deduce that
$$\int_0^\infty \Phi^x_t(z)(a)da=\int_0^{\overline{\beta_x}}\Phi^{x,1}_t(z)(a)da\leq e^{-\mu_0 t}\|x_0\|_{L^1(\R_+)}$$
for every $t\geq 0$ by using \eqref{Eq:Phi_x1} and Assumption \ref{Assump1}. Similar arguments would prove on one hand that $\partial \S_y$ is positively invariant, and on the other hand that \eqref{Eq:Phi-Decreas} holds for every $z:=(x_0,y_0,z_0)\in \partial \S_y$ and every $t\geq 0$ by using \eqref{Eq:Phi_y2} and Assumption \ref{Assump1}.
\item Let $z:=(x_0,y_0,z_0)\in \partial \S_x\cap \partial \S_y$. Using the first point, we have
$$\int_0^\infty \beta_x(a)\Phi_t^x(z)(a)da=0, \qquad \int_0^\infty \beta_y(a)\Phi_t^y(z)(a)da=0$$
for every $t\geq 0$. Consequently, from Problem \eqref{Eq:Model} we get
$$\Phi_t^S(z)=S_0 e^{-\mu_S t}+\dfrac{\Lambda}{\mu_S}(1-e^{-\mu_S t})$$
for every $t\geq 0$. Using \eqref{Eq:Phi-Decreas}, we deduce
$$\|\Phi_t(z)-E_0\|_{\X}\leq e^{-\mu_0 t}\left(\left|S_0-S^*_0\right|+\|x_0\|_{L^1(\R_+)}+\|y_0\|_{L^1(\R_+)}\right)=e^{-\mu_0 t}\left\|z-E_0\right\|_{\X}$$
which proves the second point.
\item Let $z\in \X_+$ and let $(S,x,y)\in \Co(\R_+, \X_+)$ the solution of \eqref{Eq:Model}. By Theorem \ref{Thm:Global}, we know that there exists $k>0$ (independent of $z$) such that
$$\limsup_{t\to \infty} \int_0^\infty \beta_x(a)x(t,a)da\leq k, \qquad \limsup_{t\to \infty} \int_0^\infty \beta_y(a)y(t,a)da\leq k.$$
Injecting the latter equation into \eqref{Eq:Model} implies that for every $\ep>0$, there exists $t_0>0$ such that
$$S'(t)\geq \Lambda-\mu_S S(t)-2S(t)(k+\ep)$$
for every $t\geq t_0$, whence
$$\liminf_{t\to \infty} S(t)\geq \dfrac{\Lambda}{\mu_S+2(k+\ep)}>0$$
whence the third point.
\item Let $z\in \S_x$, then there exists $0\leq b_1<b_2\leq \overline{\beta_x}$ such that 
$$\int_{b_1}^{b_2} x_0(a)da>0.$$
By Assumption \ref{Assump1}, we may find $c\in(\underline{\beta_x},\infty)$ such that $\beta_x(a)>0$ a.e. $a\in [\underline{\beta_x},c)$. Let $t_0=c-b_2$, then using \eqref{Eq:Phi_x1}, we see that
\begin{flalign*}
\int_{\underline{\beta_x}}^{c}\Phi_{t_0}^x(z)(a)da&\geq e^{-t_0 \|\mu_x\|_{L^\infty}} \int_{\underline{\beta_x}}^{c}x_0(a-t)\chi_{[t,\infty)}(a)da \\
&\geq e^{-t_0 \|\mu_x\|_{L^\infty}} \int_{b_1}^{b_2}x_0(a)da>0
\end{flalign*}
Since $\beta_x>0$ a.e. on $[\underline{\beta_x}, c]$ and $\Phi^S_t(z)>0$ for every $t>0$ due to Theorem \ref{Thm:Global}, then we get
$$\Phi_{t_0}^x(z)(0)>0$$
by using \eqref{Eq:Phi_x2}. By continuity arguments, there exists $t_1>t_0$ such that 
$$\Phi_{t}^x(z)(0)>0, \qquad \forall t\in[t_0,t_1).$$
Let $\Delta_t=t_1-t_0>0$ and let $0<\ep<<c-\underline{\beta_x}$, then we see that
\begin{flalign*}
\int_{\underline{\beta_x}}^{c}\Phi_{t_0+s}^x(z)(a)da&\geq e^{-s \|\mu_x\|_{L^\infty}}  \int_{\underline{\beta_x}}^{c}\Phi_{t_0+s-a}^x(z)(0) \chi_{[0,t_0+s]}(a)da>0
\end{flalign*}
for any $s\in[\underline{\beta_x}+\ep, t_1-t_0+c-\ep)$. Consequently we have
$$\Phi_t^x(z)(0)>0, \qquad \forall t\in[t_0+\underline{\beta_x}+\ep, t_1+c-\ep].$$
Similarly we can prove that
$$\Phi_t^x(z)(0)>0, \qquad \forall t\in\left[t_0+n\left(\underline{\beta_x}+\ep\right), t_1+n\left(c-\ep\right)\right]$$
for any $n\in \N$. Since $n(c-\underline{\beta_x}-2\ep)\xrightarrow[n \to \infty]{}\infty$, we deduce that there exists $t^*>0$ such that
$$\Phi_t^x(z)(0)>0, \qquad \forall t\in[t^*, t^*+c].$$
Let $0<\ep<c$ and $t=t^*+c+\ep$, then we get
$$\Phi_t^x(z)(0)\geq \Phi_t^S(z)\int_{\underline{\beta_x}}^{c-\ep}\beta_x(a)\Phi_{t-a}^x(z)(0)da>0.$$
Hence we deduce that
$$\Phi_t^x(z)(0)>0, \qquad \forall t\in[t^*,t^*+2c]$$
then repeating this argument we obtain
$$\int_0^\infty \beta_x(a)\Phi_t^x(z)(a)da=\Phi_t^x(z)(0)>0, \qquad \forall t\geq t^*.$$
Finally, we obtain
$$\int_0^{\overline{\beta_x}} \Phi_t^x(z)(a)da \geq e^{-\overline{\beta_x}\|\mu_x\|_{L^\infty}} \int_0^{\overline{\beta_x}}\Phi_{t-a}^x(z)(0)da>0$$
for every $t>t^*$, so that $\S_x$ is asymptotically positively invariant. The same arguments would prove the result for $y$.

\item Let $z:=(S_0, x_0, y_0)\in \partial \S_x$ and $(S,x,y)\in \Co(\R_+,\X_+)$ be the solution of \eqref{Eq:Model}. Since $\partial \S_x$ is positively invariant by the first point, then $\omega(z)\subset \partial \S_x$. Consequently we have
$$\int_0^\infty \beta_x(a)x(t,a)da=0$$
for every $t\geq 0$ and from \eqref{Eq:Phi_x1}-\eqref{Eq:Phi_x2} we get
$$\|\Phi_t^x(z)\|_{L^1(0,\infty)}\leq \|x_0\|_{L^1(0,\infty)}e^{-\mu_0 t}\xrightarrow[t\to \infty]{}0.$$
We deduce that $(S,y)$ satisfies \eqref{Eq:Model_1pop}. If $R_0^y>1$ and $z\in \S_y$, then from Proposition \ref{Prop:Magal} we obtain
$$\lim_{t\to \infty}\|\Phi_t(z)-E_2\|_{\X}\leq
\lim_{t\to \infty}\left(\left\|(\Phi_t^S(z),\Phi_t^y(z))-(S^*_2,y^*_2)\right\|_{\R\times L^1(0,\infty)}+\|\Phi_t^x(z)\|_{L^1(0,\infty)}\right)=0$$
whence $\omega(z)\subset \S_y$. If $R_0^y\leq 1$, we deduce from Proposition \ref{Prop:Magal} that 
$$\lim_{t\to \infty}\|\Phi_t(z)-E_0\|_{\X}\leq 
\lim_{t\to \infty}\left(\left\|(\Phi_t^S(z),\Phi_t^y)-\left(S^*_0,0\right)\right\|_{\R\times L^1(0,\infty)}+\left\|\Phi_t^x(z)\right\|_{L^1(0,\infty)}\right)=0.$$

\item The latter arguments would prove the case $z\in \partial \S_y$. 
\item Let $z\in \X_+$: \textbf{(a)} Suppose that $R_0^x\leq 1$. A simple upper bound on \eqref{Eq:Model} leads to
\begin{equation*}
\left\{
\begin{array}{rcl}
S'(t)&\leq&\Lambda-\mu_S S(t)-S(t) \int_0^\infty\beta_x(a)x(t,a)da, \\
\dfrac{\partial x(t,a)}{\partial t}+\dfrac{\partial x(t,a)}{\partial a}&=&-\mu_x(a)x(t,a), \\
x(t,0)&=&S(t) \int_0^\infty \beta_x(a)x(t,a)da.
\end{array}
\right.
\end{equation*}
since $\Phi_t^y(z)(a)\geq 0$ a.e. $a\geq 0$. From Proposition \ref{Prop:Magal}, we obtain
$$\lim_{t\to \infty}\|\Phi_t^x(z)\|_{L^1(\R_+)}=0$$
since $R_0^x\leq 1$, whence $\omega(z)\subset \partial \S_x$. \\
\textbf{(b)} The same argument proves the result when $R_0^y\leq 1$.

\item Let $z\in \S_x\cap \S_y$:
\textbf{(a)} Suppose that $\max\{R_0^x, R_0^y\}>1$. Without loss of generality, we can suppose that $R_0^y>1$. By continuity arguments, there exists $\overline{\beta_y}<c<\infty$ such that
$$\dfrac{\Lambda \int_0^c \beta_y(a)\pi_y(a)da}{\mu_S}>1$$
and there exists $\ep>0$ small enough such that
\begin{equation}\label{Eq:R0y_small}
\dfrac{\Lambda \int_0^c \beta_y(a)\pi_y(a)da}{\mu_S+\ep(\|\beta_x\|_{L^\infty}+\|\beta_y\|_{L^\infty})}>1.
\end{equation}
Let $\M_\ep:=\{z\in \S_x\cap \S_y, \|z-E_0\|_{\X}\leq \ep\}$. We first prove that for every $z\in \M_\ep$, there exists $\overline{t}(z)$ such that
\begin{equation}\label{Eq:Unstab}
\|\Phi_{\overline{t}}(z)-E_0\|_{\X}>\ep
\end{equation}
holds. By contradiction, suppose that there exists $z:=(S_0, x_0, y_0)\in \S_x\cap \S_y$ such that
\begin{equation}\label{Eq:Unstab_Contrad}
\|\Phi_t(z)-E_0\|_{\X}\leq \ep, \qquad \forall t\geq 0.
\end{equation}
We know by Proposition \ref{Prop:Attract} (4) that there exists $\tau\geq 0$ such that $\Phi_t(z)\in \S_x\cap \S_y$ for every $t\geq \tau$. Thus, a Gronwall argument leads to
$$S(t)\geq \dfrac{\Lambda}{\mu_S+\ep\left(\|\beta_y\|_{L^\infty}+\|\beta_x\|_{L^\infty}\right)}, \qquad \forall t\geq 0.$$
Now, we denote for convenience $y(t,a)=\Phi_t^y(z)(a)$, and we deduce from \eqref{Eq:Model} that $y$ satisfies the following system:
\begin{equation*}
\left\{
\begin{array}{rcl}
\dfrac{\partial y(t,a)}{\partial t}+\dfrac{\partial y(t,a)}{\partial a}&=&-\mu_y(a)y(t,a), \\
 y(t,0)&\geq& \dfrac{\Lambda}{\mu_S +\ep\left(\|\beta_x\|_{L^\infty}+\|\beta_y\|_{L^\infty}\right)}\displaystyle\int_0^c \beta_y(a)y(t,a)da,  \\
y(\tau,a)&=&\Phi_\tau^y(z)(a)
\end{array}
\right.
\end{equation*}
for a.e. $a\in [0,c]$ and every $t\geq \tau$. We then have $y(t,a)\geq \hat{y}(t,a)$ where $\hat{y}$ is the solution of the latter system, with an equality instead. We see that $\Phi^y_{\tau}(z)\in \S_y$ by Proposition \ref{Prop:Attract} (4) so that the function
$$(0,c)\ni a\longmapsto \Phi^y_{\tau}(z)(a)$$
belongs to $L^1_+(0,c)\setminus \{0\}$ since $c>\overline{\beta_y}$. Since \eqref{Eq:R0y_small} holds, we deduce from Lemma \ref{Lemma:Semigroup} that
$$\lim_{t\to \infty}\displaystyle \int_0^c y(t,a)da\geq \lim_{t\to \infty}\displaystyle \int_0^c \hat{y}(t,a)da=\infty$$
which contradicts \eqref{Eq:Unstab_Contrad}, whence \eqref{Eq:Unstab} is proved. Therefore we obtain
\begin{equation}\label{Eq:Intersc=empty}
\{z\in \S_x\cap \S_y, \lim_{t\to \infty}\Phi_t(z)=E_0\}=\emptyset.
\end{equation}
Now, consider $z\in \S_x\cap \S_y$, and suppose by contradiction that there exists $w\in \omega(z)\cap \partial \S_x\cap \partial \S_y$. The invariance of $\omega(z)$ (due to Lemma \ref{Lemme:OmegaLimit}) then gives $\omega(w)=\omega(z)$ and so
$$d(\omega(z), E_0)\leq d(\omega(w), \Phi_t(w))+d(\Phi_t(w),E_0), \qquad \forall t\geq 0.$$
A consequence of Lemma \ref{Lemme:OmegaLimit} and Proposition \ref{Prop:Attract} (2), is that $d(\omega(z),E_0)=0$ and so $\{E_0\}\subset \omega(z)$ which contradicts \eqref{Eq:Intersc=empty}, whence $\omega(z)\subset \S_x\cup \S_y$ for any $z\in \S_x\cap \S_y$. \\
\textbf{(b)} Suppose that $R_0^x>\max\{1,R_0^y\}$. First suppose that $R_0^y\leq 1$, then using Proposition \ref{Prop:Attract} (8.a) and (7.b), we deduce that $\omega(z)\subset (\S_x \cup \S_y)\cap \partial \S_y=\S_x\cap \partial \S_y\subset \S_x$. Now suppose that $R_0^x>R_0^y>1$. We see that $r_x>r_y$, so we can consider $\ep>0$ small enough such that $r_x(1/r_y -\ep)>1$. We then define the set
$$\M_\ep:=\{(S_0,x_0,y_0)\in \S_x\cap \S_y, \|x_0\|_{L^1(\R_+)}\leq \ep\}$$
and we aim to prove that for every $z\in \M_\ep$, there exists $\overline{t}(z)$ such that
\begin{equation}\label{Eq:UnstabX}
\|\Phi_{\overline{t}}^x(z)\|_{L^1(\R_+)}>\ep.
\end{equation}
By contradiction, suppose that there exists $z:=(S_0,x_0,y_0)$ such that
\begin{equation}\label{Eq:UnstabX_Contrad}
\|\Phi_{t}^x(z)\|_{L^1(\R_+)}\leq \ep, \qquad \forall t\geq 0.
\end{equation}
Denoting $(\Phi^S_t(z),\Phi^x_t(z), \Phi^y_t(z))=(S,x,y)$ for notational simplicity, we deduce from \eqref{Eq:Model} that $S$ satisfies the following inequalities
$$S'(t)\leq \Lambda-\mu_S S(t)-S(t)\int_0^\infty \beta_y(a)y(t,a)da$$	
and
$$S'(t)\geq \Lambda-(\mu_S+\ep\|\beta_x\|_{L^\infty}) S(t)-S(t)\int_0^\infty \beta_y(a)y(t,a)da.$$
Consider the following models
\begin{equation*}
\left\{
\begin{array}{rcl}
S'(t)&=&\Lambda-\mu_S S(t)-S(t) \int_0^\infty\beta_x(a)x(t,a)da, \\
\dfrac{\partial y(t,a)}{\partial t}+\dfrac{\partial  y(t,a)}{\partial a}&=&-\mu_y(a)y(t,a), \\
y(t,0)&=&S(t) \int_0^\infty \beta_y(a)y(t,a)da
\end{array}
\right.
\end{equation*}
and
\begin{equation*}
\left\{
\begin{array}{rcl}
S'(t)&=&\Lambda-(\mu_S+\ep\|\beta_x\|_{L^\infty}) S(t)-S(t) \int_0^\infty\beta_x(a)x(t,a)da, \\
\dfrac{\partial y(t,a)}{\partial t}+\dfrac{\partial  y(t,a)}{\partial a}&=&-\mu_y(a)y(t,a), \\
y(t,0)&=&S(t) \int_0^\infty \beta_y(a)y(t,a)da
\end{array}
\right.
\end{equation*}
then using Proposition \ref{Prop:Magal}, we deduce that
$$S(t)\xrightarrow[t\to \infty]{}\dfrac{1}{r_y}$$  
where $r_y$ is defined in Section \ref{Sec:Intro}. Consequently, there exists $\tilde{t}\geq 0$ such that for every $t\geq \tilde{t}$, we have
$$S(t)\geq \dfrac{1}{r_y}-\ep.$$
By definition of $\ep$ and by continuity arguments, there exists $\overline{\beta_x}<c<\infty$ such that
\begin{equation}\label{Eq:R0x_small}
\left(\dfrac{1}{r_y}-\ep\right)\displaystyle \int_0^c \beta_x(a)\pi_x(a)da>1.
\end{equation}
From \eqref{Eq:Model}, we deduce that $x$ satisfies:
\begin{equation*}
\left\{
\begin{array}{rcl}
\dfrac{\partial x(t,a)}{\partial t}+\dfrac{\partial  x(t,a)}{\partial a}&=&-\mu_x(a) x(t,a), \vspace{0.1cm} \\
x(t,0)&\geq& \left(\dfrac{1}{r_y}-\ep\right)\displaystyle \int_0^c \beta_y(a)y(t,a)da, \\
x(\tilde{t},a)&=&\Phi^x_{\tilde{t}}(z)
\end{array}
\right.
\end{equation*}
for every $a\in[0,c]$ and every $t\geq \tilde{t}$.  We then have $x(t,a)\geq \hat{x}(t,a)$ where $\hat{x}$ is the solution of the latter system, with an equality instead of the inequality, for every $t\geq \tau$ and a.e. $a\in[0,c]$. We see that the function 
$$(0,c)\ni a\longmapsto \Phi^x_{\tilde{t}}(z)(a)$$
belongs to $L^1_+(0,c)\setminus \{0\}$ since $\Phi_{\tilde{t}}(z)\in \S_x$ from Proposition \ref{Prop:Attract} (4). Since \eqref{Eq:R0x_small} holds, we deduce by Lemma \ref{Lemma:Semigroup} that
$$\lim_{t\to \infty}\displaystyle \int_0^c x(t,a)da\geq \lim_{t\to \infty}\int_0^c \hat{x}(t,a)da=\infty$$
which contradicts \eqref{Eq:UnstabX_Contrad}, hence \eqref{Eq:UnstabX} holds. We deduce that
\begin{equation}\label{Eq:InterscX=empty}
\left\{z\in \S_x\cap \S_y: \lim_{t\to \infty}\|\Phi_t^x(z)\|_{L^1(\R_+)}=0\right\}=\emptyset.
\end{equation}
Let $z\in \S_x\cap \S_y$ and suppose that there exists $w\in \omega(z)\cap \partial \S_x$, then 
$$d(\omega(z), E_2)\leq d(\omega(w), \Phi_t(w))+d(\Phi_t(w), E_2)=0$$
by Lemma \ref{Lemme:OmegaLimit} and Proposition \ref{Prop:Attract} (5.a), whence $\{E_2\}\subset \omega(z)$ which contradicts \eqref{Eq:InterscX=empty}. Consequently we have $\omega(z)\subset \S_x$ for any $z\in \S_x\cap\S_y$. \\
\textbf{(c)} The latter argument proves that whenever $R^y_0>\max\{1, R_0^x\}$, then $\omega(z)\subset \S_y$ for any $z\in \S_x\cap \S_y$.
\end{enumerate}
\end{proof}

\begin{remark}
We can note that to prove the item 4 of the latter proposition, we may not need to assume the item 2 of Assumption \ref{Assump1}: namely the existence of $\underline{\beta_x}$ and $\underline{\beta_y}$. Indeed, we can make use of irreducible operators to prove the statement, as in \cite[Lemma 5.1]{MagClusk2013}. However we still make the assumption, since the sketch of proof would be tedious and not add much to the result.
\end{remark}

\section{Global analysis} \label{Sec:Global}

In this section, we aim to prove that the equilibria defined in Section \ref{Sec:Intro}, satisfy a global stability property. To this end, we use Lyapunov functionals.

\subsection{Lyapunov functionals} \label{Sec:Lyap}

We define
$$L_0:z\longmapsto S^*_0 g\left(\dfrac{S}{S^*_0}\right)+\int_0^\infty \Psi_x(a)x(a)da+\int_0^\infty \Psi_y(a)y(a)da$$
$$L_x:z\longmapsto S^*_1 g\left(\dfrac{S}{S^*_1}\right)+\int_0^\infty \Psi_x(a)x^*_1(a)g\left(\dfrac{x(a)}{x^*_1(a)}\right)da+\int_0^\infty \Psi_y(a) y(a)da;$$
$$L_y:z\longmapsto S^*_2 g\left(\dfrac{S}{S^*_2}\right)+\int_0^\infty \Psi_x(a) x(a)da+\int_0^\infty \Psi_y(a)y^*_2(a)g\left(\dfrac{y(a)}{y^*_2(a)}\right)da$$
for any $z=(S,x,y)\in \X$, where $\Psi_x\in L^\infty_+(0,\infty)$ and $\Psi_y\in L^\infty_+(0,\infty)$ are defined by
$$\Psi_x(a)=\dfrac{1}{r_x}\int_a^\infty \beta_x(s)e^{-\int_a^s \mu_x(\xi)d\xi}ds, \qquad \Psi_y(a)=\dfrac{1}{r_y}\int_a^\infty \beta_y(s)e^{-\int_a^s \mu_y(\xi)d\xi}ds$$
for every $a\geq 0$, and we remind that the other parameters are defined in Section \ref{Sec:Intro}. We first start with a well-posedness result:

\begin{proposition}\label{Prop:Lyapunov_Defined}\mbox{}
\begin{enumerate}
\item The function $(t,z)\mapsto L_0(\Phi_t(z))$ is well-defined on $\R\times (\R^*\times L^1_+(0,\infty)\times L^1_+(0,\infty))$;
\item for every $z\in \S_x$, the function $(t,v)\mapsto L_x(\Phi_t(v))$ is well-defined on $\R_+\times \omega(z)$ whenever $R^x_0 >\max\{1,R^y_0\}$;
\item for every $z\in \S_y$, the function $(t,v)\mapsto L_y(\Phi_t(v))$ is well-defined on $\R_+\times \omega(z)$ whenever $R^y_0 >\max\{1,R^x_0\}$;
\item let $z\in \S_x\cap \S_y$ and suppose that $R^x_0 =R^y_0 >1$. If $\omega(z)\subset \S_x$, then the function $(t,v)\mapsto L_x(\Phi_t(v))$ is well-defined on $\R_+\times \omega(z)$, if $\omega(z)\subset \S_y$, then the function $(t,v)\mapsto L_y(\Phi_t(v))$ is well-defined on $\R_+\times \omega(z)$.
\end{enumerate}
\end{proposition}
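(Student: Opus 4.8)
The plan is to treat the affine functional $L_0$ separately from the genuinely nonlinear Volterra terms in $L_x$ and $L_y$. For item~(1), the only term of $L_0$ that can fail to be finite is $S^*_0 g(S/S^*_0)$, which is finite as soon as $S>0$; since $\Phi_t^S(z)>0$ for every $t>0$ by Theorem~\ref{Thm:Global}, this causes no trouble along the semiflow. The two remaining terms are affine: because $\Psi_x,\Psi_y\in L^\infty_+(0,\infty)$ and $x,y\in L^1_+(0,\infty)$, the bounds $\int_0^\infty \Psi_x(a)x(a)\,da\leq \|\Psi_x\|_{L^\infty}\|x\|_{L^1}$ and its analogue for $y$ show both integrals are finite. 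Hence $L_0(\Phi_t(z))$ is finite on the stated domain.

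For items~(2)--(4) the crux is the Volterra term
$$\int_0^\infty \Psi_x(a)\,x^*_1(a)\,g\!\left(\frac{x(a)}{x^*_1(a)}\right)da$$
(and its analogue for $L_y$, with $x,x^*_1$ replaced by $y,y^*_2$). First I would note that $g\geq 0$ on $\R^*_+$, so the integral is always well-defined as an element of $[0,\infty]$; the real task is finiteness, and the only possible source of divergence is the logarithmic part of $g$, which blows up where $x(a)/x^*_1(a)$ approaches $0$. The key simplification comes from restricting to $v\in\omega(z)$ and exploiting the explicit form \eqref{Eq:Phi_x1}--\eqref{Eq:Phi_x2}: since $x^*_1(a)=\frac{\mu_S(R^x_0-1)}{r_x}\pi_x(a)$ and, on the complete orbit through any point of the invariant set $\omega(z)$, the initial-data contribution $\Phi_t^{x,1}$ has been flushed out so that the reproduced part factorises as (birth rate)$\times\pi_x(a)$, the ratio $x(a)/x^*_1(a)$ becomes \emph{independent of the age} $a$ and equals a constant multiple of the instantaneous birth rate $\Phi_\cdot^S(v)\int_0^\infty\beta_x\,\Phi_\cdot^x(v)$.

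It then suffices to bound this birth rate above and below by positive constants, uniformly over $\omega(z)$. The upper bound follows from the asymptotic boundedness in Theorem~\ref{Thm:Global}. For the lower bound I would use uniform persistence: Proposition~\ref{Prop:Attract}(3) gives $\liminf_{t\to\infty}\Phi_t^S(z)\geq c>0$, while under $R^x_0>\max\{1,R^y_0\}$ (resp. under the hypothesis $\omega(z)\subset\S_x$ in item~(4)) the items~(6) and~(8) of Proposition~\ref{Prop:Attract} give $\omega(z)\subset\S_x$, and Proposition~\ref{Prop:Attract}(4) makes the birth rate eventually strictly positive. Upgrading this to a \emph{uniform} positive lower bound on $\omega(z)$, so that $m\leq x(a)/x^*_1(a)\leq M$ for some $0<m\leq M$, a.e. $a$ and every $v\in\omega(z)$, yields $g(x(a)/x^*_1(a))\leq \max\{g(m),g(M)\}$, whence
$$\int_0^\infty \Psi_x(a)\,x^*_1(a)\,g\!\left(\frac{x(a)}{x^*_1(a)}\right)da\leq \|\Psi_x\|_{L^\infty}\max\{g(m),g(M)\}\int_0^\infty x^*_1(a)\,da<\infty,$$
the last integral being finite because $x^*_1\propto\pi_x\leq e^{-\mu_0 a}$. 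The $S$-term is finite since $S\geq c>0$ on $\omega(z)$, and the affine $\int\Psi_y\,y$ term is finite as in item~(1). The argument for $L_y$ is entirely symmetric.

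The main obstacle is precisely this last upgrade: turning the \emph{eventual} and \emph{pointwise} positivity of the birth rate into a bound that is uniform in both $a$ and $v\in\omega(z)$. This is where the compactness and invariance of $\omega(z)$ from Lemma~\ref{Lemme:OmegaLimit} are essential: if the infimum of the (continuous) birth-rate functional over $\omega(z)$ were zero, it would be attained at some point of $\omega(z)$ whose backward orbit would have to degenerate into $\partial\S_x$, contradicting $\omega(z)\subset\S_x$ together with the persistence statement of Proposition~\ref{Prop:Attract}(4).
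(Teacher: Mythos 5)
Your overall strategy is essentially the paper's: reduce everything to a uniform control of the ratio $\Phi_t^x(v)(a)/x^*_1(a)$ over $v\in\omega(z)$ and $a\geq 0$, obtained by combining the Duhamel formula \eqref{Eq:Phi_x2}, the persistence statements of Proposition~\ref{Prop:Attract}, and the compactness and invariance of $\omega(z)$ (including the passage to a full orbit through each point of $\omega(z)$ to treat ages $a$ exceeding the elapsed time). Item (1) is handled identically. One intermediate assertion is wrong, though harmless: the ratio $x(a)/x^*_1(a)$ is \emph{not} independent of $a$ on $\omega(z)$ and does not equal a multiple of the \emph{instantaneous} birth rate; rather $v^x(a)=u_v(-a)^S\bigl(\int_0^\infty\beta_x(s)u_v(-a)^x(s)ds\bigr)\pi_x(a)$, so the ratio at age $a$ is the (normalised) birth rate evaluated at time $-a$ along the backward orbit. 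Since $u_v(-a)$ stays in the compact invariant set $\omega(z)$, the two-sided bound $m\leq x(a)/x^*_1(a)\leq M$ you need still follows from the continuity and strict positivity of the birth-rate functional on $\omega(z)$, so the argument survives once this is corrected. The remaining difference is cosmetic: you dominate the integrand by $\|\Psi_x\|_{L^\infty}\max\{g(m),g(M)\}x^*_1(a)$, which requires both bounds, whereas the paper needs only the lower bound $m$, using $\ln r\leq r-1$ to get $x^*_1(a)g\bigl(x(a)/x^*_1(a)\bigr)\leq x(a)\bigl(x^*_1(a)/x(a)-1\bigr)^2\leq c(z)x(a)$ and then the $L^1$-integrability of $x$. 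Both dominations are legitimate here; the paper's is slightly more economical since the upper bound $M$ need not be produced.
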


\begin{proof}\mbox{}
\begin{enumerate}
\item By Theorem \ref{Thm:Global}, we know that the semiflow $\Phi_t$ is positive, and that $\Phi_t^S>0$ for every $t>0$, so it proves the first point.
\item Suppose that $R_0^x>\max\{1,R_0^y\}$ and let $z\in \S_x$. Either $z\in \partial \S_y$, so from Proposition \ref{Prop:Attract} (6.a), we deduce that $\omega(z)\subset \S_x\cap \partial \S_y$, or $z\in \S_y$ and we deduce from Proposition \ref{Prop:Attract} (8.b) that $\omega(z)\subset \S_x$. Moreover, Proposition \ref{Prop:Attract} (3) ensures us that
$$S^*_1 g\left(\dfrac{\Phi^S_t(v)}{S^*_1}\right)$$
is well-defined for every $t\geq 0$ and every $v\in \omega(z)$. We now prove that there exists a positive constant $c(z)>0$, such that
\begin{equation}\label{Eq:Lyap_Defined}
0\leq x^*_1(a)g\left(\dfrac{\Phi_t^x(v)(a)}{x^*_1(a)}\right)\leq c(z)\Phi_t^x(v)(a)
\end{equation}
for every $a\geq 0$, $t\geq 0$ and $v\in \omega(z)$. Following \cite[Proposition 2]{Perasso2019}, we note that the definition of the function $g$ (in \eqref{Eq:Fct_g}), implies that the following inequality holds:
$$\ln(r)\leq r-1, \qquad \forall r>0.$$
Let $t\geq 0$ and $v\in \omega(z)$, then we deduce that the middle term of \eqref{Eq:Lyap_Defined} is given by
\begin{flalign*}
x^*_1(a)g\left(\dfrac{\Phi_t^x(v)(a)}{x^*_1(a)}\right)&=x^*_1(a)\left(\dfrac{\Phi_t^x(v)(a)}{x^*_1(a)}+\ln\left(\dfrac{x^*_1(a)}{\Phi_t^x(v)(a)}\right)-1\right) \\
&\leq x^*_1(a)\left(\dfrac{\Phi_t^x(v)(a)}{x^*_1(a)}+\dfrac{x^*_1(a)}{\Phi_t^x(v)(a)}-2\right)\\
&=\Phi_t^x(v)(a)\left(\dfrac{x^*_1(a)}{\Phi_t^x(v)(a)}-1\right)^2.
\end{flalign*}
Thus, to prove \eqref{Eq:Lyap_Defined}, it suffices to prove that there exists a constant $c(z)$, such that
\begin{equation}\label{Eq:Lyap_Defined2}
\left(\dfrac{x^*_1(a)}{\Phi_t^x(v)(a)}-1\right)^2\leq c(z), \qquad \forall a\geq 0.
\end{equation}
for every $t\geq 0$ and every $v\in \omega(z)$. From Proposition \ref{Prop:Attract} (4), we know that there exists $\tau\geq 0$ such that
$$\int_0^\infty \beta_x(a)\Phi_t^x(v)(a)da>0$$
for every $t\geq \tau$ and every $v\in \omega(z)$. Let $v=(v^S, v^x, v^y) \in \omega(z)$. The invariance of $\omega(z)$ under the semiflow implies that for every $\overline{t}\geq \tau$, there exists $u\in\omega(z)$ such that
$$v=\Phi_{\overline{t}}(u).$$
We deduce that
$$\int_0^\infty \beta_x(a)v^x(a)da=\int_0^\infty \beta_x(a)\Phi_{\overline{t}}^x(u)(a)da>0.$$
Since $\omega(z)$ is compact (by Lemma \ref{Lemme:OmegaLimit}), then a continuity argument ensures us with the existence of a constant $c(z)$ (independent of $v$) such that
$$\int_0^\infty \beta_x(a)v^x(a)da\geq c(z)$$
for any $v\in \omega(z)$. Since $\Phi_t(\omega(z))\subset \omega(z)$ for all $t\geq 0$, then we get
\begin{equation}\label{Eq:Lyap_Defined3}
\int_0^\infty \beta_x(a)\Phi_t^x(v)(a)da\geq c(z), \quad \forall t\geq 0, \quad \forall v\in \omega(z).
\end{equation}
Suppose that $(t,a)\in (\R_+)^2$ such that $t>a$. From \eqref{Eq:Phi_x2}, \eqref{Eq:Lyap_Defined3} and Proposition \ref{Prop:Attract} (3), we know that there exist two constants $\delta>0$ and $c(z)>0$ such that
$$\Phi_t^x(v)(a)\geq \delta c(z)\pi_x(a)$$
for every $v\in \omega(z)$. By definition of $x^*_1$ (see Section \ref{Sec:Intro}), we see that
\begin{equation}\label{Eq:Lyap_Defined4}
\dfrac{\Phi_t^x(v)(a)}{x^*_1(a)}\geq \dfrac{\delta c(z)r_x}{\mu_S(R^x_0-1)}=:k(z)>0
\end{equation}
for every $v\in \omega(z)$, and consequently
$$\left(\dfrac{x^*_1(a)}{\Phi_t^x(v)(a)}-1\right)^2\leq \dfrac{1}{k(z)^2}+1+\dfrac{2}{k(z)}<\infty$$
which proves \eqref{Eq:Lyap_Defined2} for any $v\in \omega(z)$ and every $(t,a)\in (\R_+)^2$ such that $t>a$. Now, suppose that $a\geq t$. Since $\omega(z)\subset \S_x$ is invariant under the semiflow, then using \cite[p. 26]{Smith95}, we deduce that for any $v\in \omega(z)$, there exists a full orbit $\xi\longmapsto u_v(\xi)$, for every $\xi\in \R$, passing through $v$, \textit{i.e.} satisfying:
\begin{equation*}
\left\{
\begin{array}{lll}
u_v(\xi)\in \omega(z), \quad \forall \xi\in \R, \\
u_v(0)=v, \\
\Phi_\xi(u_v(s))=u_v(\xi+s), \quad \forall (\xi,s)\in \R_+\times \R.
\end{array} 
\right.
\end{equation*}
It then suffices to consider $s\in \R$, such that $t+s>a$. Since $u_v(-s)\in \omega(z)$, we deduce from \eqref{Eq:Lyap_Defined4} that
$$\dfrac{x^*_1(a)}{\Phi_t^x(v)(a)}=\dfrac{x^*_1(a)}{\Phi_t^x(u_v(0))(a)}=\dfrac{x^*_1(a)}{\Phi_{t+s}^x(u_v(-s))(a)}\leq \dfrac{1}{k(z)}$$
which proves \eqref{Eq:Lyap_Defined2} for any $v\in \omega(z)$ and every $(t,a)\in (\R_+)^2$ such that $a\geq t$. We have then proved that \eqref{Eq:Lyap_Defined2} (and consequently \eqref{Eq:Lyap_Defined}) holds for every $(t,a,v)\in \R_+\times \R_+\times \omega(z)$. Finally, the integrability on $\R_+$ of the functions
$$a\longmapsto \Psi_x(a)\Phi_t^x(u)(a), \quad \forall (t,u)\in \R_+\times \omega(z)$$
and 
$$a\longmapsto \Psi_y(a)\Phi_t^y(u)(a), \quad \forall (t,u)\in \R_+\times \omega(z)$$
imply (by using \eqref{Eq:Lyap_Defined}) that $(t,v)\longmapsto L_x(\Phi_t(v))$ is well-defined on $\R_+\times \omega(z)$ for every $z\in \S_x$. 

\item Suppose that $R_0^y>\max\{1,R_0^x\}$ and let $z\in \S_y$. Either $z\in \partial \S_x$, so we see that $\omega(z)\subset \partial \S_x\cap \S_y$ by using Proposition \ref{Prop:Attract} (5.a), or $z\in \S_x$ and we deduce from Proposition \ref{Prop:Attract} (8.c) that $\omega(z)\subset \S_y$. Using Proposition \ref{Prop:Attract} (3), we see that 
$$S^*_2g\left(\dfrac{\Phi_t^S(v)}{S^*_2}\right)$$
is well-defined for every $t\geq 0$ and every $v\in \omega(z)$. Similar computations as for proving \eqref{Eq:Lyap_Defined} imply that there exists a positive constant $c(z)>0$ such that
$$0\leq y^*_2(a)g\left(\dfrac{\Phi_t^y(v)(a)}{y^*_2(a)}\right)\leq c(z)\Phi_t^y(v)(a)$$
for every $a\geq 0, t\geq 0$ and $v\in \omega(z)$. Finally we prove as above that the function $(t,v)\longmapsto L_y(\Phi_t(v))$ is well-defined on $\R_+\times \omega(z)$ for every $z\in \S_y$.

\item Suppose now that $R_0^x=R_0^y>1$. From Proposition \ref{Prop:Attract} (8.a), we know that $\omega(z)\subset \S_x\cup \S_y$. Consequently, either $\omega(z)\subset \S_x$ and we use the first point, to prove that the function $(t,v)\longmapsto L_x(\Phi_t(v))$ is well-defined on $\R_+\times \omega(z)$, or $\omega(z)\subset \S_y$ and we use the second point, to prove that the function $(t,v)\longmapsto L_y(\Phi_t(v))$ is well-defined on $\R_+\times \omega(z)$.
\end{enumerate}
\end{proof}
We remind the following definition:
\begin{definition}
Let $S\subset \X$. A function $L:\X\to \R$ is called a Lyapunov function if there hold that:
\begin{itemize}
\item $L$ is continuous on $\overline{S}$ (the closure of $S$ in $\X$);
\item the function $\R_+\ni t\longmapsto L(\Phi_t(z))$ is non-increasing for every $z\in S$.
\end{itemize}
\end{definition}
We now show that $L_0, L_x$ and $L_y$ are Lyapunov functionals. 

\begin{proposition}\label{Prop:Lyapunov}
The following hold:
\begin{enumerate}
\item if $\max\{R_0^x, R_0^y\}\leq 1$, then $L_0$ is a Lyapunov function on $\R^*\times L^1_+(0,\infty)\times L^1_+(0,\infty)$. Moreover, if $R_0^x\leq 1$ (resp. $R_0^y\leq 1$), then $L_0$ is a Lyapunov function on $(\R^*\times L^1_+(0,\infty)\times L^1_+(0,\infty))\cap \partial \S_y$ (resp. on $(\R^*\times L^1_+(0,\infty)\times L^1_+(0,\infty))\cap \partial \S_x$);
\item if $R^x_0 >\max\{1,R^y_0\}$ then $L_x$ is a Lyapunov function on $\omega(z)$ for every $z\in \S_x$;
\item if $R^y_0>\max\{1,R^x_0\}$ then $L_y$ is a Lyapunov function on $\omega(z)$ for every $z\in \S_y$;
\item if $R^x_0 =R^y_0 >1$, then $L_x$ is a Lyapunov function on $\omega(z)$ for every $z\in \S_x\cap \S_y$ such that $\omega(z)\subset \S_x$. Moreover, $L_y$ is a Lyapunov function on $\omega(z)$ for every $z\in \S_x\cap \S_y$ such that $\omega(z)\subset \S_y$.
\end{enumerate}
\end{proposition}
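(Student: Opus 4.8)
The plan is to verify the two defining properties of a Lyapunov function; continuity on the relevant closures is the easy half. For $L_0$ the map $S\mapsto S^*_0 g(S/S^*_0)$ is continuous on $(0,\infty)$ and the two \emph{linear} integral terms are continuous in the $L^1$-topology since $\Psi_x,\Psi_y\in L^\infty_+$, while for $L_x,L_y$ continuity on $\omega(z)$ follows from the well-definedness and the uniform lower bounds already obtained in Proposition \ref{Prop:Lyapunov_Defined}. The whole difficulty thus lies in showing that $t\mapsto L(\Phi_t(z))$ is non-increasing, which I would establish by differentiating along the semiflow. Before that I would record two elementary identities for the weights: $\Psi_x(0)=\frac1{r_x}\int_0^\infty\beta_x\pi_x\,da=1$ (and $\Psi_y(0)=1$), the compact support of $\Psi_x$ in $[0,\overline{\beta_x}]$, and, by differentiating the defining integral,
$$\Psi_x'(a)=\mu_x(a)\Psi_x(a)-\frac{\beta_x(a)}{r_x},\qquad \Psi_y'(a)=\mu_y(a)\Psi_y(a)-\frac{\beta_y(a)}{r_y}.$$

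For $L_0$ I would differentiate the three summands separately. Writing $J_x=\int_0^\infty\beta_x x\,da$ and $J_y=\int_0^\infty\beta_y y\,da$, the $S$-term gives $(1-S^*_0/S)S'$; using $\Lambda=\mu_S S^*_0$ this is the non-positive square $-\frac{\mu_S}{S}(S-S^*_0)^2$ plus the cross term $(S^*_0-S)(J_x+J_y)$. For each linear integral, the transport equation, one integration by parts (the boundary term at $a=\infty$ vanishing since $\Psi_x$ vanishes beyond $\overline{\beta_x}$, and the one at $a=0$ equal to $\Psi_x(0)x(t,0)=S J_x$), and the identity for $\Psi_x'$ make the $\mu_x\Psi_x x$ terms cancel, leaving $\frac{d}{dt}\int_0^\infty\Psi_x x\,da=J_x(S-\tfrac1{r_x})$, and likewise for $y$. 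Collecting everything, the $SJ_x$ and $SJ_y$ contributions cancel and, using $S^*_0-\frac1{r_x}=\frac{R^x_0-1}{r_x}$, one is left with
$$\frac{d}{dt}L_0(\Phi_t(z))=-\frac{\mu_S}{S}(S-S^*_0)^2+J_x\frac{R^x_0-1}{r_x}+J_y\frac{R^y_0-1}{r_y},$$
which is $\le0$ as soon as $\max\{R^x_0,R^y_0\}\le1$ since $J_x,J_y\ge0$. For the refined claims, on $\partial\S_y$ one has $J_y\equiv0$ by Proposition \ref{Prop:Attract} (1), so only $R^x_0\le1$ is needed, and symmetrically on $\partial\S_x$.

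For $L_x$ (the case $L_y$ being symmetric) the new ingredient is the Volterra term $G(t,a):=x^*_1(a)g(x(t,a)/x^*_1(a))$. Using $g'(r)=1-1/r$ and the profile relation $x_1^{*\prime}=-\mu_x x^*_1$, a direct computation gives the transport identity $\partial_tG+\partial_aG=-\mu_xG$, so $G$ solves the same equation as $x$. Integrating $\Psi_x\partial_tG$ by parts as above and invoking $\Psi_x'$ once more cancels the $\mu_x$ contributions and leaves $x^*_1(0)g(x(t,0)/x^*_1(0))-\frac1{r_x}\int_0^\infty\beta_x x^*_1 g(x/x^*_1)\,da$. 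Setting $J_x^*:=\int_0^\infty\beta_x x^*_1\,da=\mu_S(R^x_0-1)$, so that $x^*_1(0)=S^*_1J_x^*$ and $\Lambda=\mu_S S^*_1+S^*_1J_x^*$, I would expand every $g$, add the $S$-term and the linear $y$-term $J_y(S-\tfrac1{r_y})$. After the substantial but routine cancellation of all $SJ_x$, $SJ_y$, $S^*_1J_x$ and constant terms, the expression collapses to
$$-\frac{\mu_S}{S}(S-S^*_1)^2-x^*_1(0)\,g\!\left(\frac{S^*_1}{S}\right)+S^*_1\!\left(\int_0^\infty\beta_x x^*_1\ln\frac{x}{x^*_1}\,da-J_x^*\ln\frac{J_x}{J_x^*}\right)+J_y\!\left(\frac1{r_x}-\frac1{r_y}\right).$$
The first two terms are $\le0$; the bracket is $\le0$ by Jensen's inequality applied to the concave function $\ln$ against the probability measure $d\nu=\beta_x x^*_1\,da/J_x^*$, for which $\int(x/x^*_1)\,d\nu=J_x/J_x^*$; and the last term is $\le0$ because $R^x_0>\max\{1,R^y_0\}$ forces either $J_y\equiv0$ on $\omega(z)\subset\partial\S_y$ (Proposition \ref{Prop:Attract} (6.a)) or $r_x>r_y$, i.e. $\frac1{r_x}<\frac1{r_y}$. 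In the borderline case $R^x_0=R^y_0>1$ one has $r_x=r_y$, so the last term vanishes identically and the same computation yields $\frac{d}{dt}L_x\le0$ on any $\omega(z)\subset\S_x$, and symmetrically for $L_y$.

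The main obstacle is not the algebra but the rigorous justification of differentiating $t\mapsto L(\Phi_t(z))$ and of the integrations by parts, since along the semiflow the solution is only a mild solution. I would handle this exactly as in Proposition \ref{Prop:Lyapunov_Defined} by working on the $\omega$-limit set, where every point lies on a complete orbit and the explicit representation \eqref{Eq:Phi_x2}--\eqref{Eq:Phi_y2} together with the uniform lower bounds \eqref{Eq:Lyap_Defined3}--\eqref{Eq:Lyap_Defined4} provide the regularity and an integrable domination licensing differentiation under the integral and the discarding of boundary terms; the positivity $\Phi_t^S>0$ and $\liminf_{t\to\infty}\Phi_t^S\ge c>0$ from Proposition \ref{Prop:Attract} (3) keep all logarithms and quotients well-defined throughout.
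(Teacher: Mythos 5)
Your proposal is correct and follows essentially the same route as the paper: differentiation along the flow, the adjoint identities $\Psi_k'=\mu_k\Psi_k-\beta_k/r_k$ with $\Psi_k(0)=1$, integration by parts, and the transport identity for the Volterra term $x_1^*g(x/x_1^*)$, leading to the same final expressions. Your Jensen-inequality step is in fact identical to the paper's formulation, since the bracket $\int_0^\infty\beta_x x_1^*\ln(x/x_1^*)\,da-J_x^*\ln(J_x/J_x^*)$ equals $-\int_0^\infty\beta_x x_1^*\,g\bigl(xJ_x^*/(x_1^*J_x)\bigr)\,da$, whose non-positivity the paper reads off from $g\geq 0$ pointwise.
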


\begin{proof}\mbox{}
\begin{enumerate}
\item \textbf{(a)} Suppose that $\max\{R_0^x, R_0^y\}\leq 1$ and let $z\in\R^*\times L^1_+(0,\infty)\times L^1_+(0,\infty)$. By Proposition \ref{Prop:Lyapunov_Defined} (1), we know that $L_0(\Phi_t(z))$ is well-defined for every $t\geq 0$ and $L_0$ is continuous. We denote by $(S,x,y)$ the solution of \eqref{Eq:Model}. We now proceed in the differentiation of $L_0$ w.r.t. $t$ along \eqref{Eq:Model}. First, we see that
\begin{flalign*}
&\dfrac{\partial L_0((\Phi_t(v)))}{\partial t}\\
=& \left(1-\dfrac{S^*_0}{S(t)}\right)S'(t)+\int_0^\infty \Psi_x(a)\dfrac{\partial x(t,a)}{\partial t}da+\int_0^\infty \Psi_y(a)\dfrac{\partial y(t,a)}{\partial t}da \\
=& -\dfrac{(\Lambda-\mu_S S(t))^2}{\mu_S S(t)}-\left(1-\dfrac{S^*_0}{S(t)}\right)\left(S(t)\int_0^\infty \beta_x(a)x(t,a)da+S(t)\int_0^\infty \beta_y(a)y(t,a)da\right)\\
&-\int_0^\infty \Psi_x(a)\left(\dfrac{\partial x(t,a)}{\partial a}+\mu_x(a)x(t,a)\right)da-\int_0^\infty \Psi_y(a)\left(\dfrac{\partial y(t,a)}{\partial a}+\mu_y(a)y(t,a)\right).
\end{flalign*}
We note that
\begin{equation}\label{Eq:sigma_computations}
\Psi_x(0)=\Psi_y(0)=1, \quad \Psi_x(\infty)=\Psi_y(\infty)=0, \quad \Psi_x'=\mu_x \Psi_x-\dfrac{\beta_x}{r_x}, \quad \Psi_y'=\mu_y \Psi_y-\dfrac{\beta_y}{r_y}
\end{equation}
so after integrations by parts, we get
\begin{equation*}\label{Eq:sigma_x_comput}
\int_0^\infty \Psi_x(a)\left(\dfrac{\partial x(t,a)}{\partial a}+\mu_x(a)x(t,a)\right)da=S(t)\int_0^\infty \beta_x(a)x(t,a)da-\dfrac{1}{r_x}\int_0^\infty \beta_x(a)x(t,a)da
\end{equation*}
and 
\begin{equation}\label{Eq:sigma_y_comput}
\int_0^\infty \Psi_y(a)\left(\dfrac{\partial y(t,a)}{\partial a}+\mu_y(a)y(t,a)\right)da=S(t)\int_0^\infty \beta_y(a)y(t,a)da-\dfrac{1}{r_y}\int_0^\infty \beta_y(a)y(t,a)da.
\end{equation}
Consequently, we obtain:
\begin{equation}\label{Eq:L0_Termfinal}
\dfrac{\partial L_0((\Phi_t(v)))}{\partial t}=-\dfrac{(\Lambda-\mu_S S(t))^2}{\mu_S S(t)}+\left(\dfrac{R_0^x-1}{r_x}\right)\int_0^\infty \beta_x(a)x(t,a)da+\left(\dfrac{R_0^y-1}{r_x}\right)\int_0^\infty \beta_y(a)y(t,a)da\leq 0
\end{equation}
for any $t\geq 0$. Consequently, $L_0$ is a Lyapunov function on $\R^*\times L^1_+(0,\infty)\times L^1_+(0,\infty)$ whenever $\max\{R_0^x, R_0^y\}\leq 1$. \\
\textbf{(b)} Suppose that $R_0^x\leq 1$ and let $z\in(\R^*\times L^1_+(0,\infty)\times L^1_+(0,\infty))\cap \partial \S_y$. Then $L_0(\Phi_t(z))$ is well-defined for every $t\geq 0$, from Proposition \ref{Prop:Lyapunov_Defined} (1), and is continuous. Since $\partial \S_y$ is positively invariant by Proposition \ref{Prop:Attract} (1), it follows that
$$\int_0^\infty \beta_y(a)y(t,a)da=0$$
for any $t\geq 0$. Consequently, we deduce from \eqref{Eq:L0_Termfinal} that
\begin{equation*}
\dfrac{\partial L_0((\Phi_t(v)))}{\partial t}=-\dfrac{(\Lambda-\mu_S S(t))^2}{\mu_S S(t)}+\left(\dfrac{R_0^x-1}{r_x}\right)\int_0^\infty \beta_x(a)x(t,a)da\leq 0
\end{equation*}
for any $t\geq 0$, whence $L_0$ is a Lyapunov function on $(\R^*\times L^1_+(0,\infty)\times L^1_+(0,\infty))\cap \partial \S_y$ whenever $R_0^x\leq 1$. \\
\textbf{(c)} In the case $R_0^y\leq 1$, from \eqref{Eq:L0_Termfinal} and the fact that $\partial \S_x$ is positively invariant by Proposition \ref{Prop:Attract} (1), we deduce that
\begin{equation*}
\dfrac{\partial L_0((\Phi_t(v)))}{\partial t}=-\dfrac{(\Lambda-\mu_S S(t))^2}{\mu_S S(t)}+\left(\dfrac{R_0^y-1}{r_y}\right)\int_0^\infty \beta_y(a)y(t,a)da\leq 0
\end{equation*}
for any $t\geq 0$ and every $z\in(\R^*\times L^1_+(0,\infty)\times L^1_+(0,\infty))\cap \partial \S_x$, so $L_0$ is a Lyapunov function.
\item Suppose that $R_0^x>\max\{1,R_0^y\}$ and let $z\in \S_x$. Then $L_x$ is well-defined on $\omega(z)$ from Proposition \ref{Prop:Lyapunov_Defined} (1), and is clearly continuous. Let $v\in \omega(z)$, then
\begin{flalign*}
&\dfrac{\partial (L_x(\Phi_t(v)))}{\partial t}\\
=& \left(1-\dfrac{S^*_1}{S(t)}\right)S'(t)+\int_0^\infty \Psi_x(a)\left(1-\dfrac{x^*_1(a)}{x(t,a)}\right)\dfrac{\partial x(t,a)}{\partial t}da+\int_0^\infty \Psi_y(t,a)\dfrac{\partial y(t,a)}{\partial t}da.
\end{flalign*}
Now, we compute each term. The fact that 
$$\Lambda=\mu_S S^*_1+S^*_1\int_0^\infty \beta_x(a)x^*_1(a)da$$
leads to
\begin{flalign}
&\left(1-\dfrac{S^*_1}{S(t)}\right)S'(t) \nonumber \\
=&-\dfrac{\mu_S}{S(t)}(S(t)-S^*_1)^2+\left(1-\dfrac{S^*_1}{S(t)}\right)\left(S^*_1\int_0^\infty \beta_x(a)x^*_1(a)da-S(t)\int_0^\infty \beta_x(a)x(t,a)da\right. \nonumber \\
&-\left.S(t)\int_0^\infty \beta_y(a)y(t,a)da\right). \label{Eq:Lyap_Term1}
\end{flalign}
Now, we compute the second term:
\begin{flalign*}
\int_0^\infty \Psi_x(a)\left(1-\dfrac{x^*_1(a)}{x(t,a)}\right)\dfrac{\partial x(t,a)}{\partial t}da=-\int_0^\infty \Psi_x(a)\left(1-\dfrac{x^*_1(a)}{x(t,a)}\right)\left(\dfrac{\partial x(t,a)}{\partial a}+\mu_x(a)x(t,a)\right)da.
\end{flalign*}
We remark that
$$\left(1-\dfrac{x^*_1}{x}\right)\left(\dfrac{\partial x}{\partial a}+\mu_x x\right)=x^*_1\dfrac{d}{da}g\left(\dfrac{x}{x^*_1}\right)$$
since $(x^*_1)'=-\mu_x x^*_1$. Thus, after an integration by parts we obtain:
\begin{flalign*}
\int_0^\infty \Psi_x(a)\left(1-\dfrac{x^*_1(a)}{x(t,a)}\right)\dfrac{\partial x(t,a)}{\partial t}da=\Psi_x(0)x^*_1(0)g\left(\dfrac{x(t,0)}{x^*_1(0)}\right)+\int_0^\infty (\Psi_x x^*_1)'(a)g\left(\dfrac{x(t,a)}{x^*_1(a)}\right)da.
\end{flalign*}
since $\Psi_x(\infty)=0$. Using \eqref{Eq:sigma_computations} and the fact that $x^*_1(0)=S^*_1\int_0^\infty \beta_x(a)x^*_1(a)da$ imply that
\begin{flalign}
&\int_0^\infty \Psi_x(a)\left(1-\dfrac{x^*_1(a)}{x(t,a)}\right)\dfrac{\partial x(t,a)}{\partial t}da \nonumber \\
=& S^*_1g \left(\dfrac{S(t)\int_0^\infty \beta_x(a)x(t,a)da}{S^*_1\int_0^\infty \beta_x(a)x^*_1(a)da}\right) \int_0^\infty \beta_x(a)x^*_1(a)da -S^*_1\int_0^\infty \beta_x(a) x^*_1(a) g\left(\dfrac{x(t,a)}{x^*_1(a)}\right)da \nonumber \\
=& S(t)\int_0^\infty \beta_x(a)x(t,a)da-S^*_1\ln\left(\dfrac{S(t)\int_0^\infty \beta_x(a)x(t,a)da}{S^*_1 \int_0^\infty \beta_x(a)x^*_1(a)da}\right)\int_0^\infty \beta_x(a)x^*_1(a)da \nonumber \\
&- S^*_1\int_0^\infty \beta_x(a)x^*_1(a)da-
S^*_1\int_0^\infty \beta_x(a)\left(x(t,a)-x^*_1(t,a)\ln\left(\dfrac{x(t,a)}{x^*_1(a)}\right)-x^*_1(a)\right)da \label{Eq:Lyap_Term2}
\end{flalign}
since $S^*_1=1/r_x$. After an integration by parts, we see that the third term reads as
\begin{equation}\label{Eq:Lyap_Term3}
\int_0^\infty \Psi_y(a)\dfrac{\partial y(t,a)}{\partial t}da= S(t)\int_0^\infty \beta_y(a)y(t,a)da-\dfrac{1}{r_y}\int_0^\infty \beta_y(a)y(t,a)da. 
\end{equation}
by using \eqref{Eq:sigma_y_comput}. Now, adding \eqref{Eq:Lyap_Term1} and \eqref{Eq:Lyap_Term2}, we see that:
\begin{flalign*}
&\left(1-\dfrac{S^*_1}{S(t)}\right)S'(t)+\int_0^\infty \Psi_x(a)\left(1-\dfrac{x^*_1(a)}{x(t,a)}\right)\dfrac{\partial x(t,a)}{\partial t}da \\
=&-\dfrac{\mu_S}{S(t)}(S(t)-S^*_1)^2-\dfrac{{S^*_1}^2}{S(t)}\int_0^\infty \beta_x(a)x^*_1(a)da+S^*_1\int_0^\infty \beta_x(a)x^*_1(t,a)\left(\ln\left(\dfrac{x(t,a)}{x^*_1(a)}\right)+1\right)da \\
&-S^*_1\ln\left(\dfrac{S\int_0^\infty \beta_x(a)x(t,a)da}{S^*_1 \int_0^\infty \beta_x(a)x^*_1(a)da}\right)\int_0^\infty \beta_x(a)x^*_1(a)da-\left(1-\dfrac{S^*_1}{S(t)}\right)S(t)\int_0^\infty \beta_y(a)y(t,a)da\\
=&-S^*_1\int_0^\infty \beta_x(a)x^*_1(a)\left[\left(\dfrac{S^*_1}{S(t)}-\ln\left(\dfrac{S^*_1}{S(t)}\right)-1\right)-\ln\left( \dfrac{x(t,a)\int_0^\infty \beta_x(s)x^*_1(s)ds}{x^*_1(a)\int_0^\infty \beta_x(s)x(t,s)ds} \right)\right]da \\
&-\left(1-\dfrac{S^*_1}{S(t)}\right)S(t)\int_0^\infty \beta_y(a)y(t,a)da-\dfrac{\mu_S}{S}(S-{S^*}_1)^2.
\end{flalign*}
We remark that
\begin{flalign*}
&\int_0^\infty \beta_x(a)x^*_1(a)\ln\left( \dfrac{x(t,a)\int_0^\infty \beta_x(s)x^*_1(s)ds}{x^*_1(a)\int_0^\infty \beta_x(s)x(t,s)ds} \right)da \\
=&-\int_0^\infty \beta_x(a)x^*_1(a)g\left(\dfrac{x(t,a)\int_0^\infty \beta_x(s)x^*_1(s)ds}{x^*_1(a)\int_0^\infty \beta_x(s)x(t,s)ds}\right)
\end{flalign*}
and we deduce that
\begin{flalign}
&\left(1-\dfrac{S^*_1}{S}\right)S'(t)+\int_0^\infty \Psi_x(a)\left(1-\dfrac{x^*_1(a)}{x(t,a)}\right)\dfrac{\partial x(t,a)da}{\partial t} \nonumber \\
=&-\dfrac{\mu_S}{S}(S-S^*_1)^2-S^*_1\int_0^\infty \beta_x(a)x^*_1(a)\left[g\left(\dfrac{S^*_1}{S(t)}\right)+g\left( \dfrac{x(t,a)\int_0^\infty \beta_x(s)x^*_1(s)ds}{x^*_1(a)\int_0^\infty \beta_x(s)x(t,s)ds} \right)\right]da \nonumber\\
&-\left(1-\dfrac{S^*_1}{S(t)}\right)S(t)\int_0^\infty \beta_y(a)y(t,a)da. \label{Eq:Lyap_Term4}
\end{flalign}
Now, adding \eqref{Eq:Lyap_Term3} and \eqref{Eq:Lyap_Term4}, and recalling that $S^*_1=1/r_x$, we obtain:
\begin{flalign}
&\dfrac{\partial }{\partial t}(L_x(\Phi_t(z))) \nonumber \\
=& -\dfrac{\mu_S}{S}(S-S^*_1)^2-S^*_1\int_0^\infty \beta_x(a)x^*_1(a)\left[g\left(\dfrac{S^*_1}{S(t)}\right)+g\left( \dfrac{x(t,a)\int_0^\infty \beta_x(s)x^*_1(s)ds}{x^*_1(a)\int_0^\infty \beta_x(s)x(t,s)ds} \right)\right]da \nonumber\\
&-\left(1-\dfrac{S^*_1}{S(t)}\right)S(t)\int_0^\infty \beta_y(a)y(t,a)da+S\int_0^\infty \beta_y(a)y(t,a)da-\dfrac{1}{r_y}\int_0^\infty \beta_y(a)y(t,a)da \nonumber  \\
=&-\dfrac{\mu_S}{S}(S-S^*_1)^2-S^*_1\int_0^\infty \beta_x(a)x^*_1(a)\left[g\left(\dfrac{S^*_1}{S(t)}\right)+g\left( \dfrac{x(t,a)\int_0^\infty \beta_x(s)x^*_1(s)ds}{x^*_1(a)\int_0^\infty \beta_x(s)x(t,s)ds} \right)\right]da \nonumber  \\
&-\int_0^\infty \beta_y(a)y(t,a)\left[\dfrac{1}{r_y}-\dfrac{1}{r_x}\right]da\leq 0 \label{Eq:Lyap_TermFinalX}
\end{flalign}
for any $t\geq 0$ since $g$ is a non-negative function and the fact that
$$R_0^x>R_0^y \Longleftrightarrow \dfrac{1}{r_x}<\dfrac{1}{r_y}.$$
Consequently $L_x$ is a Lyapunov function on $\omega(z)$ for every $z\in \S_x$ when $R_0^x>\max\{1,R_0^y\}$.

\item Suppose that $R_0^y>\max\{1,R_0^x\}$ and let $z\in \S_y$. Then $L_y$ is well-defined on $\omega(z)$ from Proposition \ref{Prop:Lyapunov_Defined} (2), and is clearly continuous. Let $v\in \omega(z)$. After similar computations as above, a differentiation of $L_y$ w.r.t. $t$ along \eqref{Eq:Model} gives:
\begin{flalign}
&\dfrac{\partial }{\partial t}(L_y(\Phi_t(z))) \nonumber \\
=&-\dfrac{\mu_S}{S}(S-S^*_2)^2-S^*_2\int_0^\infty \beta_y(a)y^*_2(a)\left[g\left(\dfrac{S^*_2}{S(t)}\right)+g\left( \dfrac{x(t,a)\int_0^\infty \beta_y(s)y^*_2(s)ds}{y^*_2(a)\int_0^\infty \beta_y(s)y(t,s)ds} \right)\right]da \nonumber  \\
&-\int_0^\infty \beta_x(a)x(t,a)\left[\dfrac{1}{r_x}-\dfrac{1}{r_y}\right]da\leq 0 \label{Eq:Lyap_TermFinalY}
\end{flalign}
for any $t\geq 0$. We deduce that $L_y$ is a Lyapunov function on $\omega(z)$ for every $z\in \S_y$ whenever $R_0^y>\max\{1,R_0^x\}$. 

\item Now, suppose that $R_0^x=R_0^y>1$ and let $z\in \S_x\cap \S_y$. We know by Proposition \ref{Prop:Attract} (8.a) that $\omega(z)\subset \S_x\cup \S_y$. If $\omega(z)\subset \S_x$, then using Proposition \ref{Prop:Lyapunov_Defined} (3), we know that the function $L_x$ is well-defined on $\omega(z)$ and is continuous. Let $v\in \omega(z)$. From \eqref{Eq:Lyap_TermFinalX} we see that
\begin{flalign}
&\dfrac{\partial }{\partial t}(L_x(\Phi_t(z))) \nonumber \\
=&-\dfrac{\mu_S}{S(t)}(S(t)-S^*_1)^2-S^*_1\int_0^\infty \beta_x(a)x^*_1(a)\left[g\left(\dfrac{S^*_1}{S(t)}\right)+g\left( \dfrac{x(t,a)\int_0^\infty \beta_x(s)x^*_1(s)ds}{x^*_1(a)\int_0^\infty \beta_x(s)x(t,s)ds} \right)\right]da\leq 0 \label{Eq:Lyap_TermFinalX=Y_1}
\end{flalign}
for any $t\geq 0$ since $R_0^x=R_0^y\Longleftrightarrow r_x=r_y$. Thus $L_x$ is a Lyapunov function on $\omega(z)$ for every $z\in \S_x\cap \S_y$ such that $\omega(z)\subset \S_x$. Similarly, if $\omega(z)\subset \S_y$, we know by Proposition \ref{Prop:Lyapunov_Defined} (3) that $L_y$ is well-defined on $\omega(z)$ and is continuous. Let $v\in \omega(z)$. From \eqref{Eq:Lyap_TermFinalY} we deduce that 
\begin{flalign}
&\dfrac{\partial }{\partial t}(L_y(\Phi_t(z))) \nonumber \\
=&-\dfrac{\mu_S}{S(t)}(S(t)-S^*_2)^2-S^*_2\int_0^\infty \beta_y(a)y^*_2(a)\left[g\left(\dfrac{S^*_2}{S(t)}\right)g\left( \dfrac{y(t,a)\int_0^\infty \beta_y(s)y^*_2(s)ds}{y^*_2(a)\int_0^\infty \beta_y(s)y(t,s)ds} \right)\right]da \leq 0 \label{Eq:Lyap_TermFinalX=y^*_2}
\end{flalign}
for any $t\geq 0$. Thus $L_y$ is a Lyapunov function on $\omega(z)$ for every $z\in \S_x\cap \S_y$ such that $\omega(z)\subset \S_y$. 
\end{enumerate}
\end{proof}

\subsection{Attractiveness}

Using the Lyapunov functionals defined above, we can compute the basin of attraction of each equilibrium, by means of the Lasalle invariance principle (see \textit{e.g.} \cite[Corollary 2.3]{MiSmiThi95}).
\begin{theorem}\label{Thm:Attract}
The following hold:
\begin{enumerate}
\item if $\max\{R^x_0 , R^y_0 \}\leq 1$ then $E_0$ is globally attractive in $\X_+$;
\item if $R_0^x>\max\{1, R_0^y\}$ then $E_1$ is globally attractive in $\S_x$;
\item if $R_0^y>\max\{1, R_0^x\}$ then $E_2$ is globally attractive in $\S_y$;
\item if $R^x_0 =R^y_0 >1$, then $\{E^*_\alpha, \alpha\in[1,2]\}$ is globally attractive in $\S_x\cap \S_y$.
\end{enumerate}
\end{theorem}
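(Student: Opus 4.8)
The plan is to apply the LaSalle invariance principle (\cite[Corollary 2.3]{MiSmiThi95}) to the Lyapunov functionals of Proposition \ref{Prop:Lyapunov}. Orbits are relatively compact (Section \ref{Sec:Attractor}) and each relevant functional $L$ is continuous and non-increasing along the flow, so LaSalle guarantees that $\omega(z)$ coincides with the largest invariant subset $M$ of $\{v:\left.\frac{d}{dt}L(\Phi_t(v))\right|_{t=0}=0\}$. The whole proof then amounts to identifying $M$ with the claimed equilibrium (or arc of equilibria) by reading off the algebraic constraints imposed by the vanishing of the (signed) derivative and then upgrading them using the invariance of $M$.

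For item 1, since $\max\{R_0^x,R_0^y\}\le1$ Proposition \ref{Prop:Attract} (7) already yields $\|\Phi_t^x(z)\|_{L^1}\to0$ and $\|\Phi_t^y(z)\|_{L^1}\to0$, so $x\equiv y\equiv0$ on $\omega(z)$. On that set \eqref{Eq:L0_Termfinal} collapses to $\frac{d}{dt}L_0(\Phi_t(v))=-(\Lambda-\mu_S S)^2/(\mu_S S)$, which vanishes only at $S=S^*_0$; hence $M=\{E_0\}$ and $E_0$ is globally attractive in $\X_+$. For items 2 and 3 I would use $L_x$ (resp. $L_y$) on $\omega(z)$, which is licit by Propositions \ref{Prop:Lyapunov_Defined} and \ref{Prop:Lyapunov}. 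Equating the right-hand side of \eqref{Eq:Lyap_TermFinalX} to zero forces each of its three non-positive terms to vanish: the first gives $S\equiv S^*_1$; vanishing of the $g$-integrand (together with $g(r)=0\Leftrightarrow r=1$) gives $x=c\,x^*_1$ for a constant $c$; and, since $R_0^x>R_0^y$ makes $1/r_y-1/r_x>0$, the last term forces $\int_0^\infty\beta_y(a)y(a)\,da=0$. Invariance of $M$ then turns these into pointwise identities along full orbits: the boundary condition $y(t,0)=S(t)\int_0^\infty\beta_y y\,da=0$ propagates $y\equiv0$, while $S\equiv S^*_1$ makes $S'=0$ in the $S$-equation, so that $\int_0^\infty\beta_x x\,da$ must equal $\int_0^\infty\beta_x x^*_1\,da=\mu_S(R_0^x-1)$; inserting $x=c\,x^*_1$ pins $c=1$, hence $M=\{E_1\}$. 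Item 3 follows verbatim from \eqref{Eq:Lyap_TermFinalY} after exchanging $x$ and $y$.

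Item 4 is the real difficulty. By Proposition \ref{Prop:Attract} (8.a), and the dichotomy established in the proof of Proposition \ref{Prop:Lyapunov_Defined} (4), either $\omega(z)\subset\S_x$ or $\omega(z)\subset\S_y$; assume the former and use $L_x$. The key change from item 2 is that now $r_x=r_y$, so the $y$-contribution in \eqref{Eq:Lyap_TermFinalX} cancels and the reduced identity \eqref{Eq:Lyap_TermFinalX=Y_1} only constrains $S$ and $x$: on $M$ one recovers $S\equiv S^*$ and $x=c\,x^*_1$ with $c$ constant, but the derivative says nothing about $y$. The main obstacle is thus to recover the $y$-profile from invariance alone. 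Here I would exploit that $S\equiv S^*=1/r_y$ renders the $y$-subsystem exactly critical: from $S'=0$ one gets $\int_0^\infty\beta_y y\,da=(1-c)\mu_S(R_0^x-1)$ constant, and feeding this constant influx through the renewal boundary condition along full orbits forces the stationary profile $y=d\,\pi_y$ with $d\,r_y=(1-c)\mu_S(R_0^x-1)$. Setting $c=2-\alpha$ then matches $(x,y)$ exactly to $(x^*_\alpha,y^*_\alpha)$, so $M=\{E^*_\alpha:\alpha\in[1,2]\}$; the case $\omega(z)\subset\S_y$ is symmetric.
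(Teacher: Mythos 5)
Your proof is correct and follows essentially the same route as the paper: the LaSalle invariance principle applied to the functionals $L_0$, $L_x$, $L_y$ on the compact invariant $\omega$-limit sets, combined with the localisation results of Proposition \ref{Prop:Attract} (items (6)--(8)) to ensure the functionals are defined and monotone there. You in fact supply more detail than the paper in identifying the largest invariant set — in particular the recovery of the $y$-profile in item 4 from $S'=0$ and the renewal boundary condition along full orbits, which the paper leaves implicit — but the decomposition and the key lemmas are the same.
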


\begin{proof}\mbox{}
\begin{enumerate}
\item Suppose that $\max\{R_0^x, R_0^y\}\leq 1$ and let $z\in \X_+$. By Proposition \ref{Prop:Attract} (7.a) and (7.b) we have $\omega(z)\subset \partial \S_x\cap \partial \S_y$ and $\lim_{t\to \infty}\|(\Phi_t^x,\Phi_t^y)\|_{L^1(0,\infty)\times L^1(0,\infty)}=0$. Using   \eqref{Eq:Model} we deduce that $E_0$ is globally attractive in $\X_+$.

\item Suppose that $R_0^x>\max\{1,R_0^y\}$. We use the Lasalle invariance principle to prove the global attractiveness of $E_1$ in $\S_x$. Let $z\in \S_x$. From Proposition \ref{Prop:Attract}  (6.a) and (8.b) we deduce that $\omega(z)\subset \S_x$. Consequently to Proposition \ref{Prop:Lyapunov_Defined} (2), for every $v\in \omega(z)$, the function $t\longmapsto L_x(\Phi_t(v))$ is constant. A differentiation w.r.t. $t$ implies that
$$\dfrac{d}{dt}L_x(\Phi_t(v))=0$$
for any $t\geq 0$. From \eqref{Eq:Lyap_TermFinalX}, we deduce that $\Phi_t^S(v)=S^*_1$ and $\Phi_t^y(v)=0$ for any $t\geq 0$. It follows from \eqref{Eq:Model} that $v=\{E_1\}$, whence $\omega(z)\subset \{E_1\}$ and $E_1$ is globally attractive in $\S_x$.

\item Suppose that $R_0^y>\max\{1,R_0^x\}$. Let $z\in \S_x$. We know by Proposition \ref{Prop:Attract} (5.a) and (8.c) that $\omega(z)\subset \S_y$ and by Proposition \ref{Prop:Lyapunov} (2), that $L_y$ is a Lyapunov function on $\omega(z)$. From \eqref{Eq:Model} and \eqref{Eq:Lyap_TermFinalY}, we deduce that $v=\{E_2\}$, so that $\omega(z)\subset \{E_2\}$.

\item Suppose that $R_0^x=R_0^y>1$. By Proposition \ref{Prop:Attract} (8.a) we know that $\omega(z)\subset \S_x\cup \S_y$. Suppose first that $\omega(z)\subset \S_x$. Using Proposition \ref{Prop:Lyapunov} (3), we know that $L_x$ is a Lyapunov function on $\omega(z)$. As above, the Lasalle invariance principle implies that $t\longmapsto L_x(\Phi_t(v))$ is constant for every $v\in \omega(z)$. Using \eqref{Eq:Lyap_TermFinalX=Y_1}, we obtain:
$$S(t)=S^*_1, \qquad \dfrac{x(t,a)\int_0^\infty \beta_x(s)x^*_1(s)ds}{x^*_1(a)\int_0^\infty \beta_x(s)x(t,s)ds}=1$$
for every $t\geq 0$ and every $a\geq 0$. We deduce that $v\in \{E^*_\alpha, \alpha \in[1,2]\}$, whence $\omega(z)\subset \{E^*_\alpha, \alpha\in[1,2]\}$.  Similarly, if $\omega(z)\subset \S_y$, then using the Lyapunov function $L_y$ on $\omega(z)$ and the Lasalle invariance principle, we know that $t\longmapsto L_y(\Phi_t(v))$ is constant for every $v\in \omega(z)$. Using \eqref{Eq:Lyap_TermFinalX=y^*_2}, we obtain:
$$S(t)=S^*_2=\dfrac{1}{r_y}=\dfrac{1}{r_x}=S^*_1, \qquad \dfrac{y(t,a)\int_0^\infty \beta_y(s)y^*_2(s)ds}{y^*_2(a)\int_0^\infty \beta_y(s)y(t,s)ds}=1$$
for every $t\geq 0$ and every $a\geq 0$. We deduce that $v\in \{E^*_\alpha, \alpha \in[1,2]\}$, whence $\omega(z)\subset \{E^*_\alpha, \alpha\in[1,2]\}$ and this latter set is globally attractive in $\S_x\cap \S_y$.
\end{enumerate}
\end{proof}

\begin{remark}
We can note that the first point could also be proved by using the Lyapunov functional $L_0$.
\end{remark}

\subsection{Lyapunov stability} \label{Sec:Lyap_Stab}

In this section, we handle the stability of $E_0$ in the cases where the principle of linearisation (Proposition \ref{Prop:Stability}) fails.

\begin{proposition}\label{Prop:Lyap_Stab_E0}There hold that:
\begin{enumerate}
\item if $\max\{R_0^x, R_0^y\}=1$, then $E_0$ is stable (in $\X_+$);
\item if $R_0^x\leq 1$, then $E_0$ is stable in $\partial  \S_y$;
\item if $R_0^y\leq 1$, then $E_0$ is stable in $\partial \S_x$.
\end{enumerate}
\end{proposition}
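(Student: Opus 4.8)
The plan is to establish Lyapunov stability of $E_0$ in the critical case $\max\{R_0^x,R_0^y\}=1$ by exploiting the Lyapunov function $L_0$ directly, rather than the linearisation (which fails, since it produces eigenvalues with zero real part). The key observation is that in Proposition \ref{Prop:Lyapunov} we already established that $L_0$ is a Lyapunov function whenever $\max\{R_0^x,R_0^y\}\le 1$, and in particular its derivative along the flow, given by \eqref{Eq:L0_Termfinal}, is non-positive. The obstacle to concluding stability from attractiveness alone is precisely the one flagged in the introduction: in infinite dimensions, a LaSalle-type argument yields attractiveness but not stability. So I would use $L_0$ to build a genuine stability estimate by bounding $\|\Phi_t(z)-E_0\|_\X$ above and below in terms of $L_0$.

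\begin{proof}[Proof sketch]\mbox{}

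First I would show that $L_0$ controls the norm from below near $E_0$. Writing $z=(S,x,y)$ and recalling $g(r)=r-\ln r -1\ge 0$ with a quadratic minimum at $r=1$, the susceptible term $S_0^*\,g(S/S_0^*)$ dominates $(S-S_0^*)^2$ up to a constant for $S$ in a neighbourhood of $S_0^*$, while the infected terms $\int_0^\infty \Psi_x(a)x(a)\,da$ and $\int_0^\infty \Psi_y(a)y(a)\,da$ are \emph{linear} in $x$ and $y$. Since $\Psi_x,\Psi_y$ are bounded below by a positive constant on the support of $\beta_x,\beta_y$ respectively but may vanish at large ages, these linear terms do not immediately bound the full $L^1$-norms $\|x\|_{L^1}$ and $\|y\|_{L^1}$. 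This is where I expect the main difficulty to lie, and I would resolve it by combining $L_0$ with the decay estimates already available: from \eqref{Eq:Phi_x1}--\eqref{Eq:Phi_y1} the ``ageing'' parts $\Phi_t^{x,1},\Phi_t^{y,1}$ decay like $e^{-\mu_0 t}$, so their contribution to the norm is controlled by the initial data uniformly, while the ``newly infected'' parts $\Phi_t^{x,2},\Phi_t^{y,2}$ are supported on $[0,t]$ and are governed by the boundary influx, which is in turn controlled by $\int_0^\infty \beta_x(a)x(t,a)\,da$ and $\int_0^\infty\beta_y(a)y(t,a)\,da$.

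The core argument is then the standard $\varepsilon$--$\eta$ estimate. Given $\varepsilon>0$, I would choose $\eta>0$ so small that $\|z-E_0\|_\X\le\eta$ forces $L_0(z)$ to be small (using continuity of $L_0$ at $E_0$ and the quadratic/linear upper bounds on $g$). Since $t\mapsto L_0(\Phi_t(z))$ is non-increasing by \eqref{Eq:L0_Termfinal}, we get $L_0(\Phi_t(z))\le L_0(z)$ for all $t\ge 0$. The lower bound on $L_0$ then yields that $|\Phi_t^S(z)-S_0^*|$ and the weighted infected integrals stay small for all $t$. To upgrade the weighted integrals to control of the full $L^1$-norms, I would note that in the critical case the non-positive derivative \eqref{Eq:L0_Termfinal} shows the flow cannot increase the total ``mass'' measured by $L_0$; coupling this with the explicit formulas \eqref{Eq:Phi_x1}--\eqref{Eq:Phi_y2} and the mass-balance computation $\frac{d}{dt}\int_0^\infty x(t,a)\,da=x(t,0)-\int_0^\infty\mu_x(a)x(t,a)\,da$ gives a uniform-in-time bound $\|\Phi_t^x(z)\|_{L^1}+\|\Phi_t^y(z)\|_{L^1}\le C\eta$ for a constant $C$ independent of $t$ and small $\eta$. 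Choosing $\eta$ accordingly makes $\|\Phi_t(z)-E_0\|_\X\le\varepsilon$ for all $t\ge 0$, which is exactly Lyapunov stability. This follows the strategy of \cite{Gabriel2012} and \cite{PerassoRichard19b}.

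For the remaining two items, where $R_0^x\le 1$ (respectively $R_0^y\le 1$) but restricted to the invariant set $\partial\S_y$ (respectively $\partial\S_x$), the argument simplifies. On $\partial\S_y$ Proposition \ref{Prop:Attract} (1) gives $\int_0^\infty\beta_y(a)y(t,a)\,da=0$ for all $t\ge0$, so by Proposition \ref{Prop:Lyapunov} (1b) the function $L_0$ is a Lyapunov function on $(\R^*\times L^1_+(0,\infty)\times L^1_+(0,\infty))\cap\partial\S_y$ with derivative reducing to $-\frac{(\Lambda-\mu_S S)^2}{\mu_S S}+\frac{R_0^x-1}{r_x}\int_0^\infty\beta_x(a)x(t,a)\,da\le 0$. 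I would then repeat the same $\varepsilon$--$\eta$ estimate, now noting that the $y$-component stays identically zero in the relevant transmission integral and decays in $L^1$ by \eqref{Eq:Phi-Decreas}, so the stability estimate goes through verbatim with the $x$-strain playing the critical role and $R_0^x\le 1$ ensuring monotonicity. The case $R_0^y\le1$ on $\partial\S_x$ is symmetric, exchanging the indices $x$ and $y$.
\end{proof}
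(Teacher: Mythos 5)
Your overall strategy is the same as the paper's: use the monotonicity of $t\mapsto L_0(\Phi_t(z))$ to make the sublevel sets $L_0^\eta$ positively invariant, show that small balls around $E_0$ sit inside small sublevel sets, and then convert membership of $\Phi_t(z)$ in a small sublevel set back into smallness of $\|\Phi_t(z)-E_0\|_{\X}$. You also correctly locate the one delicate point: $L_0$ only controls the weighted integrals $\int_0^\infty\Psi_x(a)x(a)\,da$ and $\int_0^\infty\Psi_y(a)y(a)\,da$, not the full $L^1$-norms. But precisely at that point your sketch has a gap. You propose to close it by a mass-balance argument, bounding $\frac{d}{dt}\int_0^\infty x(t,a)\,da$ through the boundary influx $x(t,0)=S(t)\int_0^\infty\beta_x(a)x(t,a)\,da$, which you say is ``controlled by $\int_0^\infty\beta_x(a)x(t,a)\,da$''. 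This is circular unless you can dominate $\int_0^\infty\beta_x(a)x(t,a)\,da$ by the quantity the sublevel set actually controls, namely $\int_0^\infty\Psi_x(a)x(t,a)\,da$; and that fails in general, because $\Psi_x(a)=\frac{1}{r_x}\int_a^\infty\beta_x(s)e^{-\int_a^s\mu_x(\xi)\,d\xi}\,ds$ tends to $0$ as $a\to\overline{\beta_x}^-$ whenever $\overline{\beta_x}<\infty$, while $\beta_x$ need not, so the ratio $\beta_x/\Psi_x$ is unbounded on the support of $\beta_x$. For the same reason your assertion that $\Psi_x,\Psi_y$ are bounded below by a positive constant on the supports of $\beta_x,\beta_y$ is false in general; they are only bounded below on compact sets $[0,c_\Psi]$ with $c_\Psi$ strictly inside the region where they are positive.

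The paper's proof avoids controlling the influx altogether. It fixes $c_\Psi>0$ with $\underline{\Psi}:=\min_{a\in[0,c_\Psi]}\{\Psi_x(a),\Psi_y(a)\}>0$ (possible since $\Psi_x(0)=\Psi_y(0)=1$), so that $\Phi_t(z)\in L_0^\eta$ gives $\int_0^{c_\Psi}\Phi_t^x(z)(a)\,da\le\eta/\underline{\Psi}$ at \emph{every} time $t\ge 0$ (Lemmas \ref{Lemma:Stab1} and \ref{Lemma:Stab2}). It then exploits the transport structure: the mass in the age window $[nc_\Psi,(n+1)c_\Psi]$ at time $t$ is at most $e^{-\mu_0 nc_\Psi}$ times the mass in $[0,c_\Psi]$ at time $t-nc_\Psi$, which is again at most $\ep$. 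Summing the resulting geometric series over $n$ and adding the exponentially decaying tail $\|x_0\|_{L^1}e^{-\mu_0 t}$ coming from the initial datum yields $\|\Phi_t^x(z)\|_{L^1}\le\ep/(1-e^{-\mu_0 c_\Psi})+\ep$ uniformly in $t$, and likewise for $y$. This age-windowing argument, which is the idea from \cite{Gabriel2012} and \cite{PerassoRichard19b} that you cite but do not implement, is the missing ingredient; without it (or an equivalent substitute) your $\ep$--$\eta$ scheme does not close. Your reduction of items (2) and (3) to the positively invariant sets $\partial\S_y$ and $\partial\S_x$, where the other transmission integral vanishes identically, is correct once the mechanism for item (1) is in place.
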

To prove this result, we need to define the following sets:
$$L_0^\eta:=\{z\in \R^*_+\times L^1_+(0,\infty)\times L^1_+(0,\infty): L_0(z)\leq \eta\}$$
and
$$B(E_0,\eta):=\{z\in \X_+:\|z-E_0\|_{\X}\leq \eta\}$$
for any $\eta>0$, and we give two lemmas (see \textit{e.g.} \cite[Proof of Theorem 1.2]{Gabriel2012} and \cite[Proposition 3.12]{PerassoRichard19b}).

\begin{lemma}\label{Lemma:Stab1}
For every $\ep>0$, there exists $\eta>0$ such that $B(E_0,\eta)\subset L^\ep_0$.
\end{lemma}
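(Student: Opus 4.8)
The plan is to show that $L_0$ vanishes at $E_0$ and is continuous there (once restricted to the region where the susceptible coordinate stays positive), so that every sublevel set $L_0^\ep$ contains a small ball around $E_0$. Concretely, given $\ep>0$ I will produce a suitable $\eta>0$ by controlling the three summands of $L_0$ separately.

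First I would record that $L_0(E_0)=0$: writing $E_0=(S_0^*,0,0)$ with $S_0^*=\Lambda/\mu_S$, the susceptible term equals $S_0^*g(1)=0$ since $g(1)=1-\ln 1-1=0$ by \eqref{Eq:Fct_g}, while the two integral terms vanish because the infected components are zero. Next, for any $z=(S,x,y)$ with $\|z-E_0\|_\X=|S-S_0^*|+\|x\|_{L^1}+\|y\|_{L^1}\le\eta$, I bound the infected part using $\Psi_x,\Psi_y\in L^\infty_+(0,\infty)$:
$$0\le \int_0^\infty \Psi_x(a)x(a)\,da+\int_0^\infty \Psi_y(a)y(a)\,da\le \left(\|\Psi_x\|_{L^\infty}+\|\Psi_y\|_{L^\infty}\right)\eta,$$
which is $\le \ep/2$ as soon as $\eta\le \ep\big/\big(2(\|\Psi_x\|_{L^\infty}+\|\Psi_y\|_{L^\infty})\big)$.

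It then remains to control the susceptible term $S_0^*g(S/S_0^*)$. Here I would first impose $\eta<S_0^*$, which forces $S\ge S_0^*-\eta>0$; this keeps $z$ in the domain $\R^*_+\times L^1_+(0,\infty)\times L^1_+(0,\infty)$ of $L_0$ and keeps the ratio $S/S_0^*$ inside a compact subinterval of $(0,\infty)$ bounded away from the origin. Since $g$ is continuous on $(0,\infty)$ with $g(1)=0$, the map $S\mapsto S_0^*g(S/S_0^*)$ is continuous and vanishes at $S=S_0^*$; hence there is $\eta'\in(0,S_0^*)$ such that $|S-S_0^*|\le\eta'$ implies $S_0^*g(S/S_0^*)\le \ep/2$. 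Taking $\eta:=\min\{\eta',\,\ep/(2(\|\Psi_x\|_{L^\infty}+\|\Psi_y\|_{L^\infty}))\}$ then gives $L_0(z)\le \ep/2+\ep/2=\ep$, i.e. $z\in L_0^\ep$, and therefore $B(E_0,\eta)\subset L_0^\ep$.

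The only delicate point is the logarithmic blow-up of $g$ as its argument tends to $0$: this is precisely what forces the restriction $\eta<S_0^*$, ensuring the susceptible coordinate never approaches the singularity and that the balls of $B(E_0,\eta)$ indeed sit inside the domain of $L_0$. Everything else reduces to the boundedness of $\Psi_x,\Psi_y$ and the continuity of $g$ at $1$, so no serious obstacle is expected.
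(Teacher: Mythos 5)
Your proof is correct and follows essentially the same route as the paper's: both arguments rest on the boundedness of $\Psi_x,\Psi_y$ to control the integral terms by $\eta$, the continuity of $g$ at $1$ with $g(1)=0$ to control the susceptible term, and the restriction $\eta<S^*_0$ to keep the ball inside the domain of $L_0$. Your version is if anything slightly more careful, since the explicit $\ep/2$ splitting makes the uniformity over the ball transparent where the paper phrases the same estimates as limits in $\eta$.
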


\begin{proof}
Let $\ep>0$, $\eta>0$ and $(S^\eta_0, x^\eta_0, y^\eta_0)\in B(E_0,\eta)$. We then have:
$$\left|S^\eta_0-S^*_0\right|\leq \eta, \qquad \|x^\eta_0\|_{L^1(0,\infty)}\leq \eta, \qquad \|y^\eta_0\|_{L^1(0,\infty)}\leq \eta$$
whence
$$\lim_{\eta \to 0}S^\eta_0=S^*_0, \quad \lim_{\eta \to 0}\|x^\eta_0\|_{L^1(0,\infty)}=0, \quad \lim_{\eta \to 0}\|y^\eta_0\|_{L^1(0,\infty)}=0.$$
Moreover, for $\eta>0$ small enough, we have $S^\eta_0>0$, so that $(S_0^\eta, x_0^\eta, y_0^\eta)\in \R^*_+\times L^1_+(0,\infty)\times L^1_+(0,\infty)$.
Consequently we get
$$\lim_{\eta \to 0}\dfrac{\Lambda}{\mu_S} g\left(\dfrac{S^\eta_0}{S^*_0}\right)=0,$$
$$\int_0^\infty \Psi_x(a)x^\eta_0(a)da\leq \|\Psi_x\|_{L^\infty(0,\infty)}\|x^\eta_0\|_{L^1(0,\infty)}\xrightarrow[\eta \to \infty]{}0$$
and
$$\int_0^\infty \Psi_y(a)x^\eta_0(a)da\leq \|\Psi_y\|_{L^\infty(0,\infty)}\|y^\eta_0\|_{L^1(0,\infty)}\xrightarrow[\eta \to \infty]{}0.$$
We deduce that for $\eta>0$ small enough, we have $L_0(S^\eta_0, x^\eta_0, y^\eta_0)\leq \ep$.
\end{proof}

Since $\Psi_x(0)=1$ and $\Psi_y(0)=1$, we can find $c_\psi>0$ such that
$$\underline{\Psi}:=\min_{a\in [0,c_\Psi]}\{\Psi_x(a), \Psi_y(a)\}>0.$$

\begin{lemma}\label{Lemma:Stab2}
For every $\ep>0$, there exists $\eta>0$ such that for any $(S_0, x_0, y_0)\in L^\eta_0$, we have 
\begin{equation}\label{Eq:Stab}
\left\|S_0-S^*_0\right\|\leq \ep, \quad \int_0^{c_\Psi} x_0(a)da\leq \ep, \quad \int_0^{c_\Psi} y_0(a)da\leq \ep.
\end{equation}
\end{lemma}

\begin{proof}
Let $\ep>0$, $\eta>0$ and $z:=(S^\eta_0, x^\eta_0, y^\eta_0)\in L^\eta_0$. We see that
$$S^*_0 g\left(\dfrac{S^\eta_0}{S^*_0}\right)\leq \eta, \quad \int_0^\infty \Psi_x(a)x^\ep_0(a)da\leq \eta, \quad \int_0^\infty \Psi_y(a)y^\ep_0(a)da\leq \eta.$$
We deduce that
$$\int_0^{c_\Psi}x^\eta_0(a)da\leq \dfrac{1}{\underline{\Psi}}\int_0^{c_\Psi}\Psi_x(a)x^\eta_0(a)da\leq \dfrac{\eta}{\underline{\Psi}}, \qquad \int_0^{c_\Psi}y^\eta_0(a)da\leq \dfrac{1}{\underline{\Psi}}\int_0^{c_\Psi}\Psi_y(a)y^\eta_0(a)da\leq \dfrac{\eta}{\underline{\Psi}}$$
and consequently we have
$$\lim_{\eta \to 0}\left|S^\ep_0-S^*_0\right|=0, \quad \lim_{\eta\to 0}\int_0^{c_\Psi}x^\eta_0(a)da=0, \quad \lim_{\eta\to 0}\int_0^{c_\Psi}y^\eta_0(a)da=0$$
which ends the proof. 
\end{proof}

\begin{proof}[Proof of Proposition \ref{Prop:Lyap_Stab_E0}]
Let $\delta>0$ and $\ep>0$. From Lemma \ref{Lemma:Stab2}, there exists $\eta>0$ such that for every $(S_0,x_0,y_0)\in L^\eta_0$, then \eqref{Eq:Stab} holds. Moreover, from Lemma \ref{Lemma:Stab1}, we know that there exists $\nu>0$ such that $B(E_0,\nu)\subset L^\eta_0$. We can suppose without loss of generality that $\nu\leq \ep$.

\begin{enumerate}

\item Suppose that $\max\{R_0^x, R_0^y\}=1$. Let $z:=(S_0,x_0,y_0)\in B(E_0,\nu)$. Then $z\in L^\eta_0$ and \eqref{Eq:Stab} holds. From Proposition \ref{Prop:Lyapunov_Defined} (1), we know that the function $t\longmapsto L_0(\Phi_t(z))$ is non-increasing. Consequently, the set $L^\eta_0$ is positively invariant and $\Phi_t(z)\in L^\eta_0$ for every $t\geq 0$. From Lemma \ref{Lemma:Stab2}, we obtain:
$$\left|\Phi_t^S(z)-S^*_0\right|\leq \ep, \quad \int_0^{c_\Psi}\Phi_t^x(z)(a)da\leq \ep, \quad \int_0^{c_\Psi}\Phi_t^y(z)(a)da\leq \ep, \qquad \forall t\geq 0.$$
Let $t\geq 0$. Using \eqref{Eq:Phi_x1} we get:
\begin{flalign*}
\|\Phi_t^x(z)\|_{L^1(0,\infty)}&=\int_0^t \Phi_t^x(z)(a)da+\int_t^\infty \Phi_t^x(z)(a)da \\
&\leq \sum_{n=0}^{N_t}\int_{n c_{\Psi}}^{(n+1)c_\Psi} \Phi_t^x(z)(a)da +\|x_0\|_{L^1(0,\infty)}e^{-\mu_0 t} \\
&\leq \sum_{n=0}^{N_t}\int_0^c \Phi_{t-n c_{\Psi}}^x(z)(a)e^{-\mu_0 n c_{\Psi}}da+\nu e^{-\mu_0 t} \\
&\leq \ep \sum_{n=0}^{N_t} (e^{-\mu_0 c_{\Psi}})^n+\ep e^{-\mu_0 t}\leq \dfrac{\ep}{1-e^{-\mu_0 c_\Psi}}+\ep
\end{flalign*}
where $N_t=[\frac{t}{c_\Psi}]$ is the integer part of $\frac{t}{c_\Psi}$. Likewise, we get 
$$\|\Phi_t^y(z)\|_{L^1(0,\infty)}\leq \dfrac{\ep}{1-e^{-\mu_0 c_\Psi}}+\ep.$$
It follows that
$$\|\Phi_t(z)-E_0\|_{\X}\leq 3\ep+\dfrac{2\ep}{1-e^{-\mu_0 c_\Psi}}$$
for every $t\geq 0$. Finally, considering $\ep>0$ such that
$$3\ep +\dfrac{2\ep}{1-e^{-\mu_0 c_\Psi}}\leq \delta$$
proves the stability of $E_0$.

\item Suppose that $R_0^x>1$. Let $z:=(S_0,x_0,y_0)\in B(E_0,\nu)\cap \partial \S_y$. The former arguments and the fact that the function $t\longmapsto L_0(\Phi_t(z))$ is non-increasing imply that $E_0$ is stable in $\partial \S_y$ whenever $R_0^x>1$.

\item It follows from the last point and interchanging the index $x$ and $y$.
\end{enumerate}
\end{proof}

While the stability of $E_0$ in the critical cases are handled in the latter proposition, the question of the stability of the set $\{E^*_\alpha, \alpha\in[1,2]\}$ when $R_0^x=R_0^y>1$ is open. The use of Lyapunov functional in the latter proof will raise some problems due to the fact that $L_x$ and $L_y$ are not defined in $\X_+$.

%

\subsection{Global asymptotic stability}

We are ready to give the main result of the paper:

\begin{theorem}\label{Thm:GAS}
The following hold:
\begin{enumerate}
\item $E_0$ is G.A.S. in $\partial \S_x\cap \partial \S_y$. Moreover, it is also G.A.S. in
\begin{enumerate}
\item $\X_+$ if $\max\{R^x_0 , R^y_0 \}\leq 1$;
\item $\partial \S_y$ if $R_0^x\leq 1$;
\item $\partial \S_x$ if $R_0^y\leq 1$.
\end{enumerate}
\item $E_1$ is G.A.S. in:
\begin{enumerate}
\item $\S_x$ if $R_0^x>\max\{1, R_0^y\}$;
\item $\S_x\cap \partial \S_y$ if $R^x_0 >1$;
\end{enumerate} 
\item $E_2$ is G.A.S. in:
\begin{enumerate}
\item $\S_y$ if $R_0^y>\max\{1, R_0^x\}$;
\item $\partial \S_x\cap \S_y$ if $R^y_0 >1$;
\end{enumerate} 
\item if $R^x_0 =R^y_0 >1$, then $\{E^*_\alpha, \alpha\in[1,2]\}$ is globally attractive in $\S_x\cap \S_y$.
\end{enumerate}
\end{theorem}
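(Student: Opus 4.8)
The plan is to exploit that global asymptotic stability is, by definition, the conjunction of Lyapunov stability and global attractiveness on one and the same invariant set. The global attractiveness part has already been established in Theorem~\ref{Thm:Attract} and Proposition~\ref{Prop:Attract}, so in each case it remains only to supply the missing stability and then assemble the two pieces. I would begin with the first assertion, $E_0$ on $\partial\S_x\cap\partial\S_y$: here Proposition~\ref{Prop:Attract}~(2) already gives global \emph{exponential} stability, and the bound $\|\Phi_t(z)-E_0\|_\X\le e^{-\mu_0 t}\|z-E_0\|_\X$ furnishes both stability (take $\eta=\varepsilon$) and global attractiveness at once, so nothing further is needed.

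For item~1(a) on $\X_+$ with $\max\{R_0^x,R_0^y\}\le 1$, global attractiveness is Theorem~\ref{Thm:Attract}~(1); the stability splits according to whether the maximum is $<1$, in which case Proposition~\ref{Prop:Stability}~(1) yields L.A.S.\ hence stability, or $=1$, the delicate critical case where linearisation fails and one invokes instead the Lyapunov stability of Proposition~\ref{Prop:Lyap_Stab_E0}~(1). Items~1(b) and~1(c) are symmetric: attractiveness comes from Proposition~\ref{Prop:Attract}~(6.b) (resp.\ (5.b)) and stability from Proposition~\ref{Prop:Lyap_Stab_E0}~(2) (resp.\ (3)). For $E_1$ and $E_2$ in the full basins, items~2(a) and~3(a), I would simply combine the global attractiveness of Theorem~\ref{Thm:Attract}~(2)--(3) with the local asymptotic stability (hence stability) of Proposition~\ref{Prop:Stability}~(2)--(3), which applies precisely under $R_0^x>\max\{1,R_0^y\}$ (resp.\ $R_0^y>\max\{1,R_0^x\}$).

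The genuinely different cases are~2(b) and~3(b), where $E_1$ (resp.\ $E_2$) may be globally unstable because the competing strain can dominate; the key observation is that on $\partial\S_y$ (resp.\ $\partial\S_x$), which is positively invariant by Proposition~\ref{Prop:Attract}~(1), the contamination integral of the absent strain vanishes for all $t\ge 0$, so the dynamics of $(S,x)$ (resp.\ $(S,y)$) reduce \emph{exactly} to the single-strain model~\eqref{Eq:Model_1pop}, while the remaining component decays like $e^{-\mu_0 t}$ by~\eqref{Eq:Phi-Decreas}. Applying Proposition~\ref{Prop:Magal}, which asserts global asymptotic stability of $E^*=E_1$ (resp.\ $E_2$) in the set $\S$ when $R_0^x>1$ (resp.\ $R_0^y>1$), then transfers both stability and attractiveness to $\S_x\cap\partial\S_y$ (resp.\ $\partial\S_x\cap\S_y$) for the full system, by controlling the reduced $(S,x)$ summand and the exponentially decaying summand separately in the $\X$-norm. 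Finally, item~4 is purely a restatement of the attractiveness already proved in Theorem~\ref{Thm:Attract}~(4), with no stability claim (indeed stability of that continuum is left open).

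The main obstacle is not the routine bookkeeping by which stability together with global attractiveness yields G.A.S., but rather locating the stability in the two non-generic situations. The first is the critical threshold $\max\{R_0^x,R_0^y\}=1$ for $E_0$, where the principle of linearisation produces a zero-real-part eigenvalue and one must rely on the Lyapunov-sublevel-set argument of Proposition~\ref{Prop:Lyap_Stab_E0}. The second is the boundary basins~2(b)/3(b), where stability in the full cone $\X_+$ actually fails and one can only argue stability \emph{within} the invariant boundary set; this is why the reduction to~\eqref{Eq:Model_1pop} and the use of Proposition~\ref{Prop:Magal} are essential rather than cosmetic, the stability part of that proposition being exactly what rules out escape along the retained strain.
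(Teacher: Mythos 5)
Your proposal is correct and takes essentially the same route as the paper: every case is obtained by pairing the attractiveness results of Theorem~\ref{Thm:Attract} and Proposition~\ref{Prop:Attract} with the appropriate stability statement (Proposition~\ref{Prop:Stability} in the hyperbolic cases, Proposition~\ref{Prop:Lyap_Stab_E0} in the critical case $\max\{R_0^x,R_0^y\}=1$, and, for items~2(b) and~3(b), the reduction on the positively invariant boundary set to the single-strain model~\eqref{Eq:Model_1pop} together with Proposition~\ref{Prop:Magal} and the exponential decay of the absent strain). This matches the paper's proof in both structure and the specific lemmas invoked.
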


\begin{proof}\mbox{}
\begin{enumerate}
\item The fact that $E_0$ is G.A.S. in $\partial \S_x\cap \partial \S_y$ follows from Proposition \ref{Prop:Attract} (2).  \\
\textbf{(a)} Suppose that $\max\{R_0^x, R_0^y\}\leq 1$. From Proposition \ref{Thm:Attract} (1), we know that $E_0$ is globally attractive in $\X_+$. Using Proposition \ref{Prop:Stability} (1) and Proposition \ref{Prop:Lyap_Stab_E0}, we deduce that $E_0$ is Lyapunov stable, whence the global asymptotic stability in $\X_+$. \\
\textbf{(b)} Suppose that $R_0^x\leq 1$. It follows from Proposition \ref{Prop:Attract} (6.b) that $E_0$ is globally attractive in $\partial \S_y$, and from Proposition \ref{Prop:Lyap_Stab_E0} that $E_0$ is stable in $\partial \S_y$. \\
\textbf{(c)} When $R_0^y\leq 1$, the result follows from Proposition \ref{Prop:Attract} (5.b) and Proposition \ref{Prop:Lyap_Stab_E0}.

\item \textbf{(a)} Suppose that $R_0^x>\max\{1,R_0^y\}$. The stability of $E_1$ follows from Proposition \ref{Prop:Stability} (2), while the global attractiveness in $\S_x$ comes from Theorem \ref{Thm:Attract} (2). \\
\textbf{(b)} Suppose that $R_0^x>1$. We know by Proposition \ref{Prop:Attract} (6.a) that $E_1$ is globally attractive in $\S_x\cap \partial \S_y$. Moreover, let $z:=(S_0,x_0,y_0)\in \S_x\cap \partial \S_y$ and denote by $(S,x,y)\in \Co(\R_+,\X_+)$ the solution of \eqref{Eq:Model}. Since $\partial \S_y$ is positively invariant by Proposition \ref{Prop:Attract} (1), it follows that
$$\int_0^\infty \beta_y(a)y(t,a)da=0$$
for any $t\geq 0$, so that $(S,x)$ satisfies \eqref{Eq:Model_1pop}. Let $\ep>0$. Since $(S^*_1,x^*_1)$ is Lyapunov table in $\{(S_0,x_0)\in \R_+\times L^1_+(\R_+): \int_0^{\overline{\beta_x}}x_0(s)s>0\}$ by Proposition \ref{Prop:Magal}, then we can find $\eta>0$ such that 
$$\|(S_0,x_0)-(S^*_1,x^*_1)\|_{\R\times L^1(0,\infty)}\leq \eta \Rightarrow \|(\Phi_t^S(z), \Phi_t^x(z))-(S^*_1,x^*_1)\|_{\R\times L^1(0,\infty)}\leq \dfrac{\ep}{2}.$$
We also know that $\|\Phi_t^y(z)\|_{L^1(0,\infty)}\leq e^{-\mu_0 t}\|x_0\|_{L^1(0,\infty)}$ for any $t\geq 0$ by using Proposition \ref{Prop:Attract} (6). Thus we consider $\tilde{\eta}:=\min\{\eta,\ep/2\}$ and we let $z\in \S_x\cap \partial \S_y$ such that $\|z-E_1\|_{\X}\leq \eta$. We then have
$$\|\Phi_t(z)-E_1\|_{\X}=\|(\Phi_t^S(z),\Phi_t^x(z))-(S^*_1,x^*_1)\|_{\R\times L^1(0,\infty)}+\|\Phi_t^y(z)\|_{L^1(0,\infty)}\leq \ep$$
which proves the Lyapunov stability of $E_1$ in $\partial \S_y$ and consequently the global stability.

\item \textbf{(a)} Suppose that $R_0^y>\max\{1,R_0^x\}$. From Proposition \ref{Prop:Stability} (3) and Theorem \ref{Thm:Attract} (3) we deduce that $E_2$ is G.A.S. in $\S_x$. \\
\textbf{(b)} Similarly, when $R_0^y>1$ the global stability is deduced from Proposition \ref{Prop:Attract} (5.a).

\item Suppose that $R_0^x=R_0^y>1$, then the result derives from Theorem \ref{Thm:Attract} (4).
\end{enumerate}
\end{proof}
We can note that the global stability of endemic equilibria implies the persistent of the corresponding disease. 

\section{Numerical simulations and final remarks} \label{Sec:Simu}

We start this section by some illustrations of the main results. We plot the total quantity of individuals, \textit{i.e.} the $L^1$-norm for $x$ and $y$, in function of time. We also consider two different initial conditions (in line and dotted line) in $\S_x\cap\S_y$. In Figures \ref{Fig:Case1} and \ref{Fig:Case2}, the competitive exclusion principle applies: the disease with the biggest $R_0$ value persists while the other one go extinct. In Figure \ref{Fig:Case3}, the two solutions (corresponding to both initial conditions), converge to two different equilibria belonging to the set $\{E^*_\alpha ,\alpha \in[1,2]\}$. We can note that the results obtained in the paper could be extended to the general case $(N\geq 3)$:
\begin{equation*}
\left\{
\begin{array}{rcl}
\dfrac{dS}{dt}(t)&=&\Lambda-\mu_S S(t)-S(t) \displaystyle \sum_{n=1}^N \int_0^\infty  \beta_{x_n}(a)x_n(t,a)da, \\
\dfrac{\partial x_n}{\partial t}(t,a)+\dfrac{\partial x_n}{\partial a}(t,a)&=&-\mu_{x_n}(a)x_n(t,a), \\
x_n(t,0)&=&S(t) \int_0^\infty \beta_{x_n}(a)x_n(t,a)da, \vspace{0.1cm} \\
(S(0), x_1(0,\cdot), \cdots, x_{N}(0,\cdot))&=&(S^0, x^0_1, \cdots, x^0_N)\in \R_+\times (L^1_+(0,\infty))^N
\end{array}
\right.
\end{equation*}
for every $n\in\llbracket 1,N\rrbracket$. As we noticed with \eqref{Eq:Model}, considering an initial condition in $\partial S_{x_n}$ for some $n\in\llbracket 1,N \rrbracket$ amounts to study the $N-1$ dimensional case. Therefore, even if the number of cases increase exponentially, only the set $\S_{x_1}\times \cdots \times \S_{x_N}$ is important for the initial conditions. In that situation, the competition exclusive principle applies whenever there exists $i\in\llbracket 1,N\rrbracket$ such that $R_0^{x_i}>R_0^{x,j}$ for every $j\in\llbracket 1,N\rrbracket \setminus \{i\}$, that is: the disease $x_i$ persists while all the other go extinct. When the maximum if not unique, we can prove the existence of an infinite number of equilibria, that constitute a global attractive set, whose stability is an open problem.

\begin{figure}[hbtp]
   \begin{minipage}[c]{.46 \linewidth}
      \begin{center}
\includegraphics[scale=0.19]{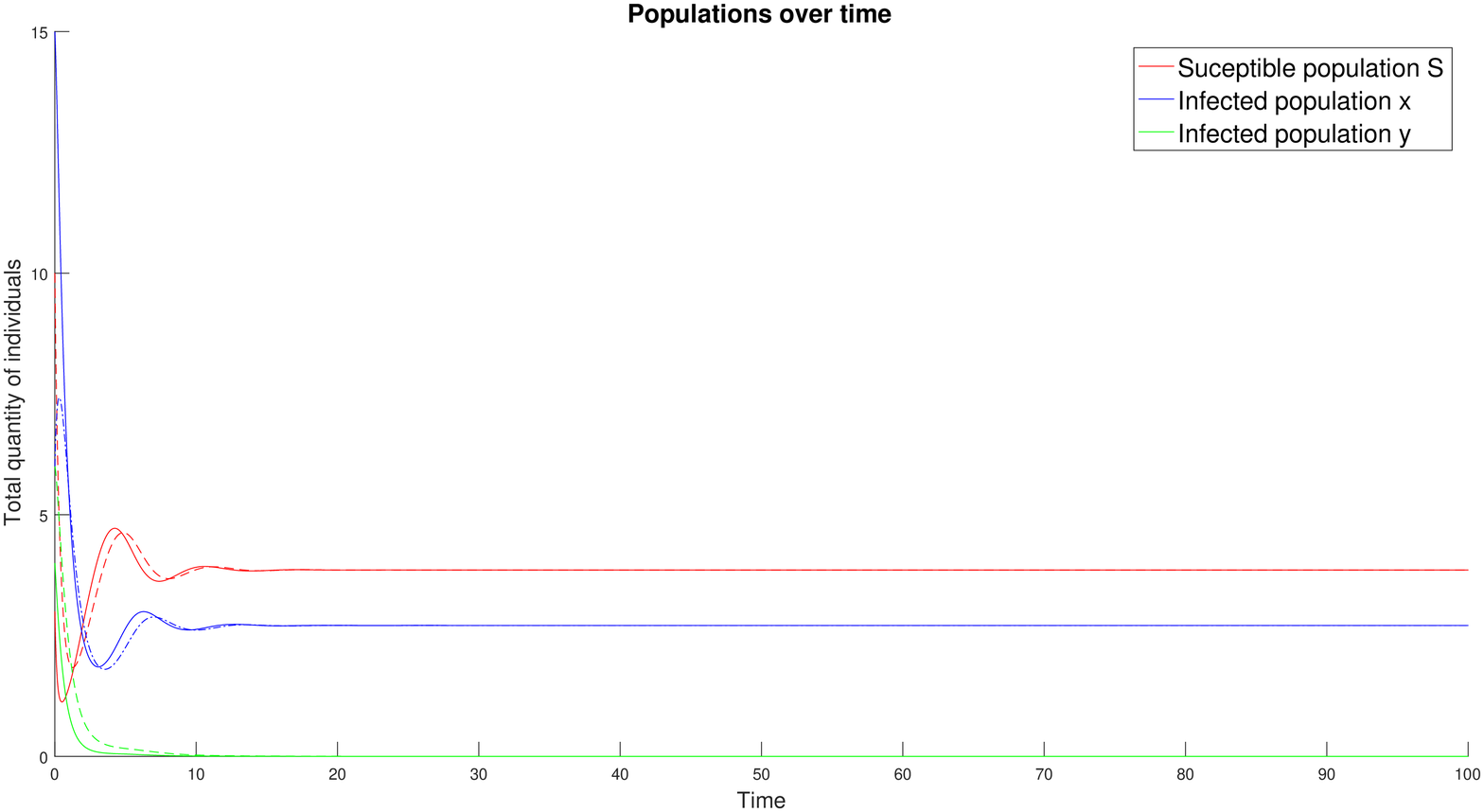}
\caption{Case $R_0^x>\max\{R_0^y,1\}$}
      \label{Fig:Case1}
      \end{center}
   \end{minipage} \hfill
   \begin{minipage}[c]{.46\linewidth}
      \begin{center}      
\includegraphics[scale=0.19]{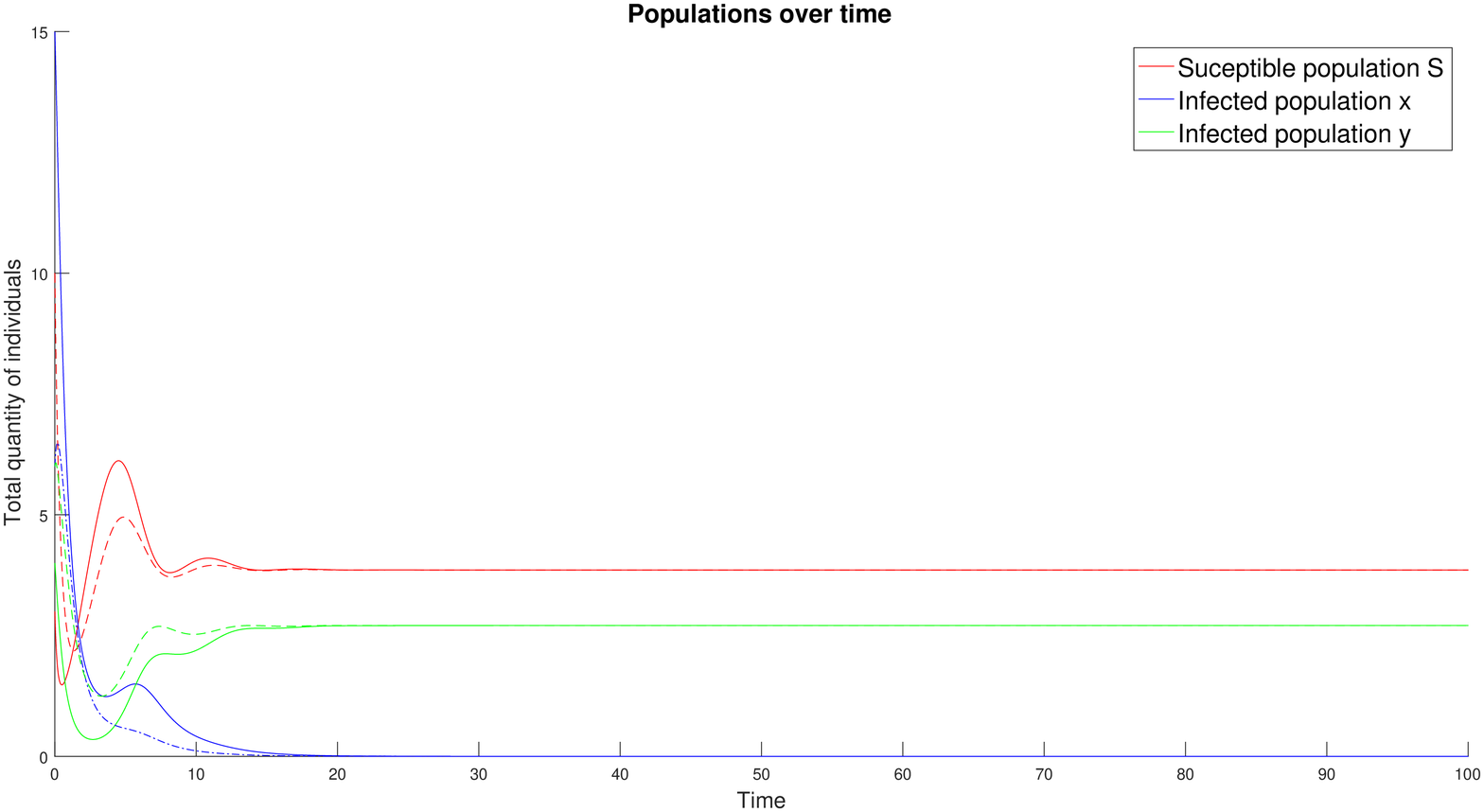}
\caption{Case $R_0^y>\max\{R_0^x,1\}$}
      \label{Fig:Case2}
      \end{center}
   \end{minipage}
\end{figure}

\begin{figure}
\begin{center}
\includegraphics[scale=0.2]{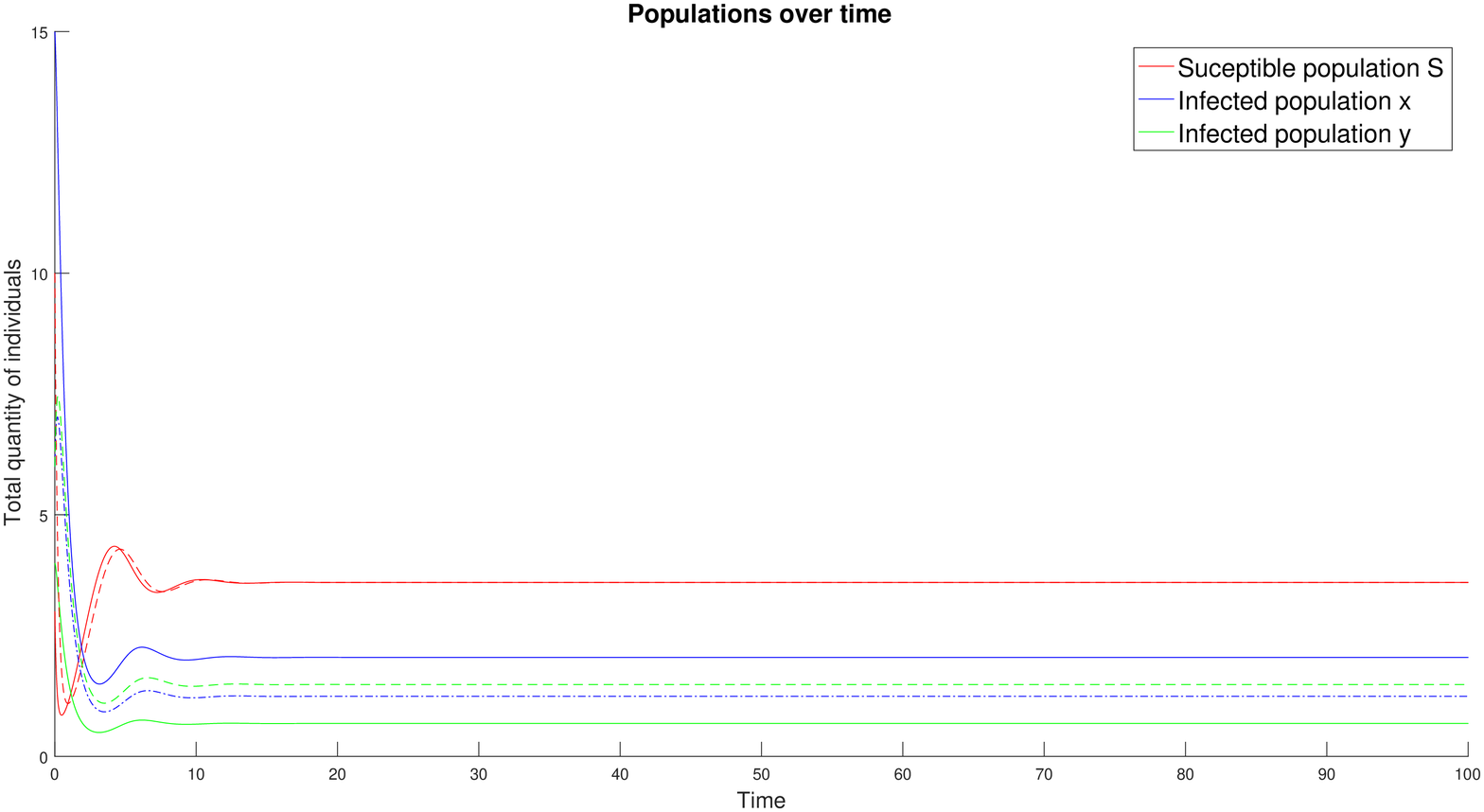}
\caption{Case $R_0^x=R_0^y>1$}
\label{Fig:Case3}
\end{center}
\end{figure}


\end{document}